\documentclass[11pt, a4paper]{amsart}

\usepackage{amsfonts}
\usepackage{amsmath}
\usepackage{amsthm}
\usepackage{amssymb}
\usepackage{pb-diagram}
\usepackage{centernot}
\usepackage{mathtools}
\usepackage{url}
\usepackage{stmaryrd}
\usepackage{hyperref}
\usepackage[capitalize]{cleveref}

\usepackage{mathrsfs}

\newtheorem*{thm*}{Theorem}
\newtheorem*{cor*}{Corollary}
\newtheorem*{prop*}{Proposition}
\newtheorem{theorem}{Theorem}[section]
\newtheorem{thm}[theorem]{Theorem}
\newtheorem{prop}[theorem]{Proposition}

\newtheorem{fact}[theorem]{Fact}
\newtheorem{cor}[theorem]{Corollary}

\newtheorem{lemma}[theorem]{Lemma}

\newtheorem*{mainconj}{Main Conjecture}

\newtheorem{claim}{Claim}
\newtheorem*{claim*}{Claim}

\theoremstyle{definition}
\newtheorem{defn}[theorem]{Definition}
\newtheorem{defin}[theorem]{Definition}
\newtheorem{definition}[theorem]{Definition}
\newtheorem{example}[theorem]{Example}

\theoremstyle{remark}
\newtheorem{rmk}[theorem]{Remark}
\newtheorem{remark}[theorem]{Remark}

\crefname{fact}{fact}{facts}
\Crefname{fact}{Fact}{Facts}
\Crefname{claim}{Claim}{Claims}
\Crefname{thm}{Theorem}{Theorems}


\newcommand{\acvf}{\ensuremath{\mathsf{ACVF}}}
\newcommand{\rcf}{\ensuremath{\mathsf{RCF}}}
\newcommand{\doag}{\ensuremath{\mathsf{DOAG}}}

 \DeclareMathOperator{\longpart}{Long}
 \DeclareMathOperator{\ind}{ind}

\newcommand{\la}{\langle}
\newcommand{\ra}{\rangle}

\newcommand{\sub}{\subseteq}

\newcommand{\es}{\ensuremath{\emptyset}}

\newcommand{\ldim}{\ensuremath{\textup{ldim}}}

\newcommand{\bb}[1]{\ensuremath{\mathbb{#1}}}

\newcommand{\cal}[1]{\ensuremath{\mathcal{#1}}}

\newcommand{\Lrarr}{\ensuremath{\Leftrightarrow}}
\newcommand{\Rarr}{\ensuremath{\Rightarrow}}

\newcommand{\res}{\ensuremath{\upharpoonright}}

\newcommand{\lam}{\ensuremath{\lambda}}

\newcommand{\Lam}{\ensuremath{\Lambda}}

\newcommand{\dom}{\ensuremath{\textup{dom}}}

\newcommand{\Aut}{\ensuremath{\textup{Aut}}}

\newcommand{\sm}{\setminus}

\newcommand{\dcl}{\operatorname{dcl}}

\newcommand{\scl}{\ensuremath{\operatorname{scl}}}

\DeclareMathOperator{\rk}{scl-rk}

\newcommand{\tp}{\mathrm{tp}}

\newcommand{\wort}{\mathrel{\perp^{\!\!\mathrm{w}}}}

\newcommand{\nwort}{\mathrel{{\centernot\perp}^{\!\!\mathrm{w}}}}

\newcommand{\proves}{\vdash}

\newcommand{\invtypes}{S^{\mathrm{inv}}}

\newcommand{\monster}{\mathfrak U}

\newcommand{\monsterbis}{\mathfrak V}

\newcommand{\bla}[4]{{#1}_{#2}#3\ldots#3{#1}_{#4}}

\newcommand{\domeq}{\mathrel{\sim_\mathrm{D}}}

\newcommand{\doms}{\mathrel{\ge_\mathrm{D}}}

\newcommand{\restr}{\upharpoonright}

\newcommand{\invext}{\mid}

\newcommand{\invtilde}{\operatorname{\widetilde{Inv}}(\monster)}

\newcommand{\invtildeof}[1]{\operatorname{\widetilde{Inv}}({#1})}

\newcommand{\deftilde}{\operatorname{\widetilde{Def}}(\monster)}

\newcommand{\into}{\hookrightarrow}

\DeclarePairedDelimiter{\abs}{\lvert}{\rvert}
\DeclarePairedDelimiter{\set}{\{}{\}}
\DeclarePairedDelimiter{\class}{\llbracket}{\rrbracket}

\usepackage{xcolor} 
\usepackage[normalem]{ulem} 


\newcommand{\deftypes}{S^{\mathrm{def}}}

\renewcommand{\phi}{\varphi}

\let\oldqed\qedsymbol
\newcommand{\qedclaim}{\mbox{$\underset{\textsc{claim}}{\square}$}}
\makeatletter
\newenvironment{claimproof}[1][\it Proof of Claim]{
\let\qedsymbol\qedclaim
  \par
  \pushQED{\qed}%
  \normalfont \topsep6\p@\@plus6\p@\relax
  \trivlist
\item[\hskip\labelsep
  \upshape
  #1\@addpunct{.}]\ignorespaces
}{%
  \popQED\endtrivlist\@endpefalse
}
\makeatother
\let\qedsymbol\oldqed

\title[Orthogonality and domination in o-minimal structures]{Orthogonality and domination in o-minimal expansions of ordered groups}
\subjclass[2020]{Primary: 03C64. Secondary: 03C45}
\keywords{orthogonality, domination monoid, semi-bounded structures, definable preorder, cofinal curve}

\author[P. And\'ujar Guerrero]{Pablo And\'ujar Guerrero}
\address{Pablo And\'ujar Guerrero, Facultad de ciencias matemáticas, Universitat de València, Burjassot 46100, Spain}
\email{pablo.andujar@uv.es}

\author[P.E. Eleftheriou] {Pantelis  E. Eleftheriou}
\address{Pantelis  E. Eleftheriou, School of Mathematics, University of Leeds, Leeds LS2 9JT, United Kingdom}
\email{P.Eleftheriou@leeds.ac.uk}

\author[R. Mennuni]{Rosario Mennuni}
\address{Rosario Mennuni, Dipartimento di Matematica, Università di Milano, Via Saldini 50, 20133 Milano, Italy}
\email{R.Mennuni@posteo.net}

\thanks{The first and second authors were partially supported by an EPSRC Early Career Fellowship (EP/V003291/1). RM was supported by the German Research Foundation (DFG) via HI 2004/1-1 (part of the French-German ANR-DFG project GeoMod) and under Germany’s Excellence Strategy EXC 2044-390685587, `Mathematics Münster: Dynamics-Geometry-Structure', by the projects PRIN 2017: ``Mathematical Logic: models, sets, computability'' Prot.~2017NWTM8RPRIN, PRIN 2022: ``Models, sets and classifications'' Prot.~2022TECZJA, and PRIN 2022 ``Logical methods in combinatorics'', 2022BXH4R5, Italian Ministry of University and Research. He acknowledges the MIUR Excellence Department Project awarded to the Department of Mathematics, University of Pisa, CUP I57G22000700001, and is a member of the INdAM research group GNSAGA}

\date{\today}

\begin{document}

\begin{abstract}
We analyse domination between invariant types in o-minimal expansions of ordered groups, showing that the domination poset decomposes as the direct product of two posets: the domination poset of an o-minimal expansion of a real closed field, and one derived from 
a linear o-minimal structure. We prove that if the Morley product is well-defined on the former poset, then the same holds for the  poset computed in the whole structure. We establish our results by employing the `short closure' pregeometry (\scl) in semi-bounded o-minimal structures, showing that types of \scl-independent tuples are weakly orthogonal to types of short tuples.  As an application we prove that, in an o-minimal expansion of an ordered group, every definable type is domination-equivalent to a product of $1$-types. Furthermore, there are precisely two or four classes of definable types up to domination-equivalence, depending on whether a global field is definable or not.
\end{abstract}

\maketitle
\section{Introduction}
\subsection{Overview}
The \emph{domination poset} $\invtilde$ of a saturated first-order structure $\monster=(U,\ldots)$ is an object encoding the relations holding between \emph{invariant types} up to small information.  It was first computed in the theory of algebraically closed valued fields (\acvf) in~\cite{hhm}, developed further in~\cite{invbartheory}, and has recently been studied in various contexts. If domination is compatible with the \emph{Morley product} $\otimes$, then $\invtilde$ has the structure of a partially ordered monoid, known as the \emph{domination monoid}.

A common theme has been to prove a decomposition theorem that enables the understanding of invariant types over $U$ in terms of simpler ones: for instance, invariant types in a simpler, stably embedded structure. For example, in the cases of \acvf{} and of real closed valued fields ($\mathsf{RCVF}$), it was shown that $\invtilde\cong\invtildeof{k(\monster)}\times \invtildeof{\operatorname{\Gamma}(\monster)}$, where $k$ denotes the residue field and $\Gamma$ the value group (see~\cite{hhm,ehm,dominomin}). In other words, every invariant type in these contexts is domination-equivalent to the product of an invariant type in the residue field with one in the value group. Moreover, every invariant type is domination-equivalent to a Morley product of $1$-types. Similar but more involved results hold for larger classes of Henselian valued fields. This was proven in~\cite{hilsDominationMonoidHenselian2024}, where it was also shown that the monoid structure on $\invtilde$ transfers from those of the residue field and value group, when available.

Parallel to these works, the third author~\cite{dominomin} analysed $\invtilde$ in the o-minimal setting, proving it is a monoid, and obtaining a structure theorem (\Cref{fact:redto1tps} below), under the assumption that every invariant type is domination-equivalent to a product of $1$-types. He also showed that the latter property holds in the theory \rcf{} of real closed fields, in the theory \doag{} of divisible ordered abelian groups, and in o-minimal theories with no non-simple types in the sense of~\cite{mayer_vaughts_1988}. Conjecturally, this property holds in every o-minimal structure.
\begin{mainconj}
  For every o-minimal theory $T$ and monster model $\monster\models T$, every type over $U$ that is invariant over a small set is domination-equivalent to a product of $1$-types.
\end{mainconj}

Although a proof of the Main Conjecture in arbitrary o-minimal theories remains rather elusive and open,  in this paper we show that knowing it for o-minimal expansions of \rcf{} is enough to obtain it for o-minimal expansions of \doag. This reduction follows from our main theorem (\Cref{thm:main1}), a decomposition result for the domination poset of non-linear o-minimal expansions of ordered groups that do not define a global field. The main technical lemma in its proof (\Cref{lem:rfsbteorcf}) also yields  the Main Conjecture for o-minimal theories that are intermediate between \doag{} and \rcf{} (\Cref{co:RpureRCF}). At the same time,  we prove the Main Conjecture for linear o-minimal expansions of $\mathsf{DOAG}$ (\Cref{lineardec}).
 
We achieve \Cref{thm:main1}  by employing a fine machinery developed for \emph{semi-bounded} o-minimal structures by the second author in \cite{el-sbdgroups}. These structures were first extensively studied in the 90s by several authors~\cite{betgrrng, mpp, pet-reals}, they relate to Zilber's dichotomy principle on definable groups and fields, and have been developed further in~\cite{ed1, pet-sbd, el-sbdgroups}. Examples include the real ordered group expanded by multiplication restricted to a bounded interval.  Statements known for the two extreme cases, pure group and full field structure, appear to be particularly hard to prove in the semi-bounded case. See, for example, the analysis of definable groups in~\cite{elp-sel2} in this setting. In the introduction of that paper, the authors alluded to a possible orthogonality between certain sets, or types, that are very closely related to the two extreme structures. In this paper, we formalise this intuition by decomposing invariant types into \emph{orthogonal} `long' and `short' parts. We then use this orthogonality to prove the aforementioned decomposition theorem for $\invtilde$ (\Cref{thm:main1}).

While the above results hold at the level of invariant types, we are able to prove the Main Conjecture for the subclass of \emph{definable types} in o-minimal expansions of \doag, by combining the aforementioned decomposition with results of the first author, Thomas and Walsberg~\cite{guerrero_compactness, andujarguerreroDirectedSetsTopological2021} on o-minimal types and definability of cofinal curves for definable directed preorders. In fact, in \Cref{thm:deftildedoag}, we fully compute the submonoid $\deftilde$ of $\invtilde$ arising from definable types.

\subsection{Invariant types and domination monoids}
We now recall briefly some terminology that will allow us to state precisely our results. For more detailed definitions, we refer the reader to  \Cref{sec:domination}.

Let $T$ be a theory and $\monster=(U,\ldots)$ a monster model.
A \emph{global type}, that is, a type over $U$, is \emph{invariant} if it is fixed under the natural action of $\Aut(\monster/A)$ for some small $A$. A special case is that of definable types, studied in the o-minimal context in~\cite{marker_definable_1994}.

 A natural relation on invariant types is that of \emph{domination}: we say that $p(x)$ \emph{dominates} $q(y)$ if  $p(x)$ entails $q(y)$ modulo a  type $r(x,y)$ over a small set, and call $p$ and $q$ \emph{domination-equivalent} if they dominate each other (\Cref{dfn:domination}). The corresponding quotient $\invtilde$ of the space $\invtypes_{<\omega}(U)$ of invariant types in finitely many variables (simply denoted by  $\invtypes(U)$) is the \emph{domination poset}; its definition is implicit in~\cite{hhm},  its general theory was developed in~\cite{invbartheory}, and a version for measures was recently defined in~\cite{gy}.

The space $\invtypes(U)$ naturally carries a monoid operation $\otimes$. In certain cases (but not always~\cite{invbartheory}), this operation is compatible with domination, hence descends to $\invtilde$, equipping it with the structure of a partially ordered monoid, the
\emph{domination monoid}.  

Besides the results on valued fields already recalled above, computations of $\invtilde$ are available for regular ordered abelian groups and short exact sequences of abelian structures~\cite{hilsDominationMonoidHenselian2024}, certain expansions of dense-meet trees~\cite{mennuni_weakly_2022}, and more. In the o-minimal setting it was shown~\cite{dominomin} that, in every theory where the Main Conjecture holds, $\invtilde$ is a monoid, and may be identified with the upper semilattice $(\mathscr P_{<\omega}(X(\monster)), \cup, \supseteq)$ of finite subsets of the set $X(\monster)$ of classes of non-realised invariant $1$-types modulo definable bijection (\Cref{fact:redto1tps}).

\subsection{Semi-bounded o-minimal structures}
Recall that, for an o-minimal expansion $\cal{M}=\langle M, <, +, 0, \ldots\rangle$ of an ordered group, there are naturally three possibilities: $\cal{M}$ is either (a) linear, (b) semi-bounded (and non-linear), or (c) it expands a real closed field. Let us define the first two.

\begin{defn}
Let $\Lambda$ be the set of all $\es$-definable partial endomorphisms 
of $\langle M, <, +, 0\rangle$, and $\mathcal{B}$ the collection of all bounded definable sets. Then $\mathcal{M}$ is called \emph{linear} (\cite{lp}) if every definable set is already definable in $\langle M, <, +, 0, \{\lam\}_{\lam\in \Lambda}\rangle$, and it is called \emph{semi-bounded} (\cite{pet-reals,ed1}) if every definable set is already definable in $\langle M, <, +, 0, \{\lam\}_{\lam\in \Lambda}, \{B\}_{B\in \mathcal{B}}\rangle$.
\end{defn}

Obviously, if $\mathcal{M}$ is linear then it is semi-bounded. By \cite{pest-tri}, $\cal{M}$ is not linear if and only if there is a definable real closed field. By \cite{ed1}, $\cal{M}$ is not semi-bounded if and only if  $\cal M$ expands a real closed field, if and only if for any two open intervals there is a definable bijection between them, if and only if there is \emph{pole}: a definable bijection between a bounded interval and an unbounded one. An important example of a semi-bounded non-linear structure is the expansion of the ordered vector space $\la \bb R, <,+, 0, x\mapsto \lam x\ra_{\lam\in \bb R}$ by all bounded semialgebraic sets.

\subsection{Results} We are now ready to state the results of this paper. In \Cref{lineardec}, we prove the Main Conjecture for linear o-minimal expansions of ordered groups, and in \Cref{co:RpureRCF} for o-minimal structures between \doag{} and \rcf{}. The proof of the latter corollary uses a technical statement, \Cref{lem:rfsbteorcf}, that we also employ in the proof of our main result, which is the following.

\begin{thm*}[\ref{thm:main1}]
  Let $T$ be an o-minimal semi-bounded theory, and $\monster$ a monster model. Assume there is an interval $R$ whose induced structure $\mathcal R$ expands a real closed field.
  Then there is a set $\operatorname{Long}(\monster)$ of domination-equivalence classes of $1$-types inducing an isomorphism of posets
  \[
    \invtilde\cong \invtildeof{\mathcal R(\monster)}\times (\mathscr P_{<\omega}(\operatorname{Long}(\monster)), \supseteq).
  \]
      If $\invtildeof{\mathcal R(\monster)}$ is a monoid, then so is $\invtilde$ and, if $\mathscr P_{<\omega}(\longpart(\monster))$ is equipped with the binary operation $\cup$, then the above is an isomorphism of monoids.
  Moreover,  $\mathscr P_{<\omega}(\operatorname{Long}(\monster))$ is a quotient of the domination monoid of the linear reduct of $\monster$.
\end{thm*}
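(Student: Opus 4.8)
The plan is to split every invariant type into weakly orthogonal \emph{long} and \emph{short} parts, show that domination decouples along this splitting, and then identify the short part with $\mathcal R$ and the long part with a quotient of the linear reduct. First I would recall the short-closure pregeometry $\scl$ from \cite{el-sbdgroups}: an element is \emph{short} over a parameter set if it lies in a bounded definable set over that set, equivalently in its $\scl$-closure, and \emph{long} otherwise. Given $p(x)\in\invtypes(U)$ realised by $a$ in a bigger monster, I would use o-minimal cell decomposition together with the exchange property of $\scl$ to replace $a$, up to interdefinability over $U$, by a pair $(c,d)$ with $c$ an $\scl$-independent tuple over $U$ and $d\in\scl(U)$. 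The point needing care here is that the short coordinates, a priori only short over $Uc$, can be rearranged to be short over $U$ itself; I expect this to follow from the structure of semi-bounded definable sets (the definable field sits on a bounded interval, so the short closure of a tuple splits off from its long part). Set $p_{\mathrm L}:=\tp(c/U)$ and $p_{\mathrm S}:=\tp(d/U)$; since $cd$ is $U$-interdefinable with $a$, both are invariant over the base of $p$, with $p_{\mathrm S}$ realised in $\scl(U)$ and $p_{\mathrm L}$ the type of an $\scl$-independent tuple.

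The engine is \Cref{lem:rfsbteorcf}: the type over $U$ of an $\scl$-independent tuple is weakly orthogonal to the type over $U$ of any short tuple. Applied to $c$ and $d$ it gives $\tp(cd/U)=p_{\mathrm L}\otimes p_{\mathrm S}$, so $p\domeq p_{\mathrm L}\otimes p_{\mathrm S}$. I then need the assignment $[p]\mapsto([p_{\mathrm S}],[p_{\mathrm L}])$ to be well defined and order-reflecting. For well-definedness: any two long/short decompositions of $U$-interdefinable realisations have $\scl$-interdefinable long parts, and since the $\monster$-type over $U$ of an $\scl$-independent tuple is determined by its type in the linear reduct \cite{el-sbdgroups}, these long parts are domination-equivalent; the short parts are then interdefinable over $U$ together with the long part, and weak orthogonality lets one discharge the long parameters to conclude they too are domination-equivalent. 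For the order: since $p_{\mathrm L}$ and $p_{\mathrm S}$ are short resp.\ long and hence $p_{\mathrm L}\wort q_{\mathrm S}$, $q_{\mathrm L}\wort p_{\mathrm S}$ etc., domination in such a product is componentwise, i.e.\ $p\doms q$ iff $p_{\mathrm L}\doms q_{\mathrm L}$ and $p_{\mathrm S}\doms q_{\mathrm S}$ (one direction from projecting a domination witness after separating long and short variables using weak orthogonality, the other by taking a product of witnesses). This yields an isomorphism of posets $\invtilde\cong\invtilde^{\mathrm S}\times\invtilde^{\mathrm L}$, where $\invtilde^{\mathrm S}$, resp.\ $\invtilde^{\mathrm L}$, is the sub-poset of classes of short, resp.\ long, invariant types.

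Next I would identify the two factors. On the short side, every short invariant type is realised in $\scl(U)$ and, by semi-boundedness, the induced structure on short tuples is that of $\mathcal R$, with $\mathcal R(\monster)$ stably embedded, so $\invtilde^{\mathrm S}\cong\invtildeof{\mathcal R(\monster)}$. On the long side, $\scl$-independent tuples over $U$ carry the linear induced structure, so both $\invtilde^{\mathrm L}$ and $\doms$ between long types are computed in the linear reduct $\monster^{\mathrm{lin}}$; by \Cref{lineardec} the Main Conjecture holds in $\monster^{\mathrm{lin}}$, so by \Cref{fact:redto1tps} its domination monoid is $(\mathscr P_{<\omega}(X(\monster^{\mathrm{lin}})),\cup,\supseteq)$, and collapsing the short $1$-type classes to $\emptyset$ identifies $\invtilde^{\mathrm L}$ with $(\mathscr P_{<\omega}(\longpart(\monster)),\supseteq)$ — where $\longpart(\monster)$ is the set of classes of long invariant $1$-types — and exhibits it as a quotient of the domination monoid of $\monster^{\mathrm{lin}}$. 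Combining with the previous paragraph gives the stated poset isomorphism. For the monoid statement, assume $\otimes$ respects $\domeq$ on $\invtildeof{\mathcal R(\monster)}$; realising a Morley product $p\otimes q$ and decomposing both factors over $U$, weak orthogonality shows $p\otimes q$ is $U$-interdefinable with a tuple whose type is the product of the type of a (still $\scl$-independent) long tuple with that of a short tuple, whose long contribution has class $[p_{\mathrm L}]\cup[q_{\mathrm L}]$ by the semilattice structure and whose short contribution has class $[p_{\mathrm S}]\otimes[q_{\mathrm S}]$, well defined by hypothesis. Hence $\otimes$ descends to $\invtilde$, and the isomorphism becomes one of monoids once $\mathscr P_{<\omega}(\longpart(\monster))$ is given $\cup$.

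The main obstacle is \Cref{lem:rfsbteorcf} itself: proving that $\scl$-independent types are weakly orthogonal to short types, i.e.\ that adjoining a realisation of a short type to a long $\scl$-independent tuple creates no new definable relations across the two. This is where the fine analysis of definable functions between a ``field part'' and a ``linear part'' in semi-bounded structures from \cite{el-sbdgroups} is needed. A secondary difficulty, in the middle paragraphs, is making sure all the bases that occur stay suitable (models), so that weak orthogonality can be applied symmetrically and the long/short decomposition is genuinely canonical up to domination.
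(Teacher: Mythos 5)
Your proposal follows essentially the same route as the paper: decompose a realisation into an $\scl$-basis and a tuple from $R$ (the paper's \Cref{lem-splitRsclind}), use weak orthogonality of short types against $\scl$-independent types to decouple the two factors (note this is the content of \Cref{co:wort}/\Cref{prop-bcorth}, not of \Cref{lem:rfsbteorcf}, which is the resulting decomposition lemma that your first two paragraphs in effect reconstruct), reduce the long factor to the linear reduct via \Cref{lem-scllin} and \Cref{lineardec}, identify the short factor with $\invtildeof{\mathcal R(\monster)}$ by stable embeddedness, and transfer the monoid operation. The points your sketch leaves implicit — upgrading weak orthogonality to orthogonality of invariant extensions (\Cref{lemma:sclrkinvext}, \Cref{prop-sbd}), the cancellation step showing domination is componentwise, and handling overlapping $1$-type factors in $p\otimes q$ via idempotence and extremal witnesses — are exactly the technical devices the paper supplies, so the approach is correct in outline.
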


The main tool in proving the above theorem concerns the interaction of full \scl-rank tuples with short ones (\Cref{prop-bcorth}), and can be phrased in terms of types as follows. (Here, $\scl(A)=\dcl(AR)$, see \Cref{sec:semibdd}).

 \begin{prop*} [\Cref{co:wort}] 
   Let $T$ be an o-minimal semi-bounded theory, and assume there is a definable interval $R$ whose induced structure expands a real closed field.
   For every parameter set $A$, if $p\in S(A)$ concentrates on a cartesian power of $R$, and $q\in S(A)$ is the type of an $\scl$-independent tuple, then $p$ and $q$ are weakly orthogonal (\Cref{def:worth}). 
  \end{prop*}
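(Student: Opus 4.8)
The plan is to unwind the definition of weak orthogonality into a statement about the type $q$ of the $\scl$-independent tuple alone, translate the hypothesis on $p$ into the language of the short closure, and then run the structural argument of \Cref{prop-bcorth}. Recall that $p$ and $q$ are weakly orthogonal exactly when $p(x)\cup q(y)$ generates a complete type over $A$; by a routine back-and-forth this is equivalent to the following: for some (hence every) realisation $a\models p$, the type $q$ has a \emph{unique} extension to $S(Aa)$, i.e.\ $b\equiv_{Aa}b'$ whenever $b,b'\models q$. Since any two realisations of $p$ are $\Aut(\monster/A)$-conjugate and $\Aut(\monster/A)$ preserves $\scl$-independence, the choice of $a$ is immaterial, and the goal reduces to showing that $\tp(b/Aa)$ is the unique extension of $\tp(b/A)$ over $Aa$, for every $\scl$-independent tuple $b$ over $A$ and every $a\models p$.

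First I would use that $p$ concentrates on some cartesian power $R^m$. As $R\subseteq\scl(\es)$ by the defining equation $\scl(X)=\dcl(XR)$, any $a\models p$ lies in $R(\monster)^m\subseteq\scl(\es)\subseteq\scl(A)$, so $a$ is a \emph{short} tuple over $A$. It follows that $\dcl(Aa)\subseteq\dcl(\scl(A))=\scl(A)$ — every parameter definable from $Aa$ is short over $A$ — and that the short-closure operators over $A$ and over $Aa$ coincide, so $b$ is still $\scl$-independent over $Aa$. What remains is precisely the interaction between a full-$\scl$-rank tuple and a short tuple that is the content of \Cref{prop-bcorth}: applying it over the base $A$ gives $\tp(b/Aa)=\tp(b'/Aa)$ for all $b,b'\models q$, which is the unique-extension condition above; hence $p$ and $q$ are weakly orthogonal.

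The whole weight of the argument therefore sits in \Cref{prop-bcorth}, and that is where I expect the genuine difficulty to be — everything after the translation is bookkeeping. The obstacle is that, although $\dcl(Aa)\subseteq\scl(A)$, this can strictly contain $\dcl(A)$, so one must rule out that an $Aa$-definable (equivalently, $\scl(A)$-definable) set cuts $\tp(b/Aa)$ more finely than $\tp(b/A)$ already does. I would handle this through the structure theory for semi-bounded o-minimal expansions of ordered groups: by the semi-bounded cell-decomposition/cone normal form, any definable subset over a parameter set $C$, in the variables of $b$, is a finite Boolean combination of bounded $C$-definable sets and of sets defined by $\Lambda$-linear conditions $\sum_i\lambda_i(y_i)<t$ or $\sum_i\lambda_i(y_i)=t$ with $\lambda_i\in\Lambda$ and $t\in\dcl(C)$. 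An $\scl$-independent tuple $b$ avoids every bounded $C$-definable set, since these consist of short tuples, and $\scl$-independence moreover forces each non-trivial $\Lambda$-combination $\sum_i\lambda_i(b_i)$ to be non-short; the crux is then to show that the truth value of such a linear condition against a parameter that is short over $A$ is already pinned down by $\tp(b/A)$. This is where the deep input from \cite{el-sbdgroups} (and \cite{elp-sel2}) enters — the fact that the trace of the structure on long tuples over the short closure is linear — and the single hard point is upgrading comparisons of the long elements $\sum_i\lambda_i(b_i)$ with arbitrary short parameters to comparisons with parameters drawn from $\dcl(A)$.
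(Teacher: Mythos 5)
Your proposal is correct and follows the paper's route: \Cref{co:wort} is deduced directly from \Cref{prop-bcorth}, exactly as in the paper, and your preliminary reductions (that a tuple from a cartesian power of $R$ is short over $A$, and that weak orthogonality amounts to a unique-extension statement) are sound. The only cosmetic mismatch is that \Cref{prop-bcorth} fixes the $\scl$-independent tuple and varies the short one, so it literally yields uniqueness of the extension of $p$ over $Ab$ rather than of $q$ over $Aa$; these are interchangeable by the conjugacy argument you already invoke, and your closing sketch of how \Cref{prop-bcorth} itself might be proved is not needed for this statement, since that proposition precedes the corollary in the paper.
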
 

In  \Cref{thm:deftildeexprcf}, we prove that the Main Conjecture holds in o-minimal expansions of \rcf{} for definable types, and fully compute the corresponding submonoid $\deftilde$.  In combination  with \Cref{thm:main1}, this yields a proof of the Main Conjecture for definable types, together with a computation of $\deftilde$, in every o-minimal expansion of \doag.

\begin{thm*}[\ref{thm:deftildedoag}]
Let $\monster$ be a monster model of an o-minimal expansion of $\mathsf{DOAG}$. Then $\deftilde$ is  a monoid, and isomorphic to 
  \begin{itemize}
  \item $(\mathscr P(\set{0,1}), \cup, \supseteq)$ if $T$ is semi-bounded;
  \item $(\mathscr P(\set{0}), \cup, \supseteq)$ otherwise.
  \end{itemize}
\end{thm*}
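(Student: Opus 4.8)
The plan is to combine the general structure theory with the new decomposition theorem (\Cref{thm:main1}) and the still-to-be-proved \Cref{thm:deftildeexprcf} on definable types in o-minimal expansions of real closed fields. First I would recall that in an o-minimal expansion of $\mathsf{DOAG}$, by the trichotomy, the structure is either linear, semi-bounded and non-linear, or an expansion of a real closed field; in the last two cases the field case is subsumed by the semi-bounded case only when no global field is definable, so I would split into two cases according to whether a global real closed field is definable (``otherwise'') or not (``$T$ semi-bounded'', including linear).

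In the non-field (semi-bounded, possibly linear) case, I would invoke \Cref{thm:main1}: there is a definable interval $R$ whose induced structure $\mathcal R$ expands a real closed field, and $\invtilde \cong \invtildeof{\mathcal R(\monster)} \times (\mathscr P_{<\omega}(\longpart(\monster)), \supseteq)$, with the monoid structure transferring from $\invtildeof{\mathcal R(\monster)}$ when the latter is a monoid. I would then restrict this isomorphism to definable types: a definable type over $U$ maps to a pair of definable types, one on a power of $R$ (hence a definable type in the pure-field-like structure $\mathcal R$) and one contributing a finite subset of $\longpart(\monster)$. By \Cref{thm:deftildeexprcf} applied to $\mathcal R(\monster)$, the definable part $\deftilde[\mathcal R(\monster)]$ is the two-element monoid $(\mathscr P(\set 0), \cup, \supseteq)$, i.e.\ every definable type in $\mathcal R$ is domination-equivalent either to a realised type or to a single definable $1$-type, and these two classes are distinct. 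On the ``long'' side, I would argue that among the $1$-types in $\longpart(\monster)$, exactly one domination class is represented by a \emph{definable} type — namely the class of a long definable $1$-type (e.g.\ the type at $+\infty$, or more precisely a definable type concentrating on the part not in $\scl(\es)$) — using that definability of an invariant $1$-type is detected already at the level of the linear reduct, and that in $\mathsf{DOAG}$ there is essentially one definable long $1$-type up to definable bijection. Hence the contribution of the long coordinate to $\deftilde$ is $\mathscr P(\set 1)$, and multiplying the two pieces gives $\mathscr P(\set{0,1})$ with union as operation.

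In the remaining case, where $\monster$ expands (or interprets a global) real closed field, \Cref{thm:main1} does not apply directly, but the structure is an o-minimal expansion of $\mathsf{RCF}$, so \Cref{thm:deftildeexprcf} applies on the nose and gives $\deftilde \cong (\mathscr P(\set 0), \cup, \supseteq)$. I would note that this is consistent with reading the decomposition ``degenerately'': there are no long types, so the $\mathscr P_{<\omega}(\longpart)$ factor is trivial, leaving only the $\deftilde[\mathcal R]$ factor.

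The main obstacle I expect is the ``long'' bookkeeping in the semi-bounded case: checking that exactly one nontrivial domination class among the long $1$-types is definable, and that it is not domination-equivalent to the short definable class. Concretely, one must verify (i) that a long definable $1$-type exists and is not domination-equivalent to a realised type nor to a short definable type — this should follow from weak orthogonality of long and short types (\Cref{co:wort}, \Cref{prop-bcorth}) together with the fact that a product of two non-realised types strictly dominates each factor; and (ii) that every definable long $1$-type falls into this single class — here I would use that $\longpart(\monster)$ is a quotient of the linear reduct's domination monoid, that in $\mathsf{DOAG}$ all non-realised definable $1$-types are interdefinable up to the action of $\Lambda$, and that the results of \cite{andujarguerreroDirectedSetsTopological2021,guerrero_compactness} on definable cofinal curves pin down the definable directed preorders finely enough to collapse all long definable types to one class. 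Once these are in place, the identification of the monoid operation with $\cup$ is immediate from the second half of \Cref{thm:main1}, since $\invtildeof{\mathcal R(\monster)}$ is a monoid by \Cref{thm:deftildeexprcf}.
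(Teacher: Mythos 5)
Your proposal is correct and follows essentially the same route as the paper: case split on whether $T$ expands $\mathsf{RCF}$ (handled by \Cref{thm:deftildeexprcf}) or is semi-bounded, and in the latter case combine the decomposition of \Cref{thm:main1} (via \Cref{lem:rfsbteorcf}) with \Cref{thm:deftildeexprcf} on $\mathcal R(\monster)$, the orthogonality of short and long types (\Cref{co:wort}), and the observation that the type at $+\infty$ is the unique definable $1$-type of $\scl$-rank $1$ (\Cref{rem:onlylongdeftp}), then conclude via \Cref{fact:redto1tps}. The only difference is cosmetic: where you invoke cofinal curves and the linear reduct to collapse the long definable classes, the paper simply notes directly that the sole long definable $1$-type is $p_{+\infty}$.
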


\subsection*{Structure of the paper.} In \Cref{sec:domination} we recall the necessary preliminaries on domination and prove the Main Conjecture in linear o-minimal theories, \Cref{lineardec}. \Cref{sec:cofcurvesdeftypes} deals with definable types in o-minimal expansions of \rcf, characterised up to domination-equivalence in \Cref{thm:deftildeexprcf}. We prove \Cref{co:wort} in \Cref{sec:semibdd} and, in our final \Cref{sec:mainresults}, we combine the results of the previous sections to prove \Cref{thm:main1,thm:deftildedoag}, and to prove the Main Conjecture for structures between \doag{} and \rcf{} (\Cref{co:RpureRCF}).

\subsection*{Acknowledgements.} We thank Y.~Peterzil for pointing out to us \Cref{fact-kobi} and \Cref{fn:kobi2}.

\section{Domination}\label{sec:domination}

In this section, we recall definitions and basic facts concerning orthogonality and domination.  We refer the reader to~\cite{invbartheory,dominomin,hilsDominationMonoidHenselian2024} for a more extensive treatment, as well as for proofs. 
We also prove the Main Conjecture for linear o-minimal expansions of ordered groups (\Cref{lineardec}).

\subsection{Orthogonality and domination}
Throughout the paper, we denote by $T$ a complete first-order theory, and by $\monster=(U,\ldots)$ a monster model, that is, a model that is $\kappa$-saturated and $\kappa$-strongly homogeneous for 
some fixed strong limit cardinal $\kappa=\kappa(\monster)> \abs T$. By \emph{small} we mean smaller than $\kappa$. For more details, see~\cite[Section~1.1]{dominomin}. We denote by $\monsterbis=(V,\ldots)$ a larger monster model, that is, an elementary extension $\monsterbis\succ \monster$ such that $U$ is small with respect to $\monsterbis$.  Parameter sets, denoted for example by $A$, are usually assumed to be small subsets of $U$. Parameter sets $B\supseteq U$ are assumed to be small subsets of $V$, and realisations of types over $U$ are assumed to be in $V$.

 From now on, except in \Cref{sec:semibdd} or if otherwise specified, `definable' means `definable with parameters from $U$'. On the contrary, if not otherwise specified, formulas and terms are assumed to be over $\emptyset$. The length of a tuple of variables $x$ is denoted by $\abs x$, and similarly for tuples of parameters; for these, we sometimes write e.g.\ $a\in U$ instead of $a\in U^{\abs a}$. When clear from context, $\abs\cdot$ will also be used to denote the absolute value function. 

We denote by $S_x(U)$ the space of types in the tuple of variables $x$ over $U$, by $S(U)$ the space of types over $U$ in any finite number of variables, and by $\invtypes(U)$ its subspace of invariant ones, that is, those fixed under the natural action of $\Aut(\monster/A)$ for some small $A$. Equivalently, $p(x)$ is $A$-invariant if, for every formula $\phi(x,y)$ over $\emptyset$, there is a set $d_p\phi\subseteq S_y(A)$ such that, for every $d\in U^{\abs y}$, we have $\phi(x,d)\in p(x)\iff \tp(d/A)\in d_p\phi$. If $p(x)$ is a type and $f(x)$  a definable function with domain a set on which $p$ concentrates, we write $f_*p$ for the pushforward of $p$ along $f$. Namely, if $p=\tp(a/A)$, then $f_*p=\tp(f(a)/A)$.

\begin{defin}\label{def:worth}
We say that two types $p(x),q(y)\in S(A)$ are \emph{weakly orthogonal}, and write $p\wort q$, if $p(x)\cup q(y)$ implies a complete type over $A$.
\end{defin}

\begin{definition} \label{dfn:domination} Let $p(x),q(y)\in \invtypes(U)$.
  \begin{enumerate}
  \item If $A\subseteq U$, we define $S_{pq}(A)$ to be the subspace of $S_{xy}(A)$ of types extending $(p(x)\cup q(y))\restr A$.
  \item  We say that $p$ \emph{dominates} $q$, and write $p\doms q$, if there are a small $A$ and $r\in S_{pq}(A)$ such that $p(x)\cup r(x,y)\proves q(y)$. We say that $r$ \emph{witnesses} that $p\doms q$.
  \item We say that $p$ and $q$ are \emph{domination-equivalent}, and write $p\domeq q$, if $p\doms q$ and $q\doms p$. The domination-equivalence class of $p$ is denoted by $\class p$.
  \item The \emph{domination poset} $(\invtilde, \doms)$ is the quotient $\invtypes(U)/\mathord{\domeq}$ together with the partial order induced by $\doms$.
  \end{enumerate}
\end{definition}
\begin{defin}\label{defin:invtensorth}Let $p,q\in \invtypes(U)$.
  \begin{enumerate}
  \item\label{point:definvext} If $B\supseteq U$, define $p\invext B$ as follows. Suppose $A$ is a small subset of $U$ and $p$ is $A$-invariant.  For $\phi(x,w)$ a formula over $\emptyset$ and $b\in B^{\abs w}$, let $\phi(x,b)\in p\invext B$ if for every/some $d\in U^{\abs b}$ with $d\equiv_A b$ we have $\phi(x,d)\in p$.
  \item Define $p(x)\otimes q(y)$ as follows. Fix $b\models q$. Then, for every $\phi(x,y)\in \mathcal L(U)$, set $\phi(x,y)\in p\otimes q$ if $\phi(x,b)\in p\invext U b$.
  \item We call $p$ and $q$ \emph{orthogonal}, and write $p\perp q$, if for every $B\supseteq U$ we have $(p\invext B)\wort (q\invext B)$. 
  \end{enumerate}
\end{defin}
We do not know whether in an o-minimal theory weak orthogonality of invariant types implies orthogonality, although in general this is false~\cite[Example~2.2]{hilsDominationMonoidHenselian2024}. The fact below is a summary of the known properties concerning the interaction of $\otimes$, domination, and (weak) orthogonality.
\begin{fact}\label{fact-wortdomeq}Let $p_0, p_1,q\in\invtypes(U)$.The following hold. 
  \begin{enumerate}
  \item\label{point:fwd1} If $p_0\doms p_1$ then $p_0\otimes q\doms p_1\otimes q$.
  \item\label{point:fwd2} If $p_0\doms p_1$ and for $i<2$ we have $p_i\otimes q\domeq q\otimes p_i$, then $q\otimes p_0\doms q\otimes p_1$.
  \item\label{point:fwd3}\label{point:wortdown} If $p_0\doms p_1$ and $p_0\wort q$, then $p_1\wort q$. In particular, if $p_0\doms p_1$ and $p_0\wort p_1$ then $p_1$ is realised.
  \item\label{point:fwd4} If $p_0\doms p_1$ and $p_0\perp q$, then $p_1\perp q$.
  \item\label{point:fwd5} If $p\wort q$, then $p$ and $q$ commute, that is,\footnote{See also~\cite[Definition~1.13]{dominomin} and the discussion surrounding it.} $p(x)\otimes q(y)=q(y)\otimes p(x)$.
  \item\label{point:fwd6} If $p_0\wort q_0$, $p_0\domeq p_1$ and $q_0\domeq q_1$, then $p_0\otimes q_0\domeq p_1\otimes q_1$.
  \item\label{point:fwd7} If for $i<2$ we have $p_i\perp q$ then $p_0\otimes p_1 \perp q$.
  \end{enumerate}
\end{fact}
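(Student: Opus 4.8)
The plan is to establish the seven items essentially in the listed order, since several of the later ones build on the earlier. For \eqref{point:fwd1}, I would take $A$ small over which $p_0, p_1, q$ are all invariant and a witness $r(x,x')\in S_{p_0 p_1}(A)$ for $p_0\doms p_1$, then enlarge $A$ so that $r$ is defined suitably; the point is that $r$ together with the "obvious" type identifying the $q$-coordinate on both sides witnesses $p_0\otimes q\doms p_1\otimes q$. Concretely, one works over a realisation of $q\invext B$ for appropriate $B$ and uses that domination is preserved under the invariant extension $p\mapsto p\invext B$, together with the definition of $\otimes$ as such an extension evaluated at a realisation of the second type. Item \eqref{point:fwd2} follows from \eqref{point:fwd1} by symmetry: $p_0\doms p_1$ gives $p_0\otimes q\doms p_1\otimes q$ by \eqref{point:fwd1}, and conjugating by the commutativity hypotheses $p_i\otimes q\domeq q\otimes p_i$ transports this to $q\otimes p_0\doms q\otimes p_1$, using transitivity of $\doms$.

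For \eqref{point:fwd3}, suppose $p_0\doms p_1$ witnessed by $r(x,x')$ over small $A$, and $p_0\wort q$. To see $p_1\wort q$, take $b\models q$; since $(p_0\cup q)\restr A$ — or rather $p_0\cup q$ — implies a complete type, and $p_0\cup r \proves p_1$, one deduces that $p_1(x')\cup q(y)$ is complete by "pulling back" along $r$: any two realisations of $p_1\cup q$ can be lifted to realisations of $p_0\cup r\cup q$ which are $A$-conjugate by weak orthogonality of $p_0$ and $q$, and the conjugating automorphism descends. The "in particular" clause is the case $q = p_1$: if $p_1\wort p_1$ then $p_1\cup p_1$ (in variables $x, x'$) is complete, forcing $x = x'$ in any realisation, i.e.\ $p_1$ is realised; combined with $p_0\doms p_1$ one only needs $p_0\wort p_1$. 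Item \eqref{point:fwd4} is the same argument carried out uniformly over all $B\supseteq U$: $p_0\invext B \doms p_1\invext B$ (domination passes to invariant extensions, which is where I would invoke a lemma from \cite{invbartheory}), and then \eqref{point:fwd3} applied inside the larger model gives $(p_1\invext B)\wort(q\invext B)$.

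Items \eqref{point:fwd5}--\eqref{point:fwd7} are then quick. For \eqref{point:fwd5}: if $p\wort q$ then for $b\models q$ the type $p\invext Ub$ is the unique extension of $p$ to $Ub$, hence symmetric considerations show $\phi(x,y)\in p\otimes q \iff \phi(x,y)\in q\otimes p$; this is the standard fact that weakly orthogonal types commute. For \eqref{point:fwd6}: from $p_0\domeq p_1$ and $q_0\domeq q_1$ and repeated use of \eqref{point:fwd1}, plus \eqref{point:fwd5} to freely swap factors (weak orthogonality of $p_0, q_0$ transfers to the relevant pairs via \eqref{point:fwd3}), one chains $p_0\otimes q_0\domeq p_1\otimes q_0\domeq p_1\otimes q_1$. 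For \eqref{point:fwd7}: from $p_0\perp q$ and $p_1\perp q$, taking any $B\supseteq U$, one checks $(p_0\otimes p_1)\invext B\wort q\invext B$ by noting $(p_0\otimes p_1)\invext B = (p_0\invext B)\otimes(p_1\invext B)$ and that weak orthogonality to a fixed type is preserved under $\otimes$ — realise $p_1\invext B$ over $B$, get orthogonality of $p_0$ to $q$ over the enlarged base, then absorb $p_1$.

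The main obstacle is \eqref{point:fwd3} (and hence \eqref{point:fwd4}), specifically the bookkeeping of parameter sets: the witness $r$ lives over some small $A$, $p_0\wort q$ is a statement over the base, and one must be careful that the automorphism extracted from weak orthogonality of the lifted realisations genuinely fixes enough to conclude completeness of $p_1\cup q$, rather than merely of $p_1\cup q$ over $Ar$-type data. Everything else is formal manipulation of $\otimes$ and invariant extensions, for which the cited references supply the needed lemmas.
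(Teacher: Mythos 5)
The paper does not prove this fact at all: it is a catalogue of citations ((1) to \cite[Lemma~1.14]{invbartheory}, (3) to \cite[Proposition~3.13, Corollary~3.14]{invbartheory}, (4) and (7) to \cite{hilsDominationMonoidHenselian2024}, (5) to \cite{dominomin}, with (2) and (6) derived formally from the others). So you are reconstructing the arguments of the cited references, and for items (1), (2), (4), (5), (6) and (7) your outlines match those proofs and would go through; in (7) your phrase ``weak orthogonality to a fixed type is preserved under $\otimes$'' is false as a general principle (it needs distality, cf.\ \Cref{fact:wortotimes}), but the argument you actually run uses full orthogonality of both $p_i$ to $q$, which supplies weak orthogonality over every base $B a_1$, and that is the correct mechanism.

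The genuine gap is in (3), which you rightly identify as the main obstacle. Your mechanism is: lift two realisations of $p_1\cup q$ to realisations of $p_0\cup r\cup q$, conjugate by an automorphism supplied by $p_0\wort q$, and ``descend''. The lifting step is fine (given $c\models p_1$, take any $(a_0,c_0)\models p_0\cup r$ and a $U$-automorphism of the larger monster sending $c_0$ to $c$; since $p_0$ and $r$ live over $U\supseteq A$, the image gives $(a,c)\models p_0\cup r$). But the conjugation step does not close: weak orthogonality of $p_0$ and $q$ yields $\sigma$ fixing $U$ with $\sigma(a_1,b_1)=(a_2,b_2)$, and nothing forces $\sigma(c_1)=c_2$, nor even $\tp(\sigma(c_1)/Ub_2)=\tp(c_2/Ub_2)$, because $r$ is only a type over the small set $A$ and does not pin down $c$ over $U$ given $a$. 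The argument that works instead is: from $p_0\wort q$ deduce that $p_0\proves p_0\invext Ub$ for $b\models q$ (weak orthogonality says $p_0$ implies a complete type over $Ub$, which must be $p_0\invext Ub$); combine with the fact that $r$ still witnesses $p_0\invext Ub\doms p_1\invext Ub$ (preservation of domination under invariant extensions, which you already invoke for (4)) to get $p_0(x)\cup r(x,x')\cup q(y)\proves (p_1\otimes q)(x',y)$; then use the lifting to conclude that every realisation of $p_1\cup q$ realises $p_1\otimes q$, i.e.\ $p_1\cup q$ is complete. Your ``in particular'' clause is fine once one notes that $p_1\cup p_1$ is consistent with the diagonal $x=x'$, so completeness forces $p_1\cup p_1\proves x=x'$, which fails for a non-realised invariant type (realise $p_1\otimes p_1$).
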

\begin{proof}
  See~\cite[Lemma~1.14]{invbartheory} for~\ref{point:fwd1}, from which~\ref{point:fwd2} follows easily. See \cite[Proposition~3.13, Corollary~3.14]{invbartheory} for~\ref{point:fwd3}, \cite[Proposition~2.4]{hilsDominationMonoidHenselian2024} for~\ref{point:fwd4}, and \cite[Remark~1.15]{dominomin} for ~\ref{point:fwd5}. Applying the latter together with~\ref{point:fwd1}, \ref{point:fwd2} and~\ref{point:fwd3} yields~\ref{point:fwd6}, while~\ref{point:fwd7} is~\cite[Proposition~2.9]{hilsDominationMonoidHenselian2024}.
\end{proof}
\begin{definition}
  If for every $p_0,p_1,q\in\invtypes(U)$ we have $(p_0\doms p_1)\Longrightarrow (q\otimes p_0\doms q\otimes p_1)$, we say that $\otimes$ \emph{respects} $\doms$, or that $\otimes$ and $\doms$ are \emph{compatible}. In this case,  the expansion $(\invtilde, \doms, \otimes)$ of the domination poset by the operation induced by $\otimes$ is a monoid, which we call the \emph{domination monoid of $\monster$}. We abbreviate this fact by simply saying that \emph{$\invtilde$ is a monoid}.
\end{definition}

Recall that  $p(x)\in S(U)$ is \emph{definable} over $A$ if it is $A$-invariant and every $d_p\phi$ is clopen, equivalently, given by  a formula over $A$. We write $\deftypes(U)$ for the space of types over $U$ that are definable over some $A\subseteq U$.
\begin{fact}[{\cite[Theorem~3.5(2)]{invbartheory}}]\label{fact:domprdef}
If $p,q\in\invtypes(U)$, $p\doms q$, and $p$ is definable, then so is $q$.
\end{fact}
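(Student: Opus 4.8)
The plan is to reduce the statement to showing that, for every formula $\phi(y;z)$ over $\emptyset$, the set $d_q\phi=\{c\in U^{\abs z}:\phi(y,c)\in q\}$ is definable over a fixed small parameter set $A$; then $q$ is definable over $A$ by definition. First I would enlarge the small set coming from the domination witness so that $A$ is still small, $p$ is definable over $A$, $q$ is $A$-invariant, and there is $r\in S_{pq}(A)$ with $p(x)\cup r(x,y)\proves q(y)$. This is routine: one realises the original witness and takes its type over the larger set. I also record that $p(x)\cup r(x,y)$ is consistent: if $a\models p$ in $\monsterbis$ then $\tp(a/A)=p\restr A=r\restr_{x} A$, so some $b$ has $\tp(ab/A)=r$, and then $b\models q$ by the choice of $r$.

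The key point is the equivalence
\[
  \phi(y,c)\in q\quad\Longleftrightarrow\quad p(x)\cup r(x,y)\proves\phi(y,c).
\]
For ``$\Rightarrow$'', the entailment $p\cup r\proves q$ means that every realisation $(a,b)$ of $p(x)\cup r(x,y)$ has $b\models q$, hence satisfies $\phi(b,c)$; for ``$\Leftarrow$'', pick any such $(a,b)$ (one exists by consistency), so $\phi(b,c)$ holds with $b\models q$, giving $\phi(y,c)\in q$. Now apply compactness: the right-hand side holds iff there are $\pi(x)\in p$ and $\rho(x,y)\in r$ with $\monster\models\forall x\,\forall y\,\bigl(\pi(x)\wedge\rho(x,y)\to\phi(y,c)\bigr)$. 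Writing $\pi(x)=\pi_0(x,e)$ with $\pi_0$ over $\emptyset$ and $e\in U$, the condition $\pi_0(x,e)\in p$ is expressed by the formula over $A$ defining $d_p\pi_0$, say $\delta_{\pi_0}(w)$, because $p$ is definable over $A$. Therefore
\[
  d_q\phi=\bigcup_{\pi_0,\ \rho\in r}\ \bigl\{\,c:\monster\models\exists w\,\bigl(\delta_{\pi_0}(w)\wedge\forall x\,\forall y\,(\pi_0(x,w)\wedge\rho(x,y)\to\phi(y,c))\bigr)\bigr\},
\]
a union of $A$-definable sets, so $d_q\phi$ is $\bigvee$-definable over $A$.

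Finally, applying the same computation to $\neg\phi$ and using that $q$ is a complete type shows that the complement of $d_q\phi$ is also $\bigvee$-definable over $A$. A partition of $U^{\abs z}$ into two $\bigvee$-$A$-definable sets corresponds to a cover of the compact space $S_z(U)$ by two families of basic clopen sets; extracting a finite subcover forces each part to be a finite union of $A$-definable sets, hence $A$-definable. Thus every $d_q\phi$ is $A$-definable and $q$ is definable over $A$, as wanted. The only genuine subtlety — and the place where definability of $p$ is actually used — is the mismatch between $p$ being a global type over $U$ and the witness $r$ living over the small set $A$; it is resolved precisely by using the definition scheme of $p$ to rewrite ``membership in $p$'' as a formula over $A$.
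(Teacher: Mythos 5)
This statement is quoted in the paper as a Fact with a citation to \cite[Theorem~3.5(2)]{invbartheory}, so there is no in-paper proof to compare against; your argument is essentially the standard proof of that cited result, and it is correct. The chain ``$\phi(y,c)\in q$ iff $p\cup r\proves\phi(y,c)$ iff some $\pi\in p$, $\rho\in r$ yield the implication'', followed by rewriting membership in $p$ via its definition scheme over $A$, is exactly right, and you correctly identify that this is where definability of $p$ enters. The only step worth one more line is the final compactness argument: the hypothesis you actually have is that the two families of $A$-definable sets cover $U^{\abs z}$, i.e.\ the \emph{realised} types, whereas you invoke a cover of all of $S_z(U)$; this is repaired by noting that all the defining formulas lie over the small set $A$ and that $\monster$ is $\abs A^+$-saturated, so the restriction of any global type to $A$ is realised in $U^{\abs z}$ and membership in these clopen sets depends only on that restriction (equivalently, run the compactness argument in $S_z(A)$). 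With that observation the extraction of a finite subcover, and hence the $A$-definability of $d_q\phi$, goes through as you describe.
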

We write $\deftilde$ for the restriction of $\invtilde$ to domination-equivalence classes of definable types.

\begin{defn} Let $P$ be a $\emptyset$-definable unary set. Consider the language
  \[\mathcal L_{\ind}=\{R_{\phi}(x): \phi(x)\in \mathcal L\}.\]
  Given $\mathcal M\models T$, the \emph{induced structure on $P$ by $\mathcal M$}, denoted by $\mathcal P(\mathcal M)$, is the $\mathcal L_{\ind}$-structure whose universe is $P(\mathcal M)$ and, for every $\phi(x)\in \mathcal L$  and  $a\in P(\mathcal M)^{\abs x}$,
\[\mathcal P(\mathcal M)\models R_\phi(a) \iff \cal M\models \phi(a).\]
\end{defn}

\begin{defn}
    If $P$ is a $\emptyset$-definable unary set, we say that $P$ is \emph{stably embedded} if for every $\phi(x)\in \mathcal L(U)$, the set $\phi(\monster)\cap P(\monster)^{\abs x}$ is definable with parameters in the structure $\mathcal P(\monster)$.
  \end{defn}
  \begin{rmk}\*
    \begin{enumerate}
    \item One may still talk of stable embeddedness for $A$-definable unary $P$ by adding to the language constants for elements of $A$. Stable embeddedness of such a $P$ does not depend on the choice of $A$.
    \item Whether $P$ is stably embedded does not depend on the choice of monster $\monster$.
    \end{enumerate}
\end{rmk}

\begin{fact}\label{fact-fullemb}
Let $R$ be a stably embedded unary definable set.  Denote by $\iota: \invtypes_{\mathcal R^{<\omega}}(R(\monster))\to S(U)$  the map sending an invariant type of $\mathcal R(\monster)$ in finitely many variables to the unique type of $\monster$ it implies. Then every type in the image of $\iota$ is invariant, and $\iota$ induces an embedding of posets $\invtildeof{\mathcal R(\monster)}\into\invtilde$, which is also a homomorphism for the relations $\wort$, $\nwort$, $\perp$, $\centernot\perp$. If $\otimes$ respects $\doms$, then it is also an embedding of monoids.
\end{fact}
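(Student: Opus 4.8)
The plan is to verify each of the asserted properties of the map $\iota$ in turn, leveraging the fact that $R$ is stably embedded so that every formula over $U$, when restricted to tuples from $R(\monster)$, becomes a formula over $\mathcal R(\monster)$ in the language $\mathcal L_{\ind}$ with parameters there. First I would make precise the definition of $\iota$: given an $A$-invariant type $\tilde p \in \invtypes_{\mathcal R^{<\omega}}(R(\monster))$ (with $A\subseteq R(\monster)$ small), the stable embeddedness of $R$ guarantees that for every $\phi(x,y)\in\mathcal L$ and every $d\in U^{\abs y}$, the set $\phi(\monster,d)\cap R(\monster)^{\abs x}$ is $\mathcal L_{\ind}$-definable over some small parameter set in $\mathcal R(\monster)$; hence the partial type $\set{\phi(x,d) : \phi(x,d)\cap R^{\abs x}\in\tilde p}$ over $U$ is consistent and complete, giving a well-defined type $\iota(\tilde p)=:p\in S(U)$. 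Invariance of $p$ over (the definable closure of) $A$ follows because an automorphism of $\monster$ fixing $A$ restricts to an automorphism of $\mathcal R(\monster)$ fixing $A$, and $\tilde p$ is $A$-invariant; equivalently one checks that $d_p\phi$ is the preimage of $d_{\tilde p}\phi^{\ind}$ under the restriction map $S_y(A)\to S_y^{\mathcal R}(A)$.

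Next I would establish that $\iota$ is well-defined and injective on domination-equivalence classes, i.e. that it descends to a map $\invtildeof{\mathcal R(\monster)}\to\invtilde$ that both preserves and reflects $\doms$. For preservation: if $\tilde r\in S^{\mathcal R}_{\tilde p\tilde q}(A)$ witnesses $\tilde p\doms\tilde q$ in $\mathcal R(\monster)$, then $\iota(\tilde r)$ witnesses $\iota(\tilde p)\doms\iota(\tilde q)$ in $\monster$, since $\iota(\tilde p)(x)\cup\iota(\tilde r)(x,y)$ implies every $\mathcal L(U)$-formula in $y$ holding of $\iota(\tilde q)$ — this reduces, via stable embeddedness, to the corresponding implication inside $\mathcal R(\monster)$. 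For reflection — the nontrivial direction — suppose $r\in S_{pq}(A')$ (with $A'\subseteq U$ small) witnesses $\iota(\tilde p)\doms\iota(\tilde q)$; here I would use that any realisation $(a,b)\models r$ has $a,b\in R(\monster)$ (as $p,q$ concentrate on powers of $R$) together with stable embeddedness to replace $r$ by its trace on $R$, an $\mathcal L_{\ind}$-type over a small subset of $R(\monster)$, and argue that this trace witnesses $\tilde p\doms\tilde q$; the point is that the domination statement only involves formulas in $x,y$ evaluated at tuples from $R(\monster)$. The same trace argument, applied to $r$ of the form needed to test weak orthogonality, shows $\iota(\tilde p)\wort\iota(\tilde q)\iff\tilde p\wort\tilde q$ and likewise for $\nwort$.

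For orthogonality $\perp$ and its negation, I would note that stable embeddedness is preserved under base change to any $B\supseteq U$ (with the analogous $R$-induced structure over $B$), and that $\iota$ commutes with the invariant-extension operation $\invext B$ in the appropriate sense — i.e. $(\iota\tilde p)\invext B$ is the type over $B$ induced by the $\mathcal R$-invariant extension of $\tilde p$ to $R(\monsterbis)\cap B$ — so that the weak-orthogonality equivalence just proved, applied uniformly over all such $B$, yields $\iota(\tilde p)\perp\iota(\tilde q)\iff\tilde p\perp\tilde q$. Finally, when $\otimes$ respects $\doms$: I would check $\iota(\tilde p\otimes\tilde q)=\iota(\tilde p)\otimes\iota(\tilde q)$ directly from the definitions by fixing $b\models\iota(\tilde q)$ (so $b$ is a tuple from $R(\monsterbis)$ realising $\tilde q$ in the induced structure) and using that $\phi(x,b)\in\iota(\tilde p)\invext U b$ iff the $\mathcal L_{\ind}$-trace of $\phi(x,b)$ lies in $\tilde p\invext{R(\monster)}b$; this makes $\iota$ a monoid homomorphism, and combined with the poset embedding it is a monoid embedding.

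I expect the main obstacle to be the reflection of $\doms$ (and of $\wort$, $\perp$): one must be careful that the witnessing type $r(x,y)$ over a small $A'\subseteq U$, which a priori mentions arbitrary parameters from $U$ outside $R$, can genuinely be replaced by an $\mathcal L_{\ind}$-type over $R(\monster)$ without losing the domination. This hinges on the precise content of stable embeddedness — that $\phi(\monster)\cap R(\monster)^{\abs x}$ is $\mathcal R(\monster)$-definable — applied not to $\phi$ alone but to the relevant implications, and on the observation that in $p(x)\cup r(x,y)\proves q(y)$ every variable ranges over $R$, so only the $R$-traces of all formulas involved matter; making this rigorous is where the real work lies, the rest being bookkeeping with the definitions in \Cref{dfn:domination,defin:invtensorth}.
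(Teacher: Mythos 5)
Your proposal is correct and follows essentially the same route as the paper, which for this fact simply cites \cite[Proposition~2.3.31]{mennuniInvariantTypesModel2020a} and \cite[Proposition~2.5]{hilsDominationMonoidHenselian2024}: those proofs are exactly the transfer-via-stable-embeddedness and trace argument you outline. In particular you correctly isolate the one point requiring care, namely that in the reflection direction the witness $r$ over a small $A'\subseteq U$ must be replaced by the $\mathcal L_{\ind}$-traces of its (smally many) formulas, whose defining parameters in $R(\monster)$ still form a small set.
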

\begin{proof}
See~\cite[Proposition~2.3.31]{mennuniInvariantTypesModel2020a} and  \cite[Proposition~2.5]{hilsDominationMonoidHenselian2024}. Note that the latter uses the terminology \emph{fully embedded}. In the context of the present paper, whenever we consider definable sets as structures, we always mean with the full induced structure, in which case full embeddedness coincides with stable embeddedness.
\end{proof}

\subsection{Domination in o-minimal theories}
Here we recall some facts regarding o-minimal theories and the behaviour of domination in them. We refer the reader to~\cite{dominomin} for details.

\begin{fact}\label{fact:Rstabemb}
  Let $T$ be o-minimal, and $R$ a unary definable set. Then $R$ is stably embedded.
\end{fact}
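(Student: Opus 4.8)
The plan is to verify the definition of stable embeddedness directly, by induction on arity. Unwinding the definitions, and using that $\Ll$ is closed under Boolean combinations, it suffices to show: for every $\Ll$-formula $\phi(\bar x,\bar y)$ with $|\bar x|=n$ ranging over $R$, there is an $\Ll$-formula $\phi^{\sharp}(\bar x,\bar z)$ such that for every $\bar b\in U^{|\bar y|}$ there is a finite tuple $\bar c$ from $R(\monster)$ (together with the finitely many $\es$-definable boundary points of the unary set $R$, which we may add to $\Ll_{\ind}$ harmlessly) such that $\phi(\monster,\bar b)\cap R^n=\phi^{\sharp}(\monster,\bar c)\cap R^n$. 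Note that any such $X:=\phi(\monster,\bar b)\cap R^n$ already lives inside $R(\monster)^n$, and is therefore automatically fixed, even pointwise, by $\Aut(\monster/R(\monster))$; so the whole content of the statement is that o-minimality forces this invariance to be witnessed by a \emph{small} piece of $R(\monster)$.

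The base case $n=1$ is immediate from o-minimality: $\phi(\monster,\bar b)\cap R$ is a finite union of singletons and relatively open subintervals of $R$, its singletons lie in $R(\monster)$, and the endpoints of its intervals lie in $R(\monster)$ together with the boundary points of $R$; uniform finiteness makes this description uniform in $\bar b$.

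For $n\ge 2$ one applies cell decomposition to the coordinate projection $R^n\to R^{n-1}$ that forgets the last coordinate. Uniformly in $\bar a'\in R^{n-1}$, the fibre $\{t\in R:\monster\models\phi(\bar a',t,\bar b)\}$ is described, via a fixed $\es$-definable decoding formula, by a tuple $g(\bar a')\in R^{2N}$ of endpoints, where $g\colon R^{n-1}\to R^{2N}$ is definable over $\bar b$; crucially, each coordinate of $g(\bar a')$ is either a boundary point of $R$ or the value at the $R$-point $\bar a'$ of a $\bar b$-definable function, hence lies in $R(\monster)$. Thus $\phi(\monster,\bar b)\cap R^n$ is definable from the graph of $g$, and one is reduced to the key sub-lemma: \emph{the graph of a definable function $R^k\to R$ (over parameters in $U$) is definable over a small subset of $R(\monster)$.} This sub-lemma is proved, again by o-minimal techniques, by invoking the Monotonicity Theorem and cell decomposition to break the function into finitely many continuous monotone or constant pieces, applying the inductive hypothesis to its level sets and cells (which live in lower-arity powers of $R$), and bookkeeping, as above, that every piece of defining data produced is a value of a definable function at a point of a cartesian power of $R$ and so stays inside $R(\monster)$.

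The main obstacle is exactly this bookkeeping in the inductive step: verifying that the cell-decomposition data attached to a definable subset of $R^n$ — cell bases, boundary functions, and their codes — can all be chosen inside $R(\monster)$, and that the recursion does not escape to uncontrolled parameters. The delicate point is that, \emph{a priori}, passing from a subset of $R^n$ to the graph of a boundary function lands one in a cartesian power of $R$ of \emph{higher} arity, so the induction must be organised carefully (for instance stratifying by o-minimal dimension, or proving the statements about subsets and about functions in tandem), and genuine o-minimal input — rather than purely formal model-theoretic manipulation — is needed precisely because there is in general no group structure available to translate arbitrary parameters into $R$.
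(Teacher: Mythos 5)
The paper does not prove this fact directly: it cites \cite[Lemma~1.2]{betgrrng} (via the remark after Lemma~2.3 of \cite{pest-tri}), which is precisely the statement that every definable subset of a cartesian power of an interval $I$ is definable with parameters from $\cl(I)$. Your proposal attempts a from-scratch proof whose top-level architecture (induction on arity, fibrewise decomposition, reduction to boundary functions) matches that classical argument, and your base case is fine. But there is a genuine gap exactly at the point you flag and then leave unresolved. The reduction to the sub-lemma about graphs of definable functions $g\colon R^{n-1}\to \cl(R)$ is a reduction to subsets of $R^{n}$ --- the \emph{same} arity you are in the middle of handling --- so the induction on arity is circular as stated, and the repairs you gesture at do not close it. In particular, after applying the Monotonicity Theorem the irreducible case is a strictly monotone continuous definable function of \emph{one} variable $g\colon (c,d)\to \cl(R)$ with $(c,d)\subseteq \cl(R)$, defined over external parameters $\bar b$: here the level sets are singletons, so ``applying the inductive hypothesis to its level sets'' yields nothing, the graph lives in $R^2$ rather than in a lower-arity power of $R$, and stratifying by o-minimal dimension does not help since this graph already has dimension~$1$.

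What is needed to close this case --- and what constitutes the real content of \cite[Lemma~1.2]{betgrrng} --- is a parameter-trading argument that none of your listed tools supplies. One standard way to run it uses the exchange property of $\dcl$: writing $D=\dcl(\bar b)\cap\cl(R)$, either $g(t)\in\operatorname{acl}(Dt)$ for all $t$ in a subinterval, in which case $g$ agrees piecewise with $\operatorname{acl}(D)$-definable functions and one finishes by separating finitely many conjugate graphs using points of $\cl(R)^2$; or, for generic $t$, the pair $(t,g(t))$ lowers the dimension of (a code of) $g$ over $D$, and iterating replaces all external parameters by finitely many points of the graph itself, which are legitimate parameters precisely because they lie in $\cl(R)^2$. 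Cell decomposition, uniform finiteness and ``bookkeeping'' do not produce this step, and without it the proof does not go through; your own closing remark that ``genuine o-minimal input is needed'' is correct but understates the issue, since the missing input is not organisational. Since the statement is exactly \cite[Lemma~1.2]{betgrrng}, the economical fix is to cite it, as the paper does; otherwise the one-variable function case must be proved in full.
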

\begin{proof}
  This is an easy consequence of~\cite[Lemma~1.2]{betgrrng}, as noted in~\cite[after Lemma~2.3]{pest-tri}.
\end{proof}

\begin{fact}[{\cite[Lemmas~1.24 and~1.25]{dominomin}}]\label{fct:wort1tp}
  If $T$ is o-minimal and $p,q\in S_1(A)$ are non-realised, then $p\nwort q$ if and only if there is an $A$-definable bijection $f$ such that $f_*p=q$. If $p,q\in\invtypes_1(U)$, then this is also equivalent to $p\domeq q$.
\end{fact}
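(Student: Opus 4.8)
The plan is to establish the first equivalence for arbitrary non-realised $p,q\in S_1(A)$, and then to read off the statement about $\domeq$ in the global invariant case. For the implication from a bijection to non-orthogonality, suppose $f$ is an $A$-definable bijection with $f_*p=q$. Then the $A$-formula $y=f(x)$ witnesses $p\nwort q$: it is consistent with $p(x)\cup q(y)$, being realised by $(a,f(a))$ for any $a\models p$; and so is its negation, since otherwise compactness would yield $\psi(x)\in p$ and $\chi(y)\in q$ with $\psi(x)\wedge\chi(y)\proves y=f(x)$, and then, fixing a realisation $b$ of $\chi$ and two distinct realisations $a\ne a'$ of $\psi$ (both solution sets being infinite as $p,q$ are non-realised), we would get $f(a)=b=f(a')$, contradicting injectivity. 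This direction is routine.

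For the converse I would start from a formula $\phi(x,y)\in\mathcal L(A)$ witnessing $p\nwort q$, fix $a\models p$, and use strong homogeneity to produce $b,b'\models q$ with $\phi(a,b)$ and $\neg\phi(a,b')$. By o-minimality, $\phi(a,\monster)$ is a finite union of points and open intervals with endpoints in $\dcl(Aa)\cup\{\pm\infty\}$; since $b$ lies in this set and $b'$ does not, one of these finitely many distinguished points---an interval endpoint or an isolated point---lies weakly between $b$ and $b'$, say $e$, so $e\in\dcl(Aa)$. A short cut argument---trivial if $e\in\{b,b'\}$, and otherwise using that $q$ is non-realised---shows that $e$ realises the cut over $\dcl(A)$ determined by $q$ and that $e\notin\dcl(A)$, hence $\tp(e/A)=q$; writing $e=g(a)$ for the associated $A$-definable endpoint function $g$, we obtain $g_*p=q$. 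The monotonicity theorem then produces an $A$-definable subinterval $J\in p$ on which $g$ is constant or strictly monotone, and the constant case is impossible since it would force $g(a)\in\dcl(A)$, contradicting that $q$ is non-realised; so $f:=g\restr J$ is an $A$-definable bijection onto its image with $f_*p=q$. Manufacturing such a genuine definable bijection from the single separating formula $\phi$ is the step I expect to require real care; the rest is routine manipulation of types.

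Finally, assume $p,q\in\invtypes_1(U)$, still non-realised. If $p\domeq q$, then in particular $p\doms q$, so if we also had $p\wort q$, then $q$ would be realised by \Cref{fact-wortdomeq}(\ref{point:wortdown}); hence $p\nwort q$. Conversely, given a definable bijection $f$ with finitely many parameters $\bar u\in U$ and $f_*p=q$, I would pick a small $A\supseteq\bar u$ over which $p$ and $q$ are invariant; then, for $a\models p$, the type $\tp(a,f(a)/A)$ belongs to $S_{pq}(A)$, contains $y=f(x)$, and together with $p(x)$ entails $q(y)$ because $f_*p=q$, so $p\doms q$; applying the same argument to $f^{-1}$, which is also over $A$, gives $q\doms p$, and therefore $p\domeq q$.
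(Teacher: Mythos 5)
Your argument is correct and is essentially the standard proof of the cited lemmas from \cite{dominomin} (which the paper invokes without reproducing): the endpoint/cut argument plus the Monotonicity Theorem for the hard direction, and the witness $y=f(x)$ together with \Cref{fact-wortdomeq} for the equivalence with $\domeq$. The only step deserving the care you flag — extracting the boundary point $e$ with $\tp(e/A)=q$ and upgrading the resulting $A$-definable function to a bijection on a type-definable interval — is handled exactly as in the reference, so there is nothing to add.
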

By \Cref{fact-wortdomeq}, $\perp$ is preserved by $\otimes$. Under \emph{distality}, so is $\wort$. Recall that every o-minimal theory is distal, and every distal theory is $\mathsf{NIP}$, see~\cite{simon_guide_2015}.
\begin{fact}[{\cite[Lemma~1.29]{dominomin}}]\label{fact:wortotimes}
Assume $T$ is distal and $q_0,q_1, p\in\invtypes(U)$. If $q_0\wort p$  and $q_1\wort p$, then $q_0\otimes q_1\wort p$. In particular, if $p\wort q$ then for every $n,m\in \omega$ we have $p^{n}\wort  q^{m}$.
\end{fact}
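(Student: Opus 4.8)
The plan is to reduce the statement to a routine induction once we isolate the content of distality that is actually used. First I would observe that, since $p,q_0,q_1$ are invariant over a common small base $A$, and weak orthogonality over $A$ automatically transfers along the construction of the Morley product, the real task is to show $(q_0 \otimes q_1) \wort p$ over $U$ itself. Fix $b \models p$ (working in the larger monster $\monsterbis$), and consider a realisation $(a_0, a_1) \models (q_0 \otimes q_1) \restr Ub$; by definition of $\otimes$ this means $a_1 \models q_1 \invext U$, $a_0 \models q_0 \invext U a_1$, hence also $a_0 \models q_0 \invext U$. The goal is to show $\tp(a_0 a_1 / Ub)$ is determined, i.e.\ that $a_0 a_1 \mathrel{\smile} b$ over $U$ in the sense that $\tp(a_0 a_1 b / U)$ is implied by $(q_0 \otimes q_1)\restr U \cup p \restr U$ together with the respective invariance data.

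The key step is to invoke distality in the form that controls how three mutually invariant types sit together: distal theories are exactly those in which the appropriate `base change' for indiscernible sequences holds, and concretely one has that if $\tp(a_1/Ub)$ and $\tp(b/U a_1)$ are both as invariant as possible — which is what $q_1 \wort p$ gives after one checks invariance — then the pair $(a_1, b)$ is mutually indiscernible-friendly, and similarly for $(a_0, b)$ using $q_0 \wort p$. I would then string these together: from $q_1 \wort p$ we get that $\tp(b / U a_1)$ is the unique $Ua_1$-extension of $p\restr U$ of the right invariant kind; from $q_0 \wort p$ and the fact that $a_0 \models q_0 \invext{} Ua_1$, we get that adding $a_0$ does not split $b$'s type further. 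Assembling: $\tp(b/Ua_0a_1)$ is the canonical extension, so $\tp(a_0a_1/Ub)$ is forced, which is precisely $(q_0\otimes q_1)\wort p$. The ``In particular'' clause is then immediate: starting from $p \wort q$ one shows $p^n \wort q$ by induction on $n$ using the just-proved statement with $q_0 = p^{n-1}$, $q_1 = p$ (and $q$ in place of the ``$p$'' of the statement), and then symmetrically $p^n \wort q^m$ by a second induction on $m$, using that $\wort$ is symmetric and that $p^n$ is again invariant over $A$.

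The main obstacle I expect is pinning down precisely which equivalent characterisation of distality makes the ``$(a_1,b)$ and $(a_0,b)$ behave well $\Rightarrow (a_0a_1,b)$ behaves well'' step clean, and verifying the bookkeeping that $a_0 \models q_0 \invext{} U a_1$ (rather than merely $a_0 \models q_0 \invext{} U$) is exactly what lets the two weak-orthogonality facts compose rather than just coexist. Everything else — the transfer of weak orthogonality from $A$ to $U$, the symmetry of $\wort$, and the two nested inductions — is routine and can be cited from~\cite{dominomin} or checked directly; the cited source~\cite[Lemma~1.29]{dominomin} should supply the distality input in ready-to-use form, so in practice the proof is short.
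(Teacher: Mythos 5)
Your reduction and the two nested inductions for the ``in particular'' clause are fine, but the core step is not actually proved: you have located the difficulty (``pinning down precisely which equivalent characterisation of distality makes the step clean'') without resolving it, and the resolution is the whole content of the lemma. The paper gives no proof here, citing \cite[Lemma~1.29]{dominomin}; the argument there runs through Simon's characterisation of distality: in a distal theory, two global invariant types that \emph{commute} are weakly orthogonal (the converse holds in any theory, cf.\ \Cref{fact-wortdomeq}(\ref{point:fwd5})). Given that, the proof is a two-line computation: $q_0\wort p$ and $q_1\wort p$ give $q_0\otimes p=p\otimes q_0$ and $q_1\otimes p=p\otimes q_1$, whence by associativity $(q_0\otimes q_1)\otimes p=q_0\otimes(p\otimes q_1)=(q_0\otimes p)\otimes q_1=p\otimes(q_0\otimes q_1)$, and distality converts this commutation back into $q_0\otimes q_1\wort p$. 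Your sketch never produces this (or any other) precise distality input; ``mutually indiscernible-friendly'' is not a defined notion, and the appeal to ``base change for indiscernible sequences'' is not cashed out.

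Concretely, the step that fails in your direct approach is ``from $q_0\wort p$ and $a_0\models q_0\invext Ua_1$, adding $a_0$ does not split $b$'s type further.'' Weak orthogonality $q_0\wort p$ only determines $\tp(a_0b/U)$; it says nothing about $\tp(b/Ua_0a_1)$. In general, knowing all three pairwise types $\tp(a_0b/U)$, $\tp(a_1b/U)$, $\tp(a_0a_1/U)$ does not determine $\tp(a_0a_1b/U)$, and \Cref{fact:wortsimon} is not applicable since $q_0$ and $q_1$ need not be weakly orthogonal to each other. There is also a smaller setup issue: to prove $(q_0\otimes q_1)\wort p$ you must consider an \emph{arbitrary} realisation of $(q_0\otimes q_1)(x_0,x_1)\cup p(y)$, whereas you fix $(a_0,a_1)\models(q_0\otimes q_1)\restr Ub$, which already presupposes the particular configuration you are trying to show is the only one. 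Both problems disappear once the commuting-implies-weakly-orthogonal form of distality is in hand.
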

\begin{fact}[\!\!{\cite[Corollary~4.7]{siminvnip}}]\label{fact:wortsimon}
  If $T$ is $\mathsf{NIP}$ and $\set{p_i: i\in I}$ is a family of pairwise weakly orthogonal invariant types then $\bigcup_{i\in I} p_i(x^i)$ is complete.
\end{fact}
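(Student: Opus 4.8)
The plan is to reduce to a finite family and then induct on its size; the essential case turns out to be that of three types. For the reduction: if $\bigcup_{i\in I}p_i(x^i)$ were incomplete, then some $\varphi\in\mathcal L(U)$ and $\neg\varphi$ would both be consistent with it, and since $\varphi$ mentions only the $x^i$ for $i$ in some finite $S\subseteq I$, already $\bigcup_{i\in S}p_i(x^i)$ would be incomplete. So we may assume $I=\{1,\dots,n\}$ and induct on $n$: the cases $n\le 1$ are trivial, and $n=2$ is the definition of $\wort$. For $n\ge 3$ put $s:=\bigcup_{i=1}^{n-2}p_i$. By the inductive hypothesis, applied to the (still pairwise weakly orthogonal) subfamilies $\{p_1,\dots,p_{n-2}\}$, $\{p_1,\dots,p_{n-1}\}$, $\{p_1,\dots,p_{n-2},p_n\}$: the type $s$ is complete, and in fact invariant — it is contained in the invariant type $\bigotimes_{i=1}^{n-2}p_i$, using the easy fact that $q_1\otimes q_2\supseteq q_1\cup q_2$ for invariant $q_1,q_2$, and hence equals it; moreover $s\wort p_{n-1}$ and $s\wort p_n$ (because $s\cup p_{n-1}$, resp.\ $s\cup p_n$, is complete), while $p_{n-1}\wort p_n$ is part of the hypothesis. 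The key lemma below, applied to $(s,p_{n-1},p_n)$, then yields that $\bigcup_{i=1}^{n}p_i=s\cup p_{n-1}\cup p_n$ is complete, closing the induction.

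It remains to prove the key lemma: \emph{three pairwise weakly orthogonal types $p_1,p_2,p_3\in\invtypes(U)$ in an $\mathsf{NIP}$ theory have complete union $p_1\cup p_2\cup p_3$}. I would argue by contradiction through the bounded-alternation characterisation of $\mathsf{NIP}$. Fix a small $A\subseteq U$ over which the $p_i$ are invariant. If $p_1\cup p_2\cup p_3$ is incomplete, choose $\varphi(x_1,x_2,x_3)\in\mathcal L(U)$ and, in $\monsterbis$, realisations $a_i\models p_i$ with $\models\varphi(a_1,a_2,a_3)$ and $a_i'\models p_i$ with $\models\neg\varphi(a_1',a_2',a_3')$. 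Using $p_1\wort p_2$ to align the first two coordinates, we may take $a_1'=a_1$ and $a_2'=a_2$; then $p_1\wort p_3$ and $p_2\wort p_3$ give $a_3\equiv_{Ua_1}a_3'$ and $a_3\equiv_{Ua_2}a_3'$, whereas $a_3\not\equiv_{Ua_1a_2}a_3'$ since $\varphi$ separates $a_3$ from $a_3'$ over $Ua_1a_2$. I would then feed these two equivalences into a construction producing an $A$-indiscernible sequence $(a_1^k,a_2^k)_{k<\omega}$ of realisations of $p_1\cup p_2$, together with a single $d\models p_3$, such that $\varphi(a_1^k,a_2^k,d)$ changes truth value infinitely often as $k$ increases; the partitioned formula $\varphi(x_1x_2;x_3)$ would then have infinite alternation rank, contradicting $\mathsf{NIP}$.

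The main obstacle is carrying out this last construction — in particular, guaranteeing that the prescribed alternation pattern is realised by a single $d$ satisfying $p_3$, which is done by a back-and-forth together with compactness and an extraction of indiscernibles. This is precisely the place where the extra pairwise weak orthogonality among $p_1,p_2,p_3$ allows one to work with $\mathsf{NIP}$ rather than the distality used in \Cref{fact:wortotimes}. Granting the key lemma, the theorem follows by the induction above. (One can also organise the same $\mathsf{NIP}$ input around a characterisation of weak orthogonality of invariant types via Morley sequences, but the alternation phenomenon is the conceptual heart either way.)
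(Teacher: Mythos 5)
First, note that the paper does not prove this statement: it is imported verbatim from \cite[Corollary~4.7]{siminvnip}, so there is no internal proof to compare yours against, and your attempt must stand on its own. Your reduction to finite families is correct, and so is the induction collapsing everything to the three-type case: once $s=\bigcup_{i\le n-2}p_i$ is complete it determines the invariant type $p_1\otimes\cdots\otimes p_{n-2}$, the inductive hypothesis applied to the three subfamilies gives $s\wort p_{n-1}$, $s\wort p_n$, and $p_{n-1}\wort p_n$ is assumed, so a three-type lemma would indeed close the induction. The difficulty is that after this reduction nothing has been gained: the three-type lemma \emph{is} the cited result in its essential case, and you do not prove it. You correctly derive the configuration $a_3\equiv_{Ua_1}a_3'$, $a_3\equiv_{Ua_2}a_3'$, $a_3\not\equiv_{Ua_1a_2}a_3'$, but then assert that one can "feed these equivalences into a construction" producing an indiscernible sequence along which $\phi$ alternates against a single $d\models p_3$, and you yourself flag this construction as "the main obstacle" without carrying it out. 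That construction is the entire mathematical content of the theorem; as written, the proposal restates the difficulty rather than resolving it.

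The gap is not a routine omission, because the deferred step is exactly where the hypothesis $p_1\wort p_2$ must do real work, and your sketch gives no indication of how it enters. If the alternation construction could be run from $a_3\equiv_{Ua_1}a_3'$ and $a_3\equiv_{Ua_2}a_3'$ alone, it would prove the conclusion of \Cref{fact:wortotimes} for arbitrary $\mathsf{NIP}$ theories, whereas that statement is only established under distality; the whole point of Simon's corollary is that pairwise weak orthogonality (in particular $p_1\wort p_2$, which guarantees that $(a_1,a_2)$ and $(a_1',a_2')$ realise the same complete type $p_1\otimes p_2$ and hence that $\tp(a_1/Ua_2)$ is the coherent invariant extension $p_1\restr Ua_2$) substitutes for distality. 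A correct argument has to exploit this coherence, e.g.\ via Morley sequences of the commuting product $p_1\otimes p_2$ and the eventual-constancy of truth values along them, and none of that is present. Unless you supply the construction in full, the appropriate course is to keep the statement as a citation to \cite[Corollary~4.7]{siminvnip}.
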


\begin{fact}\label{fact:idemp}
  If $T$ is o-minimal and $p\in \invtypes_1(U)$, then for every $n>0$ we have  $p(x)\domeq  p(y_0)\otimes\ldots\otimes p(y_n)$. Moreover, this can be witnessed by a small type implying $x=y_n$.
\end{fact}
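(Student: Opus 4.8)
The plan is to prove separately the two dominations $p(y_0)\otimes\cdots\otimes p(y_n)\doms p(x)$ and $p(x)\doms p(y_0)\otimes\cdots\otimes p(y_n)$. For the first, note that every coordinate of a Morley power of $p$ realises $p$ over $U$; hence, if $A$ is small and $p$ is $A$-invariant, the unique complete type over $A$ that implies $x=y_n$ and extends $\bigl(p(y_0)\otimes\cdots\otimes p(y_n)\bigr)\restr A$ witnesses it, which already gives the ``moreover'' for this direction. For the second, assuming the base case $p(x)\doms p(y_0)\otimes p(y_1)$, repeatedly applying the left-compatibility of $\otimes$ with $\doms$ from \Cref{fact-wortdomeq} and transitivity yields a chain $p\doms p^{\otimes 2}\doms\cdots\doms p^{\otimes(n+1)}$; following the witnesses along it — each step fuses the $n=1$ witness with the identity type of the newly added variable — one checks that the resulting witness still implies $x=y_n$.

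So everything reduces to $p(x)\doms p(y_0)\otimes p(y_1)$, and I may assume $p$ is not realised. Fix a small $A$ with $p$ $A$-invariant and let $r=r(x,y_0,y_1)$ be the complete type over $A$ implying $x=y_1$ and extending $\bigl(p(y_0)\otimes p(y_1)\bigr)\restr A$; this is consistent (realise $p(y_0)\otimes p(y_1)$ and let $x$ be its $y_1$-coordinate), its $x$-restriction is $p\restr A$, and it is of the claimed form. It then remains to verify that $p(x)\cup r(x,y_0,y_1)\proves p(y_0)\otimes p(y_1)$. Writing $c=c_1$ for the common value taken by $x$ and $y_1$, this unwinds to the statement: \emph{if $c_1\models p$ and $c_0$ realises the restriction to $Ac_1$ of $p\invext Uc_1$, then $c_0\models p\invext Uc_1$}; equivalently, for $c_1\models p$ the type $p\invext Uc_1$ is the unique completion over $Uc_1$ of its restriction to $Ac_1$.

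I expect this last statement to be the main obstacle. It rests on an ``o-minimal squeezing'' phenomenon: since $c_1$ realises $p$, every element of $\dcl(Uc_1)$ lying on the scale of the cut that defines $p\invext Uc_1$ is bounded, from the appropriate side, by an element of $\dcl(c_1)$, because every $u\in U$ is tamely positioned with respect to $c_1$. To make this precise I would use that a non-realised invariant $1$-type concentrates either at $\pm\infty$ or infinitely close to a definable point, and that definable bijections preserve $\doms$ and commute with $\otimes$, so that ``being idempotent'' transfers along them. If $p$ is domination-equivalent to $\pm\infty$, one reduces to $p=+\infty$, where the bound is immediate: $\abs u\le c_1$ for every $u\in U$, so any $t(u,c_1)\in\dcl(Uc_1)$ is at most $\sup\{\,\abs{t(z,c_1)} : \abs z\le c_1\,\}\in\dcl(c_1)$. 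The remaining case — $p$ not domination-equivalent to $\pm\infty$, which over an ordered group forces the ambient structure to be semi-bounded — is the delicate one; here one exploits the decomposition of definable functions into a linear part and a bounded part, the latter being controlled, on the infinitesimal scale around the relevant point, by $\dcl(c_1)$.
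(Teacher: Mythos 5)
Your reduction is sound as far as it goes: the direction $p(y_0)\otimes\cdots\otimes p(y_n)\doms p(x)$ is indeed immediate, the induction on $n$ is routine bookkeeping with witnesses, and isolating the statement that, for $c_1\models p$, the type $p\invext Uc_1$ is the unique completion over $Uc_1$ of $(p\invext Uc_1)\restr Ac_1$ is exactly the right move. That statement is precisely the content of the result the paper invokes here, namely \cite[Corollary~2.4]{dominomin} combined with \Cref{fact-wortdomeq}(2), and the `moreover' clause is read off from the proof of that corollary; the paper does not reprove it. So everything hinges on your treatment of what you correctly identify as the main obstacle, and that treatment has a genuine gap.

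The case division is wrong. The fact is stated for an arbitrary o-minimal $T$ (no ordered group is assumed), and even in o-minimal expansions of ordered groups --- indeed already in $\rcf$ --- there are invariant $1$-types not domination-equivalent to $p_{\pm\infty}$, and their existence has nothing to do with semi-boundedness: take a small model $A$ containing a strictly increasing sequence $(a_i)_{i<\omega}$ and let $p$ say that $x>a_i$ for all $i$ while $x<b$ for every $b\in U$ exceeding all the $a_i$. By saturation of $\monster$ this cut is not realised in $U$ and has no definable set of upper bounds, so $p$ is a non-realised, $A$-invariant, non-definable type; since definable bijections preserve definability of types, $p$ is not in definable bijection with $p_{+\infty}$, hence by \Cref{fct:wort1tp} not domination-equivalent to it. Thus your ``delicate case'' occurs in every o-minimal theory, the inference ``not equivalent to $\pm\infty$ forces semi-boundedness'' is false, and the linear-plus-bounded decomposition you propose for it is unavailable. (The dichotomy you are reaching for --- at infinity, or infinitesimally close to a point of $\dcl(A)$ --- classifies \emph{definable} $1$-types, not invariant ones; the correct parameter for the general case is which side of the cut is controlled by $\dcl(A)$, and the squeezing argument must then be run against an arbitrary $U$-definable function of $c_1$, not just translates.) A secondary defect: even in the $p_{+\infty}$ case the quantity $\sup\{\abs{t(z,c_1)}:\abs z\le c_1\}$ need not exist, as $z\mapsto t(z,c_1)$ may have poles in $[-c_1,c_1]$; one must first localise $u$, via invariance of $p$, to one of the finitely many $\dcl(c_1)$-definable intervals of monotonicity, and compare cuts rather than bound by elements of $\dcl(c_1)$. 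As written, the argument does not close.
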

\begin{proof}
The first statement follows from \Cref{fact-wortdomeq}(\ref{point:fwd2}) and~\cite[Corollary~2.4]{dominomin}, while the `moreover' part is clear from the proof of the latter.
\end{proof}
\begin{fact}\label{fact:redto1tps}
  Assume $T$ is o-minimal and let $D(x,y)$ be the relation of being disjoint.
  \begin{enumerate}
  \item If every $p\in\invtypes(U)$ is domination-equivalent to a product of $1$-types, then $\otimes$ and $\doms$ are compatible and, if $X(\monster)$ is the set of non-realised invariant $1$-types modulo being in definable bijection, then
    \[
      (\invtilde, \otimes, \doms, \wort)\cong (\mathscr P_{<\omega}(X(\monster)), \cup, \supseteq, D).
    \]
      \item\label{point:reddeftps} If every $p\in\deftypes(U)$ is domination-equivalent to a product of $1$-types, then $\otimes$ and $\doms$ are compatible on definable types and, if $Y(\monster)$ is the set of non-realised definable $1$-types modulo being in definable bijection, then
    \[
      (\deftilde, \otimes, \doms, \wort)\cong (\mathscr P_{<\omega}(Y(\monster)), \cup, \supseteq, D).
    \]
  \end{enumerate}
\end{fact}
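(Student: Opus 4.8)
The plan is to build an explicit bijection $\sigma$ from $\invtilde$ onto $\mathscr P_{<\omega}(X(\monster))$ and check that it intertwines $\doms$, $\otimes$ and $\wort$; part~(\ref{point:reddeftps}) will then follow by rerunning the argument inside the definable types. The hypothesis that every invariant type is domination-equivalent to a product of $1$-types reduces everything to the $1$-variable case, where \Cref{fct:wort1tp} gives complete control: two non-realised invariant $1$-types are domination-equivalent iff they are not weakly orthogonal iff one is a definable-bijection pushforward of the other; thus $X(\monster)$ is precisely the set of $\domeq$-classes of non-realised invariant $1$-types, and $1$-types in distinct classes are weakly orthogonal. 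Recall also that $T$, being o-minimal, is distal, so \Cref{fact:wortotimes} is available, and that $1$-types are idempotent for $\otimes$ up to $\domeq$ (\Cref{fact:idemp}).

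The first and main step is a \textbf{normal form lemma}: every product $p_1 \otimes \dots \otimes p_k$ of invariant $1$-types is domination-equivalent to a product of \emph{pairwise weakly orthogonal} non-realised $1$-types, one in each $\domeq$-class occurring among the non-realised $p_i$. I would prove this by processing the factors left to right, keeping a reduced product $R \domeq p_1 \otimes \dots \otimes p_{t-1}$ and forming $R \otimes p_t$ (legitimate by \Cref{fact-wortdomeq}(\ref{point:fwd1})). If $p_t$ is realised it is weakly orthogonal to everything, commutes to the front (\Cref{fact-wortdomeq}(\ref{point:fwd5}), extended to products via \Cref{fact:wortotimes}), and is dropped; if $p_t$ has a new class it is appended; if $p_t$ has the class of an existing factor $s$ of $R$, then $p_t$ commutes past the factors of $R$ to the right of $s$ and is absorbed, using that $s \otimes s \domeq s$ (\Cref{fact:idemp}) pushes forward along $(x,y) \mapsto (x, f(y))$ --- where $f$ is a definable bijection with $f_* s = p_t$ --- to give $s \otimes p_t \domeq s$. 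The delicate point is replacing $s \otimes p_t$ by $s$ inside the big product: this is a left-multiplication, not known to respect $\domeq$ in general, but here the surrounding left context is weakly orthogonal to both $s$ and $p_t$, so \Cref{fact-wortdomeq}(\ref{point:fwd6}) (with \Cref{fact:wortotimes}) licenses it. This careful bookkeeping with the non-commutativity of $\otimes$ is, I expect, the main obstacle. An easy induction with the same tools then shows that two reduced products over the same class-set are domination-equivalent.

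Next, for $p \in \invtypes(U)$ let $\sigma(\class p)$ be the set of $\domeq$-classes of the non-realised $1$-types in some decomposition of $p$ into $1$-types. This is well defined: if a non-realised factor $p_{i_0}$ of one decomposition had a class missing from another decomposition $p'_1 \otimes \dots \otimes p'_l$, then $p_{i_0}$ would be weakly orthogonal to every $p'_j$, hence to their product (\Cref{fact:wortotimes}), which is domination-equivalent to $p$ and hence dominates $p_{i_0}$ (coordinate projection), forcing $p_{i_0}$ realised by \Cref{fact-wortdomeq}(\ref{point:fwd3}). So $\sigma$ descends to $\invtilde$ and is visibly onto $\mathscr P_{<\omega}(X(\monster))$. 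It is an order anti-isomorphism onto $(\mathscr P_{<\omega}(X(\monster)), \supseteq)$: if $\sigma(\class p) \supseteq \sigma(\class q)$, reorder a reduced product for $p$ so that the factors with class in $\sigma(\class q)$ come first (they pairwise commute), project onto them, and match the result with a reduced product for $q$ to get $p \doms q$; conversely if $p \doms q$ but $c \in \sigma(\class q) \setminus \sigma(\class p)$, then for a $1$-type $q_c$ of class $c$ we have $p \doms q \doms q_c$ while every factor of a reduced product for $p$ has class $\neq c$, hence is weakly orthogonal to $q_c$, so $p \wort q_c$ and \Cref{fact-wortdomeq}(\ref{point:fwd3}) makes $q_c$ realised --- a contradiction. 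In particular $\sigma$ is a bijection, and $c \in \sigma(\class s) \iff s \doms q_c$.

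Finally I would transport $\otimes$ and $\wort$. The coordinate projections give $p \otimes q \doms p$ and $p \otimes q \doms q$, so $\sigma(\class{p \otimes q}) \supseteq \sigma(\class p) \cup \sigma(\class q)$; conversely if $c \in \sigma(\class{p \otimes q})$, i.e.\ $p \otimes q \doms q_c$, then $c \in \sigma(\class p) \cup \sigma(\class q)$, since otherwise $p \wort q_c$ and $q \wort q_c$ would give $p \otimes q \wort q_c$ (\Cref{fact:wortotimes}) and $q_c$ realised (\Cref{fact-wortdomeq}(\ref{point:fwd3})). Thus $\sigma(\class{p \otimes q}) = \sigma(\class p) \cup \sigma(\class q)$, which together with the order anti-isomorphism shows at once that $\otimes$ respects $\doms$; hence $\invtilde$ is a monoid and $\sigma$ is a monoid isomorphism onto $(\mathscr P_{<\omega}(X(\monster)), \cup)$, with the realised class mapping to $\es$. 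The same circle of ideas gives $p \wort q \iff \sigma(\class p) \cap \sigma(\class q) = \es$ (disjoint class-sets yield weak orthogonality by \Cref{fact:wortotimes} and \Cref{fact-wortdomeq}(\ref{point:fwd3}); a shared class $c$ forces $q_c \wort q \doms q_c$, hence $q_c$ realised, impossible), which is exactly the relation $D$. For part~(\ref{point:reddeftps}) one reruns everything among the definable types: by \Cref{fact:domprdef} every $1$-type in a decomposition of a definable type is definable (being a projection, hence dominated), products of definable types are definable, so reduced products and $\sigma$ stay in the definable world, with $X(\monster)$ replaced throughout by $Y(\monster)$.
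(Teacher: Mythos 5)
Your proposal is correct and takes essentially the same approach as the paper, which for part (1) simply cites \cite[Theorem~2.13]{dominomin} and for part (2) observes that the cited proof goes through verbatim among definable types (using \Cref{fact:domprdef} to stay in the definable world, exactly as you do). Your normal-form lemma for products of $1$-types and the transport of $\doms$, $\otimes$, $\wort$ to $\supseteq$, $\cup$, $D$ via the class-set map $\sigma$ is a faithful reconstruction of that cited argument, with the key inputs (\Cref{fct:wort1tp}, \Cref{fact:idemp}, \Cref{fact:wortotimes}, and \Cref{fact-wortdomeq}) deployed in the same way.
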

\begin{proof}
  The first part is~\cite[Theorem~2.13]{dominomin}. The second part is obtained by observing that the proof of the cited result still goes through when all types considered are definable.
\end{proof}
\subsection{Extremal witnesses}

In order to simplify some of the technical statements to follow, we introduce the following notion.

\begin{defin}
  Let $p(x),q(y)\in \invtypes(U)$. We call a witness $r(x,y)$ of $p\doms q$ \emph{extremal} if
  \begin{enumerate}
  \item there is a definable function $\tau$ such that $r(x,y)\proves y=\tau(x)$, and
\item if $(a,b)\models p(x)\cup r(x,y)$, then there is no $c\in \dcl(U a)\setminus U$ such that $\tp(c/U)$ is realised in $\dcl(Ub)$ and $\tp(bc/U)=\tp(b/U)\otimes \tp(c/U)$. In this case, we say that $b$ is \emph{extremal} with respect to $a$ and $U$.
\end{enumerate}
If the same $r$ also witnesses $q\doms p$, then we say that $r$ is an \emph{extremal witness of $p\domeq q$}.
\end{defin}
In other words, in the notation above, $b$ is extremal if it is not extendable `invariantly on the right' in $\dcl(U a)\setminus U$ by points with the same type as something in $\dcl(Ub)$. That is, there is no $q'$ realised in $\dcl(Ub)$ such that $q(y)\otimes q'(z)$ is realised in $\dcl(U a)\setminus U$ by some tuple of the form $(b,c)$. This does not prevent tuples of the form $(c,b)$ from realising an invariant type of the form $q'(z)\otimes q(y)$, as clear from the example below.
\begin{example}
Let $T$ be an o-minimal theory, $p_{+\infty}$ the type at infinity,  $p(x)\coloneqq p_{+\infty}(x_0)\otimes p_{+\infty}(x_1)$, and $q(y)=p_{+\infty}(y)$. If a witness of $p\doms q$ contains $x_1=y$, then it is extremal, but if it contains $x_0=y$ then it is not.
\end{example}

 The following remark is easy to prove, arguing as in the
proof of \cite[Lemma~1.24]{dominomin}.
\begin{remark}\label{remark:1dir} In an o-minimal theory, a $1$-type
$q$ is dominated by some $p$ if and only if, for every $a\models
p$, the $1$-type $q$ is realised in $\dcl(U a)$.
\end{remark}
\begin{lemma}\label{lemma:1tp1at}
Let $T$ be o-minimal, and for $i\le n$ let $q_i\in \invtypes_1(U)$ be non-realised, pairwise weakly orthogonal, and such that $q\doms q_i$.
\begin{enumerate}
\item\label{rem:extrwit} There is an extremal witness of  $q(x)\doms (\bla q0\otimes n)(y)$.
\item\label{lemma:extrcompr} For every extremal witness $r(x,y)$ of $q(x)\doms (\bla q0\otimes n)(y)$, and every $\bla p0,m\in \invtypes_1(U)$ such that every $p_i$ is domination-equivalent to some $q_j$, there is an extension $r'(x,y,z)$ of $r$ witnessing $q(x)\doms(\bla p{0}\otimes{m})(z)\otimes (\bla q0,n)(y)$. Moreover, the analogous statement where every $\doms$ is replaced by $\domeq$ also holds.
\end{enumerate}

\end{lemma}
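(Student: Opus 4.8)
The plan is to prove both parts by induction on $n$, bootstrapping from the case $n=0$ (where the statement degenerates or is handled by \Cref{fact:idemp}) and using weak orthogonality of the $q_i$ repeatedly to glue witnesses together without destroying extremality. For part \ref{rem:extrwit}, I would first invoke \Cref{remark:1dir}: since $q\doms q_i$ for each $i$, every $q_i$ is realised in $\dcl(Ua)$ for any $a\models q$; say $q_i$ is realised by $b_i=\tau_i(a)$ for some $U$-definable $\tau_i$. By \Cref{fact:wortsimon} (using that $T$ is o-minimal, hence $\mathsf{NIP}$), the pairwise weak orthogonality of the non-realised $q_i$'s forces $\bigcup_i q_i(y_i)$ to be complete, so $\tp(b_0\ldots b_n/U)=(q_0\otimes\cdots\otimes q_n)(y)$ — here one should double-check that the tuple $(b_0,\dots,b_n)$ genuinely realises the Morley product and not merely the union, which follows because weak orthogonality of invariant types gives commuting and, inductively, $\tp(b_0\dots b_j/U)=\tp(b_0\dots b_{j-1}/U)\otimes q_j$ by \Cref{fact-wortdomeq}(\ref{point:fwd5}) together with \Cref{fact:wortotimes}. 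Setting $r(x,y)\coloneqq\tp(a,b_0\dots b_n/U)$ gives a witness with $\tau(x)=(\tau_0(x),\dots,\tau_n(x))$. It remains to arrange extremality: if $b=(b_0,\dots,b_n)$ is not extremal with respect to $a$ and $U$, then there is $c\in\dcl(Ua)\setminus U$ with $\tp(c/U)$ realised in $\dcl(Ub)$ and $\tp(bc/U)=\tp(b/U)\otimes\tp(c/U)$; since $T$ is o-minimal one reduces $c$ to a single coordinate and $\tp(c/U)$ is then domination-equivalent to (hence, by \Cref{fct:wort1tp}, in definable bijection with) one of the $q_j$. I would absorb such $c$ into the tuple — replacing $y$ by a longer tuple — and iterate; the iteration must terminate, and the mechanism for this is that each absorbed coordinate strictly increases some rank (e.g. the number of $\domeq$-classes among $q_0,\dots,q_n$ that do not yet appear, or more robustly $\operatorname{scl-rk}$ or $\dim$ of $b$ over $U$, which is bounded by $\dim$ of $a$ over $U$). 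This termination argument is the step I expect to be the main obstacle: one must argue that no infinite chain of such "right-extensions by points with types among the $q_j$" exists inside $\dcl(Ua)$, using the finiteness of $\dim(a/U)$ and the fact that adding $c$ with $\tp(bc/U)=\tp(b/U)\otimes\tp(c/U)$ increases $\dim(\cdot/U)$ by $\dim(c/U)\ge 1$.

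For part \ref{lemma:extrcompr}, start with the given extremal witness $r(x,y)$ of $q\doms(q_0\otimes\cdots\otimes q_n)(y)$ with $r\proves y=\tau(x)$, and take $a\models q$, $b=\tau(a)\models q_0\otimes\cdots\otimes q_n$. For each $p_i$, by hypothesis $p_i\domeq q_{j(i)}$, so by \Cref{fct:wort1tp} there is a $U$-definable bijection $g_i$ with $(g_i)_* p_i=q_{j(i)}$; equivalently $q_{j(i)}$ is in definable bijection with $p_i$, and in particular $p_i$ is realised in $\dcl(Ub)$ — pick $c_i\in\dcl(Ub)$ with $\tp(c_i/U)=p_i$. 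The point is then to show $\tp(c_0\dots c_m,b_0\dots b_n/U)=(p_0\otimes\cdots\otimes p_m)(z)\otimes(q_0\otimes\cdots\otimes q_n)(y)$. The $q_j$'s are pairwise weakly orthogonal, and each $p_i$ is weakly orthogonal to each $q_j$ it is not $\domeq$ to (again by \Cref{fct:wort1tp}, since non-$\domeq$ non-realised $1$-types are weakly orthogonal), while even for $p_i\domeq q_{j(i)}$ one still has that $c_i\in\dcl(Ub)$, so adding it on the left does not create new information over $Ub$. Thus $\tp(c_0\dots c_m/Ub)$ is determined: by \Cref{fact:wortsimon} applied over $U$ (or by \Cref{fact:wortotimes} for the $\otimes$-refinement) the whole tuple realises the claimed Morley product, and since $c_i\in\dcl(Ub)\subseteq\dcl(Ua)$, setting $r'(x,y,z)\coloneqq\tp(a,b,c_0\dots c_m/U)$ extends $r$ and witnesses $q\doms(p_0\otimes\cdots\otimes p_m)(z)\otimes(q_0\otimes\cdots\otimes q_n)(y)$ via $q\proves y=\tau(x)$ and $z=$ (definable function of $x$). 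One must check that this $r'$ is still a legitimate witness in the sense of \Cref{dfn:domination}, i.e. $r'\in S_{q,\,(p\otimes q)}(U)$ and $q\cup r'\proves(p\otimes q)$ — both are immediate from the construction since the right-hand tuple lies in $\dcl(Ua)$.

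Finally, for the "moreover" clause with $\domeq$ in place of $\doms$: here one is additionally given $q\domeq(q_0\otimes\cdots\otimes q_n)$, witnessed by the same extremal $r$, and wants the extended $r'$ to witness $q\domeq(p_0\otimes\cdots\otimes p_m)\otimes(q_0\otimes\cdots\otimes q_n)$. The forward direction $q\doms(p\otimes q)$ is what was just done. For the reverse, $(p_0\otimes\cdots\otimes p_m)\otimes(q_0\otimes\cdots\otimes q_n)\doms q_0\otimes\cdots\otimes q_n\doms q$: the first $\doms$ is the obvious projection (a product dominates any sub-product — formally, one uses that $\tp(y/U)$ is recovered from $\tp(zy/U)$ with witness $z=z$, together with \Cref{fact-wortdomeq}(\ref{point:fwd1})), and the second is the hypothesis. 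One then has to verify that these dominations can be witnessed \emph{by a common extension of $r'$}, which is the delicate bookkeeping point: concretely, since all the relevant tuples live inside $\dcl(Ua)$ and $\dcl(Ub)$, the single type $\tp(a,b,c_0\dots c_m/U)$ simultaneously witnesses every domination in sight, because a witness of $p\doms q$ is just a consistent way of putting realisations of (restrictions of) $p$ and $q$ together inside one another, and we have exhibited all realisations inside $\dcl(Ua)$. I expect the termination argument in part \ref{rem:extrwit} to be the genuine content; part \ref{lemma:extrcompr} is then essentially a matter of carefully tracking that the witness can be chosen coherently, with the o-minimality/$\mathsf{NIP}$ completeness facts (\Cref{fact:wortsimon}, \Cref{fct:wort1tp}) doing the work of pinning down the relevant types as honest Morley products.
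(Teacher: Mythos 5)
Your Point~\ref{rem:extrwit} is essentially the paper's argument: realise each $q_i$ in $\dcl(Ua)$ via \Cref{remark:1dir}, use pairwise weak orthogonality together with \Cref{fact:wortsimon} to see that the resulting tuple genuinely realises the Morley product, and secure extremality by a maximality argument whose termination is guaranteed by the bound $\dim(a/U)$. One wrinkle: literally absorbing the offending $c$ into $y$ changes the target type to a strictly longer product, and the initial segment $b$ of the enlarged tuple is still non-extremal (the absorbed $c$ itself witnesses this), so the iteration as phrased never produces an extremal witness of $q\doms q_0\otimes\cdots\otimes q_n$. What is needed instead is to choose each $b_i$ from the outset as the leftmost coordinate of a maximal-length realisation of $q_i^{\otimes k}$ inside $\dcl(Ua)$ --- which is exactly what the paper does, and your dimension bound is precisely what guarantees that such a maximal $k$ exists.

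Point~\ref{lemma:extrcompr}, however, contains a genuine error. You pick $c_i\in\dcl(Ub)$ with $\tp(c_i/U)=p_i$ and claim that $\tp(c_0\ldots c_m, b/U)=(p_0\otimes\cdots\otimes p_m)(z)\otimes(q_0\otimes\cdots\otimes q_n)(y)$. This is false: in that Morley product the tuple $z$ must realise $(p_0\otimes\cdots\otimes p_m)\invext Ub$, and the invariant extension of a non-realised type over $Ub$ is never realised in $\dcl(Ub)$. Concretely, take $T=\doag$, $q=q_0=p_0=p_{+\infty}$ and $n=m=0$: then $(g(b_0),b_0)$ does not realise $q_0\otimes q_0$ for any definable $g$, and in fact no realisation of $q_0\otimes q_0$ lies in $\dcl(Ua)$ at all, since $\dim(a/U)=1$ while any realisation of $q_0\otimes q_0$ has dimension $2$ over $U$; yet $q\doms q_0\otimes q_0$ does hold by \Cref{fact:idemp}. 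The misconception --- repeated in your ``moreover'' paragraph, where you assert that a witness is ``just a consistent way of putting realisations inside one another'' and that you ``have exhibited all realisations inside $\dcl(Ua)$'' --- is that witnessing $p\doms q$ requires a realisation of $q$ inside $\dcl(Ua)$; that characterisation is \Cref{remark:1dir} and holds only for $1$-types $q$. The correct route, which is the paper's, is to first produce an extremal small-type witness of $(p_0\otimes\cdots\otimes p_m)\otimes(q_0\otimes\cdots\otimes q_n)\domeq q_0\otimes\cdots\otimes q_n$ using \Cref{fct:wort1tp}, the compatibility of $\otimes$ with domination witnessed by definable maps, commutation of weakly orthogonal types, and \Cref{fact:idemp}; the realisations of the extra factors then live outside $\dcl(Ua)$ and are pinned down by the small type rather than by definable functions of $a$. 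Composing that witness with $r$ gives the required $r'$.
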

\begin{proof} To prove Point~\ref{rem:extrwit}, fix $a\models q$. For $i\le n$, if $q_i^k$ is realised in $\dcl(Ua)$, then $k\le \dim(a/U)$. Moreover, $k\ge 1$ by \Cref{remark:1dir}. Hence, by taking a maximal such $k$ we find $b_i\models q_i$ in $\dcl(Ua)$ such that there is no $c\in \dcl(Ua)$ with $(b_i,c)\models q_i^2$. We repeat this for every $i\le n$, let $b\coloneqq (\bla b0,n)$, and observe that the type $r$ of $(a,b)$ over a suitable small set witnesses $q\doms \bla q0\otimes n$ by \Cref{fact:wortsimon}. If there was $c$ witnessing that $b$ is not extremal with respect to $U$ and $a$, then by \Cref{remark:1dir}, \Cref{fact:wortotimes}, and \Cref{fct:wort1tp} we would find $i\le n$ and a definable bijection such that $f(c)\models q_i$. We conclude by observing that $(b_i,f(c))\models q_i^2$, against the choice of $b_i$.

  For point~\ref{lemma:extrcompr}, recall that by \Cref{fct:wort1tp}, in an o-minimal theory, non-realised 1-types are either weakly orthogonal or in definable bijection. By~\cite[Corollary~1.24]{invbartheory} domination witnessed by definable functions is always compatible with $\otimes$. Using this, \Cref{fact:wortotimes}, and \Cref{fact-wortdomeq}(\ref{point:fwd5}), we find an extremal witness of
  \[
    \bla p0\otimes m\otimes \bla q0\otimes n\domeq q_{0}^{\ell_0}\otimes\ldots\otimes q_{n}^{\ell_n},
  \]
  for suitable $\ell_i$.
  \Cref{fact:idemp} tells us that for each $j$ there is an extremal witness of  $q_{i}^{\ell_i}\domeq q_i$, so we find an extremal witness $r''$ of
  \[
    \bla p0\otimes m\otimes \bla q0\otimes n\domeq \bla q0\otimes n.
  \]
From $r$ and $r''$ we easily construct the required $r'$.
\end{proof}

\begin{defin}
  Let $p(x)\in \invtypes(U)$, $A\subseteq U$, and let $\mathcal F_A^{p,1}$ be the set of $A$-definable functions with domain a set on which $p$ concentrates and codomain $U$. We define
  \[
    \pi_{p,A}(x)\coloneqq\bigcup_{f\in \mathcal F_A^{p,1}}\set{\phi(f(x))\in \mathcal L(U): p(x)\proves \phi(f(x))}.
  \]
\end{defin}

\begin{prop}\label{fact:1dimim}
Let $T$ be o-minimal and $p(x)\in \invtypes(U)$. Let $q(y)$ be any product of non-realised, invariant, pairwise weakly orthogonal $1$-types  maximal amongst those  realised in $\dcl(U a)$, for $a\models p$. For every small $A\subseteq U$ there is an extremal witness of $p\doms q$ such that $q(y)\cup r(x,y)\proves \pi_{p, A}(x)$.
\end{prop}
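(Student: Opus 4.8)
The plan is to start from an extremal witness $r_0(x,y)$ of $p\doms q$, whose existence is guaranteed by \Cref{lemma:1tp1at}(\ref{rem:extrwit}) applied to the (finitely many, up to domination-equivalence) pairwise weakly orthogonal non-realised $1$-types that are realised in $\dcl(Ua)$ for $a\models p$, and then to enlarge it so that it additionally pins down $\pi_{p,A}(x)$. The key observation is that $\pi_{p,A}(x)$ records, for each $A$-definable function $f$ with domain on which $p$ concentrates, the type $f_*p\in S(U)$; since $\monster$ is o-minimal, $f(a)$ is a tuple, and by o-minimality each of its coordinates $c$ either lies in $U$ (in which case it is harmless and already recorded by a formula over $A\subseteq U$) or realises a non-realised $1$-type over $U$. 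So $\pi_{p,A}(x)$ is, up to the $U$-definable information about the $f$'s, controlled by the family of non-realised $1$-types realised in $\dcl(Ua)$ — exactly the family $q$ is maximal amongst. Thus $q$ already "dominates all the one-dimensional content'' of every $f_*p$, and what remains is to connect $r_0$ to this.

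The main steps I would carry out are as follows. First, fix $a\models p$ and an extremal witness $r_0(x,y)$ of $p\doms q$ with $(a,b)\models p\cup r_0$. For each $f\in\mathcal F_A^{p,1}$, look at the tuple $f(a)$: discard coordinates in $U$ (they contribute only $\mathcal L(A)$-formulas, which $p(x)$ already entails, so they are in $p(x)$ and add nothing new), and for the remaining coordinates $c$ of $f(a)$, note $\tp(c/U)$ is a non-realised $1$-type realised in $\dcl(Ua)$, hence, by maximality of $q$ together with \Cref{fct:wort1tp} and \Cref{fact:wortotimes}, $c$ is in $U$-definable bijection with a coordinate "absorbed'' by $q$; more precisely, $q$ dominates each such $\tp(c/U)$. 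Second, using \Cref{remark:1dir} and \Cref{lemma:1tp1at}(\ref{lemma:extrcompr}) — the "moreover'' there handling the passage from $\doms$ to the mix of $\doms$ needed — extend $r_0$ to a type $r(x,y)$ over a larger small set that still witnesses $p\doms q$ and such that $b\in\dcl(Ua)$ still realises $q$, while now $r(x,y)$ also entails, for each relevant coordinate $c$ of each $f(a)$, the $\mathcal L(U)$-formula defining $c$ in terms of $b$ (such a formula exists because $c\in\dcl(Ub)$: $b$ realises a maximal such product, so after composing with a $U$-definable bijection $c$ is a coordinate of $b$, or definable from one). Collecting these finitely-generated-by-$f$ constraints over all $f\in\mathcal F_A^{p,1}$ gives a (small, since $|\mathcal F_A^{p,1}|\le|A|+|T|$ is small) type $r(x,y)$ with $q(y)\cup r(x,y)\proves \pi_{p,A}(x)$. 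Third, check $r$ is still extremal: extremality of $b$ with respect to $a$ and $U$ is a property of $b$, $a$, $U$ alone, unchanged by enlarging the base of $r$ from $r_0$ to $r$ — so extremality is inherited directly from $r_0$, provided we keep $\tau$ with $r\proves y=\tau(x)$, which we do since all added formulas are in $\mathcal L(U)$ and consistent with $(a,b)$.

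The step I expect to be the main obstacle is the second one: making precise how $q$'s maximality lets us re-express each non-$U$ coordinate $c$ of each $f(a)$ in terms of $b$ \emph{by an $\mathcal L(U)$-formula that can be consistently added to $r_0$}. The subtlety is that maximality of $q$ gives $q\doms\tp(c/U)$, i.e.\ $c\in\dcl(Ub')$ for \emph{some} $b'\models q$ in $\dcl(Ua)$, but we have a \emph{fixed} $b$ from $r_0$; one must argue that by \Cref{fact:idemp} and \Cref{lemma:1tp1at}(\ref{lemma:extrcompr}) (the idempotency $q\domeq q\otimes q$ and compatibility of function-witnessed domination with $\otimes$) we may absorb $b'$ into $b$ without disturbing extremality. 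Once that bookkeeping is in place, the rest is routine compactness and the o-minimal $1$-type dichotomy \Cref{fct:wort1tp}.
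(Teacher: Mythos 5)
Your overall strategy --- start from an extremal witness of $p\doms q$ supplied by \Cref{lemma:1tp1at}(\ref{rem:extrwit}), enlarge it so that it also decides $\pi_{p,A}$, and observe that extremality is preserved under consistent enlargement --- is the same as the paper's, and your first and third steps are fine. The gap is in the second step, precisely at the point you flag as the main obstacle: the claim that each non-realised $c=f(a)$ lies in $\dcl(Ub)$ for the \emph{fixed} $b\models q$ of the extremal witness is false, and the proposed repair does not engage with the actual difficulty. Take $p=p_{+\infty}(x_0)\otimes p_{+\infty}(x_1)$, so that $q=p_{+\infty}(y)$ is a single factor (the factors of $q$ must be pairwise weakly orthogonal, and $p_{+\infty}\nwort p_{+\infty}$) and the extremal witness contains $y=x_1$; for $f$ the first projection, $c=a_0\notin\dcl(Ua_1)=\dcl(Ub)$. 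The point is that $q$ collapses multiplicities, whereas $\dcl(Ua)$ may contain several independent realisations of the same $1$-type. Knowing $q\doms\tp(c/U)$ only guarantees that \emph{some} realisation of $\tp(c/U)$ is definable over $Ub$ together with a small set, not that $c$ is, and $q(y)\cup r(x,y)\proves\pi_{p,A}(x)$ requires deciding the actual cut of $f(a)$ over all of $U$ from a small type plus $q$.

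The repair the paper uses is: (i) pass from $q(y)$ to a product $q'(z)\otimes q(y)$ that is maximal among \emph{all} products of non-realised invariant $1$-types realised in $\dcl(Ua)$, repeated factors allowed (such a product exists by maximality of $q$ and \Cref{fct:wort1tp}); (ii) invoke \cite[Proposition~2.18]{dominomin}, together with extremality, to extend the extremal witness to a small $r''(x,y,z)$ with $(q'(z)\otimes q(y))\cup r''(x,y,z)\proves\pi_{p,A}(x)$ --- this entailment is genuinely non-trivial and is not a formal consequence of ``each relevant $1$-type is dominated by $q$'', which is all your maximality argument yields; and (iii) compress back down to $q$ via \Cref{lemma:1tp1at}(\ref{lemma:extrcompr}) and by existentially quantifying out $z$, using that $\pi_{p,A}(x)$ does not mention $z$. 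Your ``absorb $b'$ into $b$ by idempotency'' gestures at (i) and (iii), but it supplies neither the construction of the enlarged invariant product nor the key entailment in (ii); as written, the argument does not establish $q(y)\cup r(x,y)\proves\pi_{p,A}(x)$.
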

\begin{proof}
  Write $q=\bla q0\otimes k$.
By \Cref{lemma:1tp1at}(\ref{rem:extrwit}), there is an extremal witness $r'(x,y)$ of $p\doms q$, say containing the formula $y=\tau(x)$. Let $q'(z)=\bla q{i_0}\otimes{i_n}$ be such that $q'(z)\otimes q(y)$ is maximal amongst the products of non-realised invariant $1$-types realised in $\dcl(\monster a)$. Such a $q'$ exists by maximality of $q$ and \Cref{fct:wort1tp}.

By \cite[Proposition~2.18]{dominomin} and extremality,  $r'$ extends to some small type $r''(x,y,z)$ such that $(q'(z)\otimes q(y))\cup r''(x,y,z)\proves \pi_{p,A}(x)$. 

 We apply \Cref{lemma:1tp1at}(\ref{lemma:extrcompr}), and by discarding the coordinates in $z$ by quantifying over them as in~\cite[Lemma~2.1.10]{mennuniInvariantTypesModel2020a}, we obtain $r''(x,y)$ such that $q(y)\cup r(x,y) \proves \exists z\;((q'(z)\otimes q(y))\cup r''(x,y,z))$. It follows that $q(y)\cup r(x,y)\proves \pi_{p, A}(x)$, as desired. Moreover, by construction $r(x,y)$ still contains $y=\tau(x)$, hence is still an extremal witness of $p\doms q$.\end{proof}

\subsection{Linear o-minimal theories}
\begin{thm}\label{lineardec}
  Let $T$ be a linear expansion of $\mathsf{DOAG}$.
    Then in $T$ every invariant type is domination-equivalent to a product of $1$-types. More precisely, let $p\in \invtypes(U)$, $a\models p$,  and $\bla p0,k\in \invtypes_1(U)$ any maximal tuple of pairwise weakly orthogonal, non-realised invariant $1$-types realised in $\dcl(Ua)$. Then there is an extremal witness of $p\domeq \bla p0\otimes k$.
\end{thm}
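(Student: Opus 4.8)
The plan is to reduce the general invariant type $p$ to a product of $1$-types by exploiting linearity, which forces every definable function to be, locally, affine with coefficients among the $\es$-definable partial endomorphisms in $\Lambda$. Concretely, fix $a\models p$ with $\dim(a/U)=n$ and proceed by induction on $n$. For $n=0$ there is nothing to do. For the inductive step, let $\bla p0,k$ be a maximal tuple of pairwise weakly orthogonal, non-realised invariant $1$-types realised in $\dcl(Ua)$, and let $r(x,y)$ be the extremal witness of $p\doms \bla p0\otimes k$ furnished by \Cref{lemma:1tp1at}(\ref{rem:extrwit}) (applicable since, by \Cref{remark:1dir}, each $p_i$ is realised in $\dcl(Ua)$), so $r(x,y)\proves y=\tau(x)$ for some definable $\tau$. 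It remains to show $\bla p0\otimes k\doms p$, i.e.\ that $p$ is realised in $\dcl(Ub)$ for $b\models \bla p0\otimes k\cup r$; equivalently, that every coordinate of $a$ lies in $\dcl(Ub)$.

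First I would record the key algebraic input: in a linear o-minimal expansion of $\mathsf{DOAG}$, any $U$-definable function $g$ agrees, on a neighbourhood of any point in an appropriate $U$-definable cell, with a function of the form $t\mapsto u+\sum_j \lambda_j(t_j)$ with $u\in U$ and $\lambda_j\in\Lambda$ (this is the Linear Cell Decomposition / local affineness of~\cite{lp}, which I would cite). Apply this to the coordinate functions expressing $a$ in terms of a maximal $\dcl(U)$-independent subtuple $a'$ of $a$: each coordinate of $a$ is $U$-affine in $a'$. Now choose the weakly orthogonal $1$-types carefully — take $p_i\coloneqq \tp(a'_i/U)$ for the coordinates of $a'$, after possibly replacing $a'$ by an interdefinable tuple realising pairwise weakly orthogonal types, which is possible by \Cref{fct:wort1tp} (two non-weakly-orthogonal $1$-types are in $U$-definable bijection, so one can collapse the redundancy). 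Then $\tp(a'/U)\domeq \bla p0\otimes k$ by \Cref{fact:wortsimon}, and since each coordinate of $a$ is $U$-affine in $a'$, we get $a\in\dcl(Ua')$; conversely $a'$ is a subtuple of $a$, so $\dcl(Ua)=\dcl(Ua')$ and hence $p\domeq \tp(a'/U)\domeq \bla p0\otimes k$.

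The remaining point is to upgrade domination-equivalence to the statement with an \emph{extremal} witness, and to match the $\bla p0,k$ produced above with an \emph{arbitrary} maximal weakly orthogonal tuple realised in $\dcl(Ua)$: any two such maximal tuples have the same length and pairwise-matching domination classes by \Cref{fct:wort1tp} and a back-and-forth/exchange argument using \Cref{fact:wortotimes} and \Cref{fact:idemp}. Given this, \Cref{lemma:1tp1at}(\ref{lemma:extrcompr}) (its $\domeq$ version) lets us convert the extremal witness of $p\domeq \bla p0\otimes k$ for our chosen tuple into an extremal witness for any prescribed maximal tuple. So the final write-up is: (i) reduce to showing each coordinate of $a$ is in $\dcl(U a')$ for a suitable independent subtuple $a'$; (ii) invoke linear cell decomposition to see the coordinates of $a$ are $U$-affine in $a'$; (iii) arrange $\tp(a'/U)$ to be a product of pairwise weakly orthogonal $1$-types via \Cref{fct:wort1tp}; (iv) conclude $p\domeq\bla p0\otimes k$ and promote to an extremal witness via \Cref{lemma:1tp1at}.

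The main obstacle I anticipate is step (ii)–(iii): extracting from linearity a genuinely \emph{global} (on the locus of $p$) affine representation of $a$ in terms of a well-chosen independent subtuple, rather than a merely piecewise one, and ensuring the subtuple can be taken to realise pairwise weakly orthogonal $1$-types without destroying the interdefinability with $a$. Handling the $\Lambda$-coefficients is harmless since they are $\es$-definable endomorphisms, but one must be careful that the affine pieces glue across cells on the (type-definable) locus of $p$ — here the fact that $p$ concentrates on a single cell of any given decomposition, together with invariance, should make the chosen piece canonical. Everything else is bookkeeping with the facts already established in \Cref{sec:domination}.
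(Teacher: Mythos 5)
Your reduction to a $\dcl$-independent subtuple $a'$ of $a$ with $\dcl(Ua)=\dcl(Ua')$ is fine and matches the paper's first step (it needs only exchange for $\dcl$, not linear cell decomposition, so the gluing worry in your last paragraph is a red herring). The proof breaks at the hard direction $\bla p0\otimes k\doms p$, in two places. First, domination of $p$ by $\bla p0\otimes k$ is \emph{not} equivalent to $p$ being realised in $\dcl(Ub)$ for $b\models \bla p0\otimes k$: already in $\doag$, $p_{+\infty}\otimes p_{+\infty}\domeq p_{+\infty}$ by \Cref{fact:idemp}, yet no realisation of $p_{+\infty}^{\otimes 2}$ lies in $\dcl(Ub)$ for a single $b$, by a dimension count; the witness $r$ is allowed to supply realisations outside $\dcl(Ub)$. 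So the goal you set yourself ("every coordinate of $a$ lies in $\dcl(Ub)$") is typically false whenever $k+1<\dim(a/U)$. Second, the "collapse the redundancy" move is wrong: if two coordinates $a'_0,a'_1$ of the independent tuple realise $1$-types in $U$-definable bijection, \Cref{fct:wort1tp} gives a bijection between the \emph{types}, not one carrying $a'_0$ to $a'_1$; the pair $(a'_0,a'_1)$ has $\dcl$-dimension $2$ over $U$ and cannot be replaced by a shorter interdefinable tuple. In that situation \Cref{fact:wortsimon} does not apply, $\tp(a'/U)$ is not determined by the coordinate $1$-types (e.g.\ it depends on the cut of $a'_1-a'_0$), and this is exactly the case carrying the content of the theorem.

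What is missing is a mechanism showing that the full type of a $U$-independent tuple is recoverable from one-dimensional data. The paper does this by combining \Cref{fact:1dimim} (which produces an extremal witness $r$ with $(\bla p0\otimes k)(y)\cup r(x,y)\proves \pi_{p,N}(x)$ for a suitable small model $N$) with quantifier elimination for linear structures \cite[Corollary~6.3]{lp} and an induction on terms: for each term $t(x,w)$ and parameter $d\in U$, replacing $d$ by an $A_0$-conjugate $\tilde d$ in $N$ changes $t(x,d)$ by an element of $U$ (the only delicate case being partial endomorphisms, handled using invariance and the symmetry of their domains), so $\pi_{p,N}$ decides the cut of every $t(x,d)$ and hence entails $p$. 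This is where linearity is genuinely used, and nothing in your outline substitutes for it; without it, $\bla p0\otimes k\doms p$ is unproven.
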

\begin{proof}
  By~\cite[Corollary~6.3]{lp} $T$ has quantifier elimination in the language $\mathcal L$ consisting of the ordered group language, constants for all $\emptyset$-definable   points, and function symbols for all partial endomorphisms, set equal to $0$ outside their domain. 

It suffices to deal with the case of a global $A_0$-invariant type $p(x)$ of a $U$-independent tuple. Let $\mathcal N\prec \monster$ be $\abs{A_0}^+$-saturated. Apply \Cref{fact:1dimim} with $A=N$, and work in the notation of the latter. We show that $\pi_{p,N}(x)\proves p(x)$.

By quantifier elimination in $\mathcal L$, and the fact that $p$ is the type of a $U$-independent tuple, this amounts to showing that, whenever $t(x,w)$ is an $\mathcal L$-term, $d$ is a tuple from $U$, and $p(x)\proves t(x,d)>0$, then $\pi_{p,N}(x)\proves t(x,d)>0$. We show the (seemingly) stronger statement that $\pi_{p,N}(x)$ decides the cut in $U$ of $t(x,d)$.

  To this end we prove, by induction on $t(x,w)$, that if $\tilde d\in N$ is such that $\tilde d\equiv_{A_0} d$ then there is $e\in U$ such that $\pi_{p,N}(x)\proves t(x,d)=t(x,\tilde d)+e$. This suffices because, since $\tilde d\in N$, by definition $\pi_{p,N}(x)$ decides the cut of $t(x, \tilde d)$ in $U$.

 When $t(x,w)$ is a single variable, or a constant, the conclusion is clear since, as usual, we are working modulo the elementary diagram of $\monster$.  If $t(x,y) = t_0 (x,w)+t_1(x,w)$, assume that, for $i<2$, the conclusion holds for $t_i(x,w)$, witnessed by $e_i\in U$ such that
  \[
    \pi_{p,N}(x)\proves t_i(x,d)-t_i(x,\tilde d)=e_i.
  \]
  Then
  \[
    \pi_{p,N}(x)\proves t(x,d)-t(x,\tilde d)=t_0(x,d)-t_0(x,\tilde d)+t_1(x,d)-t_1(x,\tilde d)=e_0+e_1,
  \]
  and the conclusion follows with $e\coloneqq e_0+e_1$. The case where $t(x,w)=-t_0(x,w)$ is also easy, so we are left to deal with the case $t(x,w)=f(t_0(x,w))$, with $f$ a partial endomorphism.  Inductively, there is $e\in U$ such that $\pi_{p,N}(x)\proves t_0(x,d)=t_0(x,\tilde d)+e$.  Therefore, $\pi_{p,N}(x)$ decides the type of $t_0(x,d)$, and in particular it decides whether it is in $\dom f$. Since $\tilde d\equiv_{A_0} d$, and $p$ is $A_0$-invariant, we must have $p\proves t_0(x,d)\in \dom f\iff p\proves t_0(x,\tilde d)\in \dom f$.  We have two cases.
  \begin{enumerate}
  \item $\pi_{p,N}(x)\proves t_0(x,\tilde d)\notin \dom f$ (hence also $\pi_{p,N}(x)\proves t_0(x,d)\notin \dom f$). Then $f(t_0(x,d))-f(t_0(x,\tilde d))=0-0=0\in U$
  \item $\pi_{p,N}(x)\proves t_0(x,\tilde d)\in \dom f$  (hence also $\pi_{p,N}(x)\proves t_0(x, d)\in \dom f$). By invariance, $t_0(x,\tilde d)$ and $t_0(x,d)$ have the same sign, and it follows that $\pi_{p,N}(x)\proves\abs e=\abs{t_0(x,d)-t_0(x,\tilde d)}\le \max\set{\abs{t_0(x,d)}, \abs{t_0(x,\tilde d)}}$. Since $\dom f$ is a symmetric interval centred at $0$, this proves $e\in \dom f$, and the conclusion follows easily by writing $f(t_0(x,d))=f(t_0(x,\tilde d))+f(e)$.\qedhere
  \end{enumerate}
\end{proof}
\begin{cor}\label{cor:linearinvtilde}
  If $T$ is an o-minimal, linear expansion of $\mathsf{DOAG}$, then $\invtilde$ is a monoid and isomorphic to $\mathscr P_{<\omega}(X(\monster), \cup, \supseteq)$, where $X(\monster)$ is the set of non-realised invariant $1$-types modulo definable bijection.
\end{cor}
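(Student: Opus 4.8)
The plan is to obtain the corollary immediately by combining Theorem~\ref{lineardec} with Fact~\ref{fact:redto1tps}(1). The whole content of the statement already resides in Theorem~\ref{lineardec}, which asserts that in any linear expansion of $\mathsf{DOAG}$ every invariant type $p\in\invtypes(U)$ is domination-equivalent to a product of non-realised invariant $1$-types. (The refinement in Theorem~\ref{lineardec} producing an \emph{extremal} witness of $p\domeq \bla p0\otimes k$ is not needed here; only the bare domination-equivalence is.) This is precisely the hypothesis under which Fact~\ref{fact:redto1tps}(1) applies.

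More concretely, first I would note that a linear expansion of $\mathsf{DOAG}$ is in particular o-minimal, since linearity is only defined for o-minimal expansions of ordered groups and $\mathsf{DOAG}$ itself is o-minimal; thus the standing hypothesis ``$T$ o-minimal'' of Fact~\ref{fact:redto1tps} is met. Then, applying Theorem~\ref{lineardec}, the antecedent of Fact~\ref{fact:redto1tps}(1) holds: every $p\in\invtypes(U)$ is domination-equivalent to a product of $1$-types. The conclusion of that fact then gives at once that $\otimes$ and $\doms$ are compatible, hence $(\invtilde,\doms,\otimes)$ is a partially ordered monoid (i.e.\ $\invtilde$ is a monoid), and that there is an isomorphism
\[
  (\invtilde, \otimes, \doms, \wort)\;\cong\;(\mathscr P_{<\omega}(X(\monster)), \cup, \supseteq, D),
\]
where $X(\monster)$ is the set of non-realised invariant $1$-types modulo definable bijection and $D$ is the disjointness relation on finite sets. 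Discarding the last coordinate $\wort\leftrightarrow D$ (or retaining it, as one wishes) yields exactly the statement of the corollary.

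I do not expect any genuine obstacle: the proof is purely a matter of citing the two previous results and checking that their hypotheses line up, which they do trivially. If one preferred a self-contained argument, one could instead re-derive the identification of $\invtilde$ with $(\mathscr P_{<\omega}(X(\monster)), \cup, \supseteq)$ from scratch using Fact~\ref{fct:wort1tp} (two non-realised $1$-types are either weakly orthogonal or in definable bijection), together with Facts~\ref{fact:wortotimes} and~\ref{fact:wortsimon} to handle products and completeness, and Fact~\ref{fact-wortdomeq} to see that $\doms$ interacts correctly with $\otimes$ on such products; but this merely reproves Fact~\ref{fact:redto1tps}(1), so invoking it directly is cleaner.
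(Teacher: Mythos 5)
Your proposal is correct and matches the paper's own proof, which likewise derives the corollary by combining \Cref{lineardec} with \Cref{fact:redto1tps} (the paper also cites \Cref{fct:wort1tp}, but only to identify the classes of $1$-types up to definable bijection, which you handle implicitly). No gaps.
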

\begin{proof}
By  \Cref{lineardec} and \Cref{fact:redto1tps,fct:wort1tp}.
\end{proof}

\section{Cofinal curves and definable types}\label{sec:cofcurvesdeftypes}

Throughout this section, we fix an o-minimal $\mathcal L$-theory $T$ expanding the theory of dense linear orders without endpoints, and a monster model $\monster$. We refer indistinctively to collections of formulas and the sets that they define in $\monster$. We use $p|^+_\phi$ for the restriction of a type $p(x)$ to `positive instances' of a formula $\phi(x,y)$, that is, $p|^+_\phi$ denotes the family of all formulas of the form $\phi(x,b)$, with $b\in U^{\abs y}$, in $p(x)$.

Recall that a \emph{preorder} is a transitive and reflexive relation. We call a preordered set $(W,\preceq)$ \emph{definable} if the preorder $\preceq$ is definable (in which case, by reflexivity, $W$ is definable too). A preordered set $(W,\preceq)$ is \emph{downward directed} if, for every $x,y\in W$, there exists $z\in W$ such that $z \preceq \{x, y\}$; equivalently, for every finite subset $W_0\subseteq W$, there is $z\in W$ with $z \preceq W_0$. A family of sets (respectively, formulas) is downward directed if it is downward directed with respect to the inclusion (respectively, implication) preorder.    

\begin{fact}[\cite{guerrero_compactness}, Proposition 4.1]\label{fact:refinement}
Let $p\in S_x(U)$. For every formula $\phi(x,y)$ there exists another formula $\psi(x,z)$ such that $p|^+_\psi$ is downward directed and $p|^+_\psi \vdash p|^+_\phi$.
\end{fact}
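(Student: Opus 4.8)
The plan is to produce $\psi$ so that $p|^+_\psi$ is a uniformly definable, downward directed family of "neighbourhoods of $p$" that refines $p|^+_\phi$ (meaning $p|^+_\psi$ is downward directed and $p|^+_\psi\vdash p|^+_\phi$), using o-minimal cell decomposition together with the completeness of $p$.

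First I would record two easy reductions. (a) If $p$ is realised, say by $a$, then $\psi(x,z):=\bigwedge_i x_i=z_i$ works, so assume $p$ is non-realised. (b) \emph{Amalgamation and transitivity:} if $\psi_1,\psi_2$ refine $\phi_1,\phi_2$ respectively with respect to $p$, then $\psi(x,z_1,z_2):=\psi_1(x,z_1)\wedge\psi_2(x,z_2)$ refines $\phi_1\wedge\phi_2$, since the product of two downward directed families is again downward directed; and if $\psi'$ refines $\psi$ and $\psi$ refines $\phi$, then $\psi'$ refines $\phi$. Using (b) together with uniform (parametrised) cell decomposition — decompose $\phi(\monster,b)$ uniformly in $b$ into boundedly many cells $\theta(\monster,g_l(b))$ with $g_l$ being $\emptyset$-definable, noting that $p$, being complete, concentrates on exactly one of them and that $\theta(x,g_l(b))\vdash\phi(x,b)$ — it suffices to treat the case where $\phi$ itself defines a $\emptyset$-definable family of cells.

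Now induct on $\abs x$. For $\abs x=1$ the cells $\phi(\monster,b)$ are points (never in $p$) or intervals/rays, and a single interval formula $z_1<x<z_2$, or a ray formula $x>z$ or $x<z$ according to the nature of the cut defining $p$, makes $p|^+_\psi$ downward directed and refining $p|^+_\phi$, because each $\phi(\monster,b)\in p$, being a finite union of intervals and points containing a \emph{non-realised} cut, contains such a neighbourhood of the cut. For $\abs x>1$, write $x=(x',x_{\abs x})$, let $p'\in S_{x'}(U)$ be the projection of $p$, and present each cell $\phi(\monster,b)$ over an $(\abs x-1)$-variable cell $\phi'(\monster,b)$, either as a graph $x_{\abs x}=h(x',b)$ or as a band $f(x',b)<x_{\abs x}<g(x',b)$ with $h,f,g$ $\emptyset$-definable. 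In the graph case I apply the inductive hypothesis to $p'$ and to the formulas $\phi'(x',y)$ and $h(x',y)=h(x',y')$, amalgamate into some $\psi'(x',z')$, and set $\psi(x,z',z''):=\psi'(x',z')\wedge x_{\abs x}=h(x',z'')$; using amalgamation and the fact that $p'\vdash h(x',b)=h(x',b')$ whenever both graphs lie in $p$, one checks $p|^+_\psi$ is downward directed and refines $p|^+_\phi$. In the band case I apply the inductive hypothesis to $p'$ (for $\phi'$ and auxiliary formulas built from $f,g$) and combine it with a family of sub-bands of the $\phi$-bands, designed to be a fibred, uniformly definable, directed neighbourhood basis of the cut realised by $x_{\abs x}$ over $Ux'$.

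The hard part will be precisely this last ingredient. Unlike in dimension one, the intersection of two $\phi$-bands has bounds $\max(f(\cdot,b),f(\cdot,b'))$ and $\min(g(\cdot,b),g(\cdot,b'))$, which need not define a member of our family, so directedness of the sub-band family is not formal. One must analyse the cut realised by $x_{\abs x}$ over $Ux'$ for $(x',x_{\abs x})\models p$ — distinguishing whether it sits "next to" one of the bounding functions or is genuinely internal and approached by elements of $U$ — and, likely after a further cell decomposition of the data $f,g$, choose the sub-band family and the auxiliary formulas fed to the inductive hypothesis so that amalgamation with the inductively obtained directedness on the $x'$-side yields a downward directed $p|^+_\psi$. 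This is the point where the o-minimal "cofinal curve" phenomenon enters: such a cut admits a definable, directed coinitial/cofinal approximation which can be transported uniformly over the $x'$-fibres. Everything else is bookkeeping — assembling the finitely many cell-shapes and the branches from the cell-decomposition reduction into a single formula $\psi$, which is legitimate because $p$ concentrates on exactly one branch.
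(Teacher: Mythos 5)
The paper does not actually prove \Cref{fact:refinement}; it quotes it from \cite[Proposition~4.1]{guerrero_compactness}, so the comparison is with the cited source rather than with an in-paper argument. Your overall strategy --- reduce to uniformly definable families of cells via parametrised cell decomposition, amalgamate refinements by conjunction, and induct on $\abs{x}$ with the graph/band dichotomy in the last coordinate --- is the natural one and is essentially the route taken in the source. The reductions (a), (b), the base case $\abs{x}=1$, and the graph case are all fine.

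There is, however, a genuine gap exactly where you flag one: the band case. You correctly observe that the family of sub-bands $f(x',c)<x_{\abs{x}}<g(x',c)$ is not formally directed, since $\max(f(\cdot,c),f(\cdot,c'))$ need not again be a member of the family, but you then defer the resolution to ``a further cell decomposition'' and to ``the o-minimal cofinal curve phenomenon''. The latter is not available at this level of generality: \Cref{fact:cof-curves} requires $T$ to expand \rcf{} (it rests on definable choice), whereas \Cref{fact:refinement} is asserted for every o-minimal expansion of a dense linear order. The ingredient that actually closes the gap is more elementary: for any two bounding functions $h(x',c)$ and $h(x',c')$ of instances in $p$, the complete type $p'$ of the base decides the definable condition $h(x',c)\le h(x',c')$, so on a sufficiently small base cell one of the two dominates the other pointwise. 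Feeding these comparison formulas (as a single formula in the pair of parameters) into the inductive hypothesis for $p'$, and letting the parameter tuple $z$ of $\psi$ carry the base parameter and the two bound parameters independently, one then checks that any two instances of $\psi$ lying in $p$ contain a common third instance. As written, your argument identifies this difficulty but does not resolve it, so it is an outline with a missing key step rather than a complete proof.
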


\begin{fact}[\cite{andujarguerreroDirectedSetsTopological2021}, Corollary 25]\label{fact:cof-curves}
Suppose that $T$ expands \rcf. Let $(W,\preceq)$ be a definable downward directed preordered set. Then there exists a definable curve $\gamma:(0,\infty)\rightarrow W$ satisfying that, for every $b\in W$, there exists some $c \in (0,\infty)$ such that, for every $d \in (0,c)$, we have  $\gamma(d) \preceq b$. 
\end{fact}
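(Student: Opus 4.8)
The plan is to reduce the statement to a curve-selection principle for downward directed families of definable \emph{sets}, retaining the extra information that those sets are downward closed for a preorder, and then to argue by induction on dimension in the style of the classical curve selection lemma.

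First I would pass from $\preceq$ to its lower sets. Realising $W$ as a definable subset of some $U^n$, put $D_b\coloneqq\set{w\in W:w\preceq b}$ for $b\in W$. Then $\set{D_b:b\in W}$ is a family of nonempty definable sets, each downward closed for $\preceq$, directed by reverse inclusion (this is exactly downward directedness of $(W,\preceq)$); and any definable curve $\gamma\colon(0,\infty)\to U^n$ which \emph{eventually enters every $D_b$} — meaning that for each $b$ there is $c>0$ with $\gamma((0,c))\subseteq D_b$ — satisfies $\gamma(d)\preceq b$ for all small $d$, which is what is wanted (modify $\gamma$ away from $0$, harmlessly, so that it is $W$-valued everywhere). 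So it suffices to produce such a $\gamma$. I would argue by induction on $n$, with a secondary induction on $\dim W$. If $\dim W=0$ then $W$ is finite and a finite downward directed preorder has a least element, so a constant curve works. If $\dim W>0$, set $\Sigma(x)\coloneqq\set{x\preceq b:b\in W}\cup\set{x\in W}$, which is consistent precisely because $(W,\preceq)$ is downward directed; fix a cell decomposition of $U^n$, let $C$ be a cell on which $\Sigma$ concentrates, and replace $W$ by $W\cap C$ and $\preceq$ by its restriction — one checks, using that $\Sigma$ concentrates on $C$, that $(W\cap C,\preceq)$ is again downward directed. Transporting along a definable homeomorphism, we may assume $W=(0,1)^m$. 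Projecting to the last coordinate, $\pi\colon(0,1)^m\to(0,1)$, the push-forward $\pi_*\Sigma$ is a partial type in one variable over $U$; the crucial point is that it is \emph{definable}, hence, in an o-minimal expansion of \rcf, traced by a definable curve $\gamma_m\colon(0,c)\to(0,1)$ (that is, $\psi(x_m)\in\pi_*\Sigma$ iff $\gamma_m(t)\in\psi$ for all small $t$). Granting this, one applies a uniform-in-the-last-coordinate version of the inductive hypothesis — available via cell decomposition and definable choice — to the fibre families, obtaining a definable assignment of a fibrewise curve to each level near the trace of $\gamma_m$, and then concatenates and reparametrises these using the real closed field to obtain a single definable curve eventually in every $D_b$. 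Undoing the reductions and reparametrising the domain to $(0,\infty)$ finishes the argument.

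The main obstacle is the inductive step, and it has two genuinely substantial sub-points. The first is the claim that $\pi_*\Sigma$ is definable: a priori it could be a ``gap'' cut of $U$, which no definable curve can trace, and ruling this out is exactly where it matters that the $D_b$ arise from a \emph{preorder} — so that they are downward closed and coherently nested — rather than forming an arbitrary downward directed family, for which the conclusion genuinely fails. The second is the assembly of the fibrewise curves into a single definable curve: this needs a uniform formulation of the statement along a definable family of fibres together with a definable concatenation, and it is here that the full strength of the hypothesis that $\monster$ expands a real closed field (as opposed to being merely o-minimal, or expanding an ordered group) is used.
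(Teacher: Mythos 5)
First, a point of order: the paper does not prove this statement --- it is quoted verbatim from \cite{andujarguerreroDirectedSetsTopological2021}, Corollary~25, and used as a black box --- so the comparison below is with the mathematics itself rather than with an internal argument.

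Your induction does not close, and the gap is precisely in the step you describe as ``applying a uniform version of the inductive hypothesis to the fibre families''. After projecting to the last coordinate, the objects you must control on a fibre $W_u=\pi^{-1}(u)\cap W$ are the traces $D_b\cap W_u$ for $b$ ranging over \emph{all} of $W$. For fixed $u$ these are not downward directed: directedness of $(W,\preceq)$ gives, for $b,b'$, some $z$ with $D_z\subseteq D_b\cap D_{b'}$, but nothing forces $D_z$ to meet the fibre $W_u$ --- in general $\pi(D_b\cap W)$ only contains an interval $(s,s+\epsilon_b)$ with $\epsilon_b$ depending on $b$, and $\epsilon_z$ may be far smaller than $\epsilon_b,\epsilon_{b'}$. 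Nor do the traces arise from any preorder on $W_u$. So the inductive hypothesis, in either formulation, does not apply to the fibres. Moreover, even granting fibrewise curves, the ``concatenate and reparametrise'' step is where the entire content of the theorem lives: for each $b$ one must choose a threshold in $u$ \emph{and} a threshold in the fibre parameter, and arranging these choices definably and coherently in $b$ is exactly the diagonalisation the theorem asserts; it does not follow from cell decomposition and definable choice.

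Two of your framing claims are also incorrect, which matters because they are what you lean on at the crucial points. The conclusion does \emph{not} ``genuinely fail'' for an arbitrary definable downward directed family of nonempty sets $\set{X_w}_{w\in W}$: by \Cref{rem:inclusion-preorder} such a family induces a definable downward directed preorder on $W$, and composing a cofinal curve there with a definable choice function $w\mapsto\sigma(w)\in X_w$ yields a definable curve eventually inside every member. So the preorder and set-family formulations are equivalent, and your proposed reason why $\pi_*\Sigma$ is tame cannot be the right one. (That particular step is in fact fine, but for a different reason: in one variable, o-minimality alone guarantees that a definable downward directed family of subsets of the line converges to a definable $1$-type, since definable subsets of the line have suprema in $U\cup\set{\pm\infty}$.) Finally, be aware that the route actually taken in \cite{andujarguerreroDirectedSetsTopological2021} --- first producing a definable global type extending $\Sigma$ --- still requires a genuine further argument to extract a curve, since not every definable type in an o-minimal expansion of a field is the limit type of a definable curve (e.g.\ $p_{0^+}\otimes p_{0^+}$ in $\rcf$ is not). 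The statement is not a soft consequence of curve selection, and your sketch does not supply the missing mechanism.
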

Following terminology from~\cite{andujarguerreroDirectedSetsTopological2021}, we call a definable curve $\gamma$ as above a \emph{cofinal curve} in $(W,\preceq)$. 
\begin{remark}\label{rem:cofinal-parameters}
In \Cref{fact:cof-curves}, if $(W,\preceq)$ is $\mathcal L(A)$-definable, for some $A\subseteq U$, then $\gamma$ can be chosen $A$-definable too.
Indeed, suppose $\phi(x,y,d)$ defines the graph of a cofinal curve $\gamma(x)=y$ in $(W,\preceq)$. Then the set
\[S=\{t: \phi(x,y,t) \text{ defines a cofinal curve in }(W,\preceq)\}\]
is non-empty and $A$-definable. By definable choice, pick $a\in S\cap \dcl(A).$ Then $\phi(x,y,a)$ defines the desired $A$-definable cofinal curve.
\end{remark}
\begin{remark}\label{rem:inclusion-preorder}
Let $\mathcal{X}=\{X_a : a\in W\}$ be a family of sets. Consider the relation $\preceq$ on $W$ given by $a \preceq b$ whenever $X_a \subseteq X_b$. This is clearly a preorder on $W$. Observe that, if $\mathcal{X}$ is definable, then $\preceq$ is definable (over the same parameters). Clearly $(W,\preceq)$ is a downward directed set if and only if $\mathcal{X}$ is a downward directed family of sets.
\end{remark}

\begin{lemma}\label{lemma:dd1parax}
Suppose that $T$ expands \rcf. Let $A\subseteq U$ and $p\in \deftypes_x(U)$ be an $A$-definable type. For every $\mathcal L$-formula $\phi(x,y)$ there exists an $\mathcal L(A)$-formula $\psi(x,t)$, with $\abs{t}=1$, satisfying that
\begin{enumerate}
    \item\label{point:dd1parax1} $p|^+_\psi=\{\psi(x,c) : c>0\}$,
    \item\label{point:dd1parax2} for every $\phi(x,b)\in p|^+_\phi$ there exists some $c>0$ such that for every $d\in (0,c)$ we have  $\psi(x,d)\vdash \phi(x,b)$. 
\end{enumerate}
In particular $p|^+_\psi \vdash p|^+_\phi$. 
\end{lemma}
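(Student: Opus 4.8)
The plan is to combine the refinement result \Cref{fact:refinement} with the cofinal curve machinery \Cref{fact:cof-curves}, using the fact that $p$ is $A$-definable to control parameters throughout. First I would apply \Cref{fact:refinement} to the formula $\phi(x,y)$ to obtain a formula $\chi(x,z)$ such that $p|^+_\chi$ is downward directed and $p|^+_\chi\vdash p|^+_\phi$. Since $p$ is $A$-definable, the set $W\coloneqq\set{z : \chi(x,z)\in p(x)}$ is $\mathcal L(A)$-definable (it is defined by $d_p\chi$, a formula over $A$), and the family $\mathcal X=\set{\chi(\monster,z) : z\in W}$ is definable; consider on $W$ the inclusion preorder $\preceq$ of \Cref{rem:inclusion-preorder}. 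Downward directedness of $p|^+_\chi$ means exactly that $(W,\preceq)$ is downward directed (modulo passing from implication to genuine inclusion, which only shrinks $W$ to a still-$\mathcal L(A)$-definable subset and does not affect directedness, since any two elements still have a common lower bound realised in $p$).

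Next I would invoke \Cref{fact:cof-curves} to get a cofinal curve $\gamma:(0,\infty)\to W$ in $(W,\preceq)$, and by \Cref{rem:cofinal-parameters} I may take $\gamma$ to be $\mathcal L(A)$-definable. Now define $\psi(x,t)$ to be the formula $t>0\wedge \chi(x,\gamma(t))$ (more precisely, substitute the $\mathcal L(A)$-definable graph of $\gamma$ into $\chi$, and conjoin $t>0$). This is an $\mathcal L(A)$-formula with $\abs t=1$. For point~\ref{point:dd1parax1}: for every $c>0$ we have $\gamma(c)\in W$, so $\chi(x,\gamma(c))\in p(x)$, hence $\psi(x,c)\in p(x)$; conversely any instance $\psi(x,c)$ in $p$ forces $c>0$, so $p|^+_\psi=\set{\psi(x,c):c>0}$. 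For point~\ref{point:dd1parax2}: given $\phi(x,b)\in p|^+_\phi$, since $p|^+_\chi\vdash p|^+_\phi$ there is some $z_0\in W$ with $\chi(x,z_0)\vdash \phi(x,b)$; apply the cofinal curve property of $\gamma$ to the element $z_0\in W$ to get $c>0$ such that $\gamma(d)\preceq z_0$ for all $d\in(0,c)$, i.e.\ $\chi(\monster,\gamma(d))\subseteq\chi(\monster,z_0)$, whence $\psi(x,d)=\chi(x,\gamma(d))\vdash\chi(x,z_0)\vdash\phi(x,b)$. The final ``in particular'' clause is immediate: any $\phi(x,b)\in p|^+_\phi$ is implied by some $\psi(x,d)\in p|^+_\psi$.

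The main technical obstacle is the bookkeeping around the distinction between the implication preorder on formulas (used in \Cref{fact:refinement}) and the honest inclusion preorder on sets (needed to invoke \Cref{fact:cof-curves} and \Cref{rem:inclusion-preorder}), together with ensuring every object stays $\mathcal L(A)$-definable. The implication-versus-inclusion issue is genuinely harmless here because $(W,\preceq)$ downward directed only requires common lower bounds to \emph{exist} in $W$, and $p|^+_\chi$ being downward directed supplies, for any two $\chi(x,z_1),\chi(x,z_2)\in p$, a $\chi(x,z_3)\in p$ with $\chi(x,z_3)\vdash\chi(x,z_1)\wedge\chi(x,z_2)$, hence $z_3\preceq\set{z_1,z_2}$; one should just remark this explicitly. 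The parameter control is handled cleanly: $d_p\chi$ is over $A$ since $p$ is $A$-definable, so $W$ and $\preceq$ are $\mathcal L(A)$-definable, and \Cref{rem:cofinal-parameters} then delivers an $\mathcal L(A)$-definable $\gamma$. I would also note in passing that, having arranged $p|^+_\psi=\set{\psi(x,c):c>0}$ and point~\ref{point:dd1parax2}, the family $p|^+_\psi$ is automatically downward directed (with $\psi(x,\min\set{c_1,c_2})$ witnessing), which is the form in which this lemma will be used downstream.
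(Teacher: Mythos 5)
Your argument is correct and is essentially the paper's own proof: refine $\phi$ via \Cref{fact:refinement}, endow the parameter set $W$ (which is $\mathcal L(A)$-definable because $p$ is $A$-definable) with the inclusion preorder of \Cref{rem:inclusion-preorder}, take an $A$-definable cofinal curve $\gamma$ by \Cref{fact:cof-curves} and \Cref{rem:cofinal-parameters}, and let $\psi(x,t)$ be the result of substituting $\gamma(t)$ for the parameter variable in the refined formula. The implication-versus-inclusion point you flag is indeed harmless (the paper passes over it silently), since entailment between instances over the monster model coincides with inclusion of their solution sets, so no shrinking of $W$ is needed.
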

\begin{proof}
By \Cref{fact:refinement} we may assume that $\phi(x,y)$ satisfies that $p|^+_\phi$ is downward directed. Let $W=\{ b \in U^{\abs{y}} : \phi(x,b) \in p\}$. Let $\preceq$ be the directed preorder on $W$ described in \Cref{rem:inclusion-preorder}. Since $p$ is $A$-definable then $(W,\preceq)$ is $A$-definable too. By \Cref{fact:cof-curves} and \Cref{rem:cofinal-parameters} let $\gamma$ be an $A$-definable cofinal curve in $(W,\preceq)$. Consider the $\mathcal L(A)$-formula $\psi(x,t)$ given by 
\[
\exists y\; (\phi(x,y) \wedge (y = \gamma(t))).
\]
Observe that $p|^+_\psi = \{ \psi(x,c) : c>0 \}$. Since $\gamma$ is cofinal in $(W,\preceq)$, it is easy to check that condition~\ref{point:dd1parax2} is also satisfied.
\end{proof}

Denote by $p_{0^+}$ the definable type of positive infinitesimals. It is a known fact that, in an o-minimal structure $\monster$, every non-realised $A$-definable type in $\deftypes_1(U)$ is either the type at $+\infty$, the type at $-\infty$, or the type of some element infinitesimally close to some $a\in \dcl(A)$. Using the field operations one easily reaches the following fact. 

\begin{fact}\label{fact:def1tpsrcf}  In every o-minimal expansion of \rcf{} every $A$-definable $1$-type $q$ is either realised or in $A$-definable bijection with $p_{0^+}$. In particular, in the latter case $q \domeq p_{0^+}$, witnessed by the aforementioned bijection.
\end{fact}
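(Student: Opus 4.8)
The plan is to prove \Cref{fact:def1tpsrcf} by combining the structural classification of non-realised definable $1$-types in o-minimal structures with the availability of field operations. First I would recall (or cite the standard fact used just before the statement) that in any o-minimal structure $\monster$, every non-realised $A$-definable type in $\deftypes_1(U)$ falls into exactly one of three cases: the type $p_{+\infty}$ at $+\infty$, the type $p_{-\infty}$ at $-\infty$, or the type $q_a$ of an element infinitesimally close to, but distinct from, some point $a$ on one of the two sides; moreover the point $a$ may be taken in $\dcl(A)$, since it is the unique element realising the relevant cut and hence is $A$-definable. So it suffices to exhibit, in each of these finitely many cases, an $A$-definable bijection carrying $p_{0^+}$ to $q$.

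The key step is then a short explicit computation using the real closed field structure. For the type at $+\infty$, the map $x \mapsto 1/x$ (defined on $x>0$, say) is an $\emptyset$-definable bijection from an interval of the form $(0,\varepsilon)$ onto $(1/\varepsilon, +\infty)$ and visibly sends $p_{0^+}$ to $p_{+\infty}$; composing with $x \mapsto -x$ handles $p_{-\infty}$. For the type $q_a^+$ of elements infinitesimally above $a \in \dcl(A)$, the translation $x \mapsto a + x$ is an $A$-definable bijection sending $p_{0^+}$ to $q_a^+$, and post-composing with $x \mapsto 2a - x$ (or $x\mapsto a-x$ appropriately) gives the type of elements infinitesimally below $a$. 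In every case the bijection is $A$-definable, its pushforward is the prescribed type, and \Cref{fct:wort1tp} (or the trivial observation that a definable bijection witnesses domination in both directions via its graph) gives $q \domeq p_{0^+}$, witnessed by that very bijection. Since $\monster$ is saturated, these local bijections extend/restrict to definable bijections between the relevant $U$-definable neighbourhoods, so domination-equivalence is genuinely witnessed.

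Since the classification of definable $1$-types is quoted as a ``known fact'' and the field-operation computations are entirely routine, I do not anticipate a real obstacle here; the only point requiring a word of care is ensuring the parameter $a$ is in $\dcl(A)$ (which follows from definability of $q$, as $a$ is the unique element whose cut $q$ concentrates near) and that the chosen bijections are genuinely definable over $A$ rather than over $U$ — both are immediate once one writes the maps down. Thus the proof is essentially: invoke the trichotomy, write the explicit $A$-definable bijection in each case, and appeal to \Cref{fct:wort1tp} for the final ``$\domeq$'' conclusion.
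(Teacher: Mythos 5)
Your argument is correct and is essentially the paper's own: the statement is derived from the standard trichotomy for non-realised definable $1$-types in o-minimal structures (at $+\infty$, at $-\infty$, or infinitesimally close to some $a\in\dcl(A)$) together with explicit $A$-definable bijections built from the field operations. The paper leaves these routine computations implicit, and your write-up simply supplies them.
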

\begin{cor}\label{cor:rcfdeftps1dec}
  Suppose that $T$ expands \rcf.  Then every non-realised definable type is domination-equivalent to the type $p_{0^+}$ of positive infinitesimals. Moreover, if $p$ is $A$-definable, then this is witnessed by a type over $A$.
  \end{cor}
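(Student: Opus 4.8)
The plan is to reduce the statement to \Cref{fact:def1tpsrcf} by a dimension induction, using \Cref{lemma:dd1parax} to control the way a general definable type projects onto one-dimensional pieces. So let $p\in\deftypes_x(U)$ be a non-realised $A$-definable type, with $\abs x=n$; I argue by induction on $n$. The case $n=1$ is exactly \Cref{fact:def1tpsrcf}: either $p$ is realised (excluded) or it is in $A$-definable bijection with $p_{0^+}$, and the bijection witnesses $p\domeq p_{0^+}$ over $A$.

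For the inductive step, fix $a=(a_1,\dots,a_n)\models p$ and consider the pushforward $p_1\coloneqq\tp(a_1/U)$, an $A$-definable $1$-type. If $p_1$ is realised, then $a_1\in U$ and $p$ is interdefinable over $A$ with a definable type in $n-1$ variables, so we are done by induction together with the fact that domination-equivalence is preserved under adding/removing a coordinate lying in $U$ (via \Cref{fact:idemp}-style bookkeeping, or simply because such a coordinate is in $\dcl(U)$). Otherwise $p_1\domeq p_{0^+}$ over $A$ by \Cref{fact:def1tpsrcf}, witnessed by an $A$-definable bijection $f$ with $f_*p_1=p_{0^+}$; I will show $p\domeq p_{0^+}$ over $A$ directly. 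Since $p\doms p_1\domeq p_{0^+}$ (the projection to the first coordinate, composed with $f$, is a definable function realising $p_{0^+}$ in $\dcl(Ua)$, so \Cref{remark:1dir} applies — or more simply $p\doms p_1$ is witnessed by $y=f(x_1)$), it remains to prove $p_{0^+}\doms p$, i.e.\ that $p$ is realised in $\dcl(Ub)$ for some $b\models p_{0^+}$, with the witness over $A$. Equivalently, I want an $A$-definable curve $\gamma\colon(0,\infty)\to U^n$ such that $\gamma$ ``converges to $p$'': for every formula $\phi(x)\in p$ there is $c>0$ with $\gamma((0,c))\subseteq\phi(\monster)$; then $r(x,y)\coloneqq p_{0^+}(y)\cup\set{x=\gamma(y)}$ is an $A$-type witnessing $p_{0^+}\doms p$, since $b\models p_{0^+}$ forces $\gamma(b)\models p$.

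The existence of such a cofinal $A$-definable curve is where \Cref{lemma:dd1parax} does the work. Enumerate a cofinal (under implication) family of formulas in $p$; by o-minimality and definability of $p$, for a single formula $\phi(x,y)$ the lemma produces an $A$-definable $\psi_\phi(x,t)$, $\abs t=1$, with $p|^+_{\psi_\phi}=\set{\psi_\phi(x,c):c>0}$ and $p|^+_{\psi_\phi}\vdash p|^+_\phi$. The sets $\psi_\phi(\monster,c)$ for $c\to 0^+$ form an $A$-definable downward-directed family (decreasing in $c$, up to shrinking), whose intersection over all $\phi$ and all $c>0$ is exactly $p(\monster)$ (a single point $a$, in the monster). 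Intersecting finitely many such $\psi$'s — or rather, passing to the $A$-definable downward-directed family of all finite intersections of the sets $\set{\psi_\phi(x,c):c>0}$ as $\phi$ ranges over a generating set for $p$ — and applying \Cref{fact:cof-curves} together with \Cref{rem:cofinal-parameters} to the associated inclusion preorder (\Cref{rem:inclusion-preorder}) yields an $A$-definable curve $\gamma\colon(0,\infty)\to U^n$ that is cofinal, i.e.\ for each generating $\phi$ there is $c>0$ with $\gamma((0,c))\subseteq\set{x:\psi_\phi(x,\cdot)}\subseteq\phi$-set; since the $\phi$'s generate $p$, $\gamma$ converges to $p$ as required. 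The main obstacle is precisely the bookkeeping in this last paragraph: \Cref{lemma:dd1parax} is stated for one formula at a time, so I must organise the (possibly large) type $p$ into an $A$-definable downward-directed family of sets to which \Cref{fact:cof-curves} applies honestly — the cleanest route is to index by a single $\mathcal L(A)$-formula of the form $\psi(x,t_1,\dots,t_k)$ obtained by conjoining finitely many $\psi_\phi$'s, observe that $p|^+_\psi$ is still downward directed with one-dimensional ``rate'' parameters, and then run \Cref{fact:cof-curves} on the diagonal $t_1=\dots=t_k$. Once $\gamma$ is in hand, the ``moreover'' (witness over $A$) is automatic since $\gamma$, $f$, and $p_{0^+}$ are all over $A$.
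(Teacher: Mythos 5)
Your first half ($p \doms p_{0^+}$, obtained by projecting to a non-realised coordinate and applying \Cref{fact:def1tpsrcf}) is correct and matches the paper; the induction on $n$ is unnecessary but harmless. The gap is in the converse direction $p_{0^+}\doms p$: you insist on witnessing it by a single $A$-definable curve $\gamma$ with $\gamma(b)\models p$ for $b\models p_{0^+}$, and such a curve does not exist in general. Take $T=\rcf$ and $p=p_{+\infty}(x_2)\otimes p_{0^+}(x_1)$, a non-realised $\emptyset$-definable $2$-type. If $\gamma=(\gamma_2,\gamma_1)$ were $A$-definable with $(\gamma_2(b),\gamma_1(b))\models p$, then $a_1\coloneqq\gamma_1(b)$ realises the non-realised type $p_{0^+}$, so by monotonicity $\gamma_1$ is injective on some $(0,\delta)$ with $\delta\in\dcl(A)$, whence $b\in\dcl(Aa_1)$ and therefore $a_2\coloneqq\gamma_2(b)\in\dcl(Ab)\subseteq\dcl(Ua_1)$, contradicting $a_2>\dcl(Ua_1)$. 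Consequently the ``bookkeeping'' you defer at the end is not bookkeeping but an impossibility: the whole type $p$ involves infinitely many formula-schemes $\phi(x,y)$ and cannot be captured by a conjunction of finitely many $\psi_\phi$'s, while \Cref{fact:cof-curves} only ever applies to one definable family at a time.

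The repair is to drop the requirement that the witness be the graph of a definable function: a witness of domination may be any small partial type. For each $\phi(x,y)$ separately, \Cref{lemma:dd1parax} produces $\psi_\phi(x,t)$ with the \emph{same} single variable $t$; set $r(x,t)\coloneqq\bigcup_\phi\set{\psi_\phi(x,t)}$, a partial type over $A$. It is consistent with $p_{0^+}(t)$ because any $a\models p$ satisfies $\psi_\phi(a,c)$ for \emph{all} $c>0$ by point~(1) of the lemma, so one only has to choose $c$ small enough for finitely many formulas of $p_{0^+}$; and $p_{0^+}(t)\cup r(x,t)\proves p(x)$ by point~(2), since a positive infinitesimal lies below every positive $c\in U$. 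This is exactly the paper's proof; your argument is one step away from it.
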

\begin{proof}
Let $p \in \deftypes_x(U)$ be a non-realised $A$-definable type. As $p$ is non-realised, there is a projection $q(x_i)$ of $p(x)$ to some coordinate that is non-realised. Since $q$ is a projection, it is clearly $A$-definable and $p(x) \proves q(x_i)$. Since $q$ is $A$-definable, by \Cref{fact:def1tpsrcf} we have that $q(x_i) \domeq p_{0^+}(y)$, witnessed by any $r\in S_{pq}(A)$ placing $x_i$ and $y$ in definable bijection. We conclude that $p \cup r \vdash p_{0^+}$.

We now show that $p_{0^+} \doms p$, witnessed by a type over $A$. Let $r(x,t)$ be the collection of all $\mathcal L(A)$-formulas $\psi(x,t)$ given by \Cref{lemma:dd1parax} as $\phi(x,y)$ ranges among all $\mathcal L$-formulas $\phi(x,y)$.  By \Cref{lemma:dd1parax}(\ref{point:dd1parax1}), it is easy to see that  $p_{0^+}(t) \cup r(x,t)$ is consistent. 
Finally, by \Cref{lemma:dd1parax}(\ref{point:dd1parax2}), observe that $p_{0^+}(t) \cup r(x,t) \vdash p(x)$.
\end{proof}

\begin{thm}\label{thm:deftildeexprcf}
Let $T$ be an o-minimal expansion of $\mathsf{RCF}$. Then all non-realised definable types are domination-equivalent, and  $\deftilde$ is a monoid and isomorphic to $(\mathscr P(\set 0), \cup, \supseteq)$.
\end{thm}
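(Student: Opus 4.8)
The plan is to deduce \Cref{thm:deftildeexprcf} directly from the groundwork laid in this section, essentially by combining \Cref{cor:rcfdeftps1dec} with \Cref{fact:redto1tps}\ref{point:reddeftps}. First I would observe that \Cref{cor:rcfdeftps1dec} already says every non-realised definable type is domination-equivalent to $p_{0^+}$; in particular each such type is domination-equivalent to the $1$-type $p_{0^+}$, which is trivially a product of $1$-types, and realised types are domination-equivalent to the empty type (the product over the empty index set). Hence the hypothesis of \Cref{fact:redto1tps}\ref{point:reddeftps} is satisfied: every $p\in\deftypes(U)$ is domination-equivalent to a product of $1$-types. Therefore $\otimes$ and $\doms$ are compatible on definable types, so $\deftilde$ is a monoid, and it is isomorphic to $(\mathscr P_{<\omega}(Y(\monster)), \cup, \supseteq)$, where $Y(\monster)$ is the set of non-realised definable $1$-types modulo definable bijection.

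It then remains to identify $Y(\monster)$. By \Cref{fact:def1tpsrcf}, every non-realised definable $1$-type is in $A$-definable bijection with $p_{0^+}$ for a suitable $A$, hence all non-realised definable $1$-types lie in a single class modulo definable bijection; so $\abs{Y(\monster)}=1$. Consequently $\mathscr P_{<\omega}(Y(\monster))$ has exactly two elements, $\es$ and $Y(\monster)$ itself, and the map sending $Y(\monster)\mapsto\set 0$, $\es\mapsto\es$ is an isomorphism of the upper semilattices $(\mathscr P_{<\omega}(Y(\monster)),\cup,\supseteq)$ and $(\mathscr P(\set 0),\cup,\supseteq)$. Composing, $\deftilde\cong(\mathscr P(\set 0),\cup,\supseteq)$, and the two classes are exactly $\class{p_{0^+}}$ (the class of all non-realised definable types, by \Cref{cor:rcfdeftps1dec}) and the class of realised types. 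In particular all non-realised definable types are domination-equivalent, as claimed.

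I do not expect a genuine obstacle here, since the substantive content has been packaged into \Cref{cor:rcfdeftps1dec}, \Cref{fact:def1tpsrcf}, and \Cref{fact:redto1tps}\ref{point:reddeftps}; the proof is a short assembly of these. The only point requiring a line of care is the bookkeeping around realised types: one must note that a realised type is domination-equivalent to the empty type, so that ``domination-equivalent to a product of $1$-types'' is read with the empty product allowed, matching the convention under which \Cref{fact:redto1tps}\ref{point:reddeftps} identifies the bottom element of $\mathscr P_{<\omega}(Y(\monster))$ with $\es$. With that understood, the argument closes immediately.
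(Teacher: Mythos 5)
Your proposal is correct and follows exactly the paper's own proof, which is a one-line citation of \Cref{fact:redto1tps}(\ref{point:reddeftps}), \Cref{cor:rcfdeftps1dec}, and \Cref{fact:def1tpsrcf}; you have simply spelled out the assembly in more detail, including the correct bookkeeping for realised types via the empty product.
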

\begin{proof}
This follows from \Cref{fact:redto1tps}(\ref{point:reddeftps}), \Cref{cor:rcfdeftps1dec}, and \Cref{fact:def1tpsrcf}.
\end{proof}

\section{Orthogonality in semi-bounded structures}\label{sec:semibdd}
\subsection{Preliminaries}
In this section, we fix a semi-bounded o-minimal expansion $\cal{M}=\langle M, <, +, 0, \ldots\rangle$  of an ordered group that is not linear. For convenience, we assume that the language $\mathcal L$ of $\mathcal M$ contains a constant symbol $1$ for a positive element such that the induced structure $\mathcal R$ on $R=(0,1)$ expands a real closed field whose order agrees with $<$.\footnote{\label{fn:kobi2}If instead of $1$ we named any non-zero constant, the set $R$ would still be $\emptyset$-definable by definable choice.} Note that, as far as weak orthogonality and domination are concerned, naming finitely many constants is harmless.

For the sake of brevity, until the end of the section we will abuse the notation by writing $R$ in place of $R(\mathcal M)$.
In this section, \emph{definable} means `definable in \cal M with parameters' unless otherwise specified.    By $\Lambda$ we denote the set of all $\es$-definable partial endomorphisms of $\langle M, <, +, 0\rangle$. The \emph{linear reduct} of $\mathcal M$ is ${\cal M}_\mathrm{lin}\coloneqq\langle M, <, +, 0, \{\lam\}_{\lam\in\Lambda} \rangle$ of $\cal{M}$, and its language is denoted by ${\cal L}_\mathrm{lin}$.

Following \cite{pet-sbd}, an interval $I\sub M$ is called \emph{short} if there is a definable bijection between $I$ and $R$; otherwise, it is called \emph{long}. 
Equivalently, $I$ is short if there is a definable real closed field with domain $I$ (\cite[Corollary 3.3]{pet-sbd}) whose order agrees with that of $\mathcal M$. An element $a\in M$ is called \emph{short} if either $a=0$ or $(0, \abs{a})$ is a short interval; otherwise, it is called \emph{tall}.  A tuple $a\in M^n$ is called \emph{short} if $\abs{a}\coloneqq \abs{a_1}+\ldots+\abs{a_n}$ is short, and \emph{tall} otherwise. A definable set $X\sub M^n$ (or its defining formula) is called \emph{short} if it is in definable bijection with a subset of $R^n$; otherwise, it is called \emph{long}. Notice that this is compatible, for $n=1$, with the notion of a short interval.

We  recall some information from \cite{el-sbdgroups}. Define the following closure operator. For  $A\sub M$, let
\[\scl(A)=\{a\in M : \text{there is an $A$-definable short interval that contains $a$}\}.\]

\begin{remark}
Contrary to what happens with the usual $\dcl$ operator, $\scl(A)$ grows if we pass to an elementary extension. In this section, when we write $\scl$, we mean $\scl$ computed in $\mathcal M$.
\end{remark}

By \cite[Lemma 5.5]{el-sbdgroups}, $\scl$ defines a pregeometry. Let $\rk(b/A)$ denote the usual $\scl$-dimension of a tuple $b$ over $A$.
Let $\monster\succ \mathcal M$ be a monster model, and let
\[\ldim (X)=\max\{\rk(b/A): b\in X(\monster), \text{$A$ small, and $X$ is $A$-definable}\}\] be the associated notion of dimension for a definable set $X$, which is shown in~\cite[Lemma~5.7]{el-sbdgroups} to be well-defined.\footnote{In~\cite{el-sbdgroups} $\mathrm{ldim}$ is denoted by $\mathrm{lgdim}$.}

In \cite[Corollary 5.10] {el-sbdgroups}, $\ldim$ was shown to equal a `structural dimension', which we do not define here, but notice that using that equality, we obtain that for a definable set $X$, we have $\ldim (X)=0$ if and only if $X$ is short. Moreover, the following hold:

\begin{fact}\label{fact1} For definable sets $X,Y$ and a definable function $f:X\to M^n$, we have:
\begin{enumerate}
  \item $\ldim (X\times Y)=\ldim X + \ldim Y$,
  \item\label{point:fact12} $\ldim f(X)\le \ldim X$, with equality if $f$ is a bijection.
\end{enumerate}
\end{fact}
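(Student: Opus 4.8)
The plan is to deduce both statements from the fact that $\scl$ is a pregeometry (\cite[Lemma~5.5]{el-sbdgroups}), using only the exchange property (equivalently, additivity and monotonicity of the associated rank $\rk$) together with two elementary observations: first, $\dcl(A)\subseteq\scl(A)$ for every $A$, since a singleton is a short interval and hence lies in its own $\scl$-hull; second, $\ldim X$ is the \emph{generic} $\scl$-rank of $X$, in the sense that for every small $A$ over which $X$ is defined the supremum in its definition is attained by some $b\in X(\monster)$, and such a $b$ may in addition be taken $\scl$-independent over $Ac$ from any prescribed small tuple $c$. This last point is precisely what the well-definedness statement \cite[Lemma~5.7]{el-sbdgroups} provides, and it is the only non-formal ingredient; alternatively one could run the same arguments through the structural-dimension characterisation of \cite[Corollary~5.10]{el-sbdgroups}, where additivity over products and invariance under bijections are visible from the combinatorics of the decomposition.

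I would prove the inequality in \ref{point:fact12} first, as it is the softer part. Fix a small $A$ over which $X$ and $f$ are defined; then $f(X)$ is $A$-definable and $f(X)(\monster)=f\bigl(X(\monster)\bigr)$. For $b\in X(\monster)$, every coordinate of $f(b)$ lies in $\dcl(Ab)\subseteq\scl(Ab)$, so $\rk(f(b)/Ab)=0$ and additivity of pregeometry rank gives $\rk(f(b)/A)\le\rk\bigl(b\,f(b)/A\bigr)=\rk(b/A)\le\ldim X$. Taking the supremum over $b$ yields $\ldim f(X)\le\ldim X$. If $f$ is a bijection its inverse is again $A$-definable, so applying the inequality to $f^{-1}$ gives $\ldim X=\ldim f^{-1}(f(X))\le\ldim f(X)$, and equality follows.

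For part~(1), fix a small $A$ over which both $X$ and $Y$ are defined, so $X\times Y$ is $A$-definable. For $\le$, take $(b,c)\in (X\times Y)(\monster)$; by additivity $\rk(bc/A)=\rk(b/A)+\rk(c/Ab)$, and by monotonicity under enlarging the base $\rk(c/Ab)\le\rk(c/A)$, whence $\rk(bc/A)\le\rk(b/A)+\rk(c/A)\le\ldim X+\ldim Y$. Taking the supremum gives $\ldim(X\times Y)\le\ldim X+\ldim Y$. For $\ge$, choose $b\in X(\monster)$ with $\rk(b/A)=\ldim X$, and then $c\in Y(\monster)$ that is $\scl$-independent from $b$ over $A$ and realises the generic rank, so that $\rk(c/Ab)=\rk(c/A)=\ldim Y$; then $(b,c)\in(X\times Y)(\monster)$ and additivity gives $\rk(bc/A)=\ldim X+\ldim Y$, proving $\ldim(X\times Y)\ge\ldim X+\ldim Y$.

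The main obstacle, as indicated above, is the step in the $\ge$ direction of~(1) asserting that the supremum defining $\ldim Y$ is attained by a realisation that can be chosen $\scl$-free over an arbitrary small base $Ab$. I expect this to require a genuine appeal to the saturation of $\monster$ (to produce an $\scl$-independent realisation) together with \cite[Lemma~5.7]{el-sbdgroups} (to guarantee that the rank over $Ab$ is still the full $\ldim Y$ and is independent of the choice of base). Once this is granted, the remainder of both proofs is routine bookkeeping with the exchange property and the inclusion $\dcl\subseteq\scl$.
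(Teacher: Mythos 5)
Your argument is correct, but it is not the route the paper takes: the paper's ``proof'' of this fact is a bare citation of \cite[Lemma 4.2 and Corollary 3.11]{el-sbdgroups}, where both statements are established via the structure theory of semi-bounded sets (long cones and the structural characterisation of $\ldim$ recalled in \cite[Corollary 5.10]{el-sbdgroups}). You instead derive everything axiomatically from the fact that $\scl$ is a pregeometry, plus the inclusion $\dcl(A)\subseteq\scl(A)$ and the base-independence of $\ldim$. The bookkeeping is sound: for~(2), $\rk(f(b)/Ab)=0$ and additivity give $\rk(f(b)/A)\le\rk(b/A)$, and the bijection case follows by applying this to $f^{-1}$; for~(1), subadditivity gives $\le$, and for $\ge$ you correctly reduce to finding $c\in Y(\monster)$ with $\rk(c/Ab)=\ldim Y$, which is exactly what the well-definedness statement (independence of $\ldim$ from the choice of small defining base, applied to the base $Ab$) supplies together with saturation. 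Two minor points: your justification that $\dcl(A)\subseteq\scl(A)$ via ``a singleton is a short interval'' is loose --- the clean reason is \Cref{fact-scl}\ref{point:fact-scl2} with $b=a$, since $0$ is short by definition --- and you are implicitly using that the well-definedness of \cite[Lemma~5.7]{el-sbdgroups} means the maximum is attained and takes the same value over \emph{every} small base over which the set is definable; that is indeed its content, so the gap you flag is not a gap. What your approach buys is a self-contained, purely pregeometry-theoretic proof; what the paper's citation buys is consistency with the source where $\ldim$ and its properties are actually developed.
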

\begin{proof}
By \cite[Lemma 4.2 and Corollary 3.11]{el-sbdgroups}, respectively.
\end{proof}

There is also a dimension formula for families, but we will not need it. Also, notably, $\ldim$ is not a definable notion.

\begin{fact}\label{fact-kobi}
Let $\mathcal M$ be any o-minimal expansion of an ordered group with a non-zero constant. Suppose $X, Y$ are two $A$-definable sets such that there is a definable bijection between them. Then there is an $A$-definable bijection between them.
\end{fact}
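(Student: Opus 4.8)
The plan is to invoke definable choice, which holds in any o-minimal expansion of an ordered group — this is precisely the role of the named non-zero constant, since it guarantees that $\mathcal M$ defines at least two distinct elements and hence has a definable (not merely piecewise-definable) Skolem function on every non-empty definable family. Concretely, suppose $f$ is a definable bijection between the $A$-definable sets $X$ and $Y$, say defined by a formula $\varphi(x,y,\bar c)$ with parameters $\bar c$. Consider the $A$-definable set
\[
  S = \set{\bar d : \varphi(x,y,\bar d) \text{ is the graph of a bijection from } X \text{ onto } Y},
\]
which is non-empty (it contains $\bar c$) and defined over $A$ alone, since the property of being the graph of a bijection $X \to Y$ is expressible by a first-order formula in the parameters, using only $A$-definability of $X$ and $Y$.

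By definable choice in $\mathcal M$ over $A$, there is an element $\bar d_0 \in S \cap \dcl(A)$. Then $\varphi(x,y,\bar d_0)$ defines a bijection between $X$ and $Y$ whose graph is $\dcl(A)$-definable, hence $A$-definable, which is the desired conclusion.

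The only point requiring care is the availability of definable choice over the prescribed parameter set $A$: this is standard for o-minimal expansions of ordered groups once one has two distinct $\emptyset$-definable (or, here, $A$-definable) elements — e.g. $0$ and $1$ — so there is no real obstacle, merely bookkeeping to check that $S$ is indeed $A$-definable and non-empty. $\hfill\square$
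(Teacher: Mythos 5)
Your proof is correct and is essentially the paper's own argument: the paper proves this fact by pointing to the same parameter-selection trick used in \Cref{rem:cofinal-parameters}, namely forming the $A$-definable set of parameters defining a bijection from $X$ onto $Y$ and applying definable choice to extract a tuple in $\dcl(A)$. Your additional remarks on why definable choice is available over $A$ (using the named non-zero constant) are accurate and fill in exactly the routine bookkeeping the paper leaves implicit.
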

\begin{proof} This is similar to the argument in \Cref{rem:cofinal-parameters}.
\end{proof}

\begin{fact}\label{fact-scl}
The following are equivalent, for $a\in M$ and $A\sub M$:
\begin{enumerate}
  \item\label{point:fact-scl1} $a\in \dcl(AR)$,
  \item\label{point:fact-scl2} there is $b\in \dcl(A)$ such that $a-b$ is short,
  \item\label{point:fact-scl3} $a\in \scl(A)$.
\end{enumerate}
\end{fact}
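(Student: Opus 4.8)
The plan is to prove the equivalence of the three conditions in Fact~\ref{fact-scl} by establishing the cycle $\ref{point:fact-scl3}\Rightarrow\ref{point:fact-scl2}\Rightarrow\ref{point:fact-scl1}\Rightarrow\ref{point:fact-scl3}$, using the o-minimal structure of $\scl$ and the group operation $+$ throughout.

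\medskip

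\textbf{$\ref{point:fact-scl3}\Rightarrow\ref{point:fact-scl2}$.} Suppose $a\in\scl(A)$, so by definition there is an $A$-definable short interval $I$ containing $a$. First I would arrange that $I$ is an \emph{open} interval, say $I=(c,d)$ with $c,d\in\dcl(A)\cup\set{\pm\infty}$; since $I$ is short it is bounded, so $c,d\in\dcl(A)$. The idea is to translate: the set $I-c=\set{x-c: x\in I}$ is an $A$-definable short interval (shortness is preserved under the $A$-definable translation by $-c$, using \Cref{fact1}\ref{point:fact12}) with left endpoint $0$, and its right endpoint is the short element $d-c\in\dcl(A)$. Then $a-c\in I-c\subseteq (0,d-c)$, so $(0,\abs{a-c})\subseteq(0,d-c)$ is short, i.e.\ $a-c$ is short. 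Taking $b\coloneqq c\in\dcl(A)$ gives \ref{point:fact-scl2}.

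\medskip

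\textbf{$\ref{point:fact-scl2}\Rightarrow\ref{point:fact-scl1}$.} Suppose $a-b$ is short for some $b\in\dcl(A)$. If $a-b=0$ then $a=b\in\dcl(A)\subseteq\dcl(AR)$. Otherwise $(0,\abs{a-b})$ is a short interval, so by the characterisation of shortness there is a definable bijection $g$ between $(0,\abs{a-b})$ and $R=(0,1)$; by \Cref{fact-kobi} (applied over the parameter set $A$, noting $\abs{a-b}\in\dcl(A)$) we may take $g$ to be $A$-definable. Extend $g$ to an $A$-definable bijection $\tilde g$ between the symmetric interval $(-\abs{a-b},\abs{a-b})$ and a subset of $R\cup(-R)\cup\set 0$ — for instance by setting $\tilde g(x)=g(x)$ for $x>0$, $\tilde g(x)=-g(-x)$ for $x<0$, and $\tilde g(0)=0$; each of these pieces lands in $\dcl(AR)$ once we evaluate at $x=a-b$. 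Hence $\tilde g(a-b)\in\dcl(AR)$ and, applying the $A$-definable inverse, $a-b\in\dcl(AR)$, so $a=b+(a-b)\in\dcl(AR)$.

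\medskip

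\textbf{$\ref{point:fact-scl1}\Rightarrow\ref{point:fact-scl3}$.} Suppose $a\in\dcl(AR)$, say $a=f(r)$ for an $A$-definable function $f$ and some tuple $r\in R^n$. The image $f(R^n)$ is an $A$-definable set which, being the image of $R^n$ under a definable function, has $\ldim f(R^n)\le\ldim R^n=0$ by \Cref{fact1}\ref{point:fact12} and $\ldim R=0$; hence $f(R^n)$ is short. By cell decomposition applied to the $A$-definable function $f$, there is an $A$-definable partition of $R^n$ into cells on each of which $f$ is continuous; $a$ lies in $f(C)$ for one such cell $C$, and $f(C)$ is an $A$-definable short set. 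If $f(C)$ is finite then $a\in\dcl(A)$, done. Otherwise $f(C)\subseteq M$ is an $A$-definable short set, hence contained in a finite union of $A$-definable short intervals and points (again by o-minimality of the unary induced structure, a short definable subset of $M$ is a finite union of short intervals and singletons — any such interval is short because it embeds into the short set $f(C)$), and $a$ lies in one of them: an $A$-definable short interval. So $a\in\scl(A)$.

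\medskip

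\textbf{Anticipated main obstacle.} The delicate points are all about keeping parameters under control and about preserving shortness under the manipulations (translation, symmetrising, passing to a cell). The translation-invariance of shortness and the parameter-control in $\ref{point:fact-scl2}\Rightarrow\ref{point:fact-scl1}$ rely essentially on \Cref{fact-kobi} and on the fact that a bijection preserves $\ldim$ (\Cref{fact1}\ref{point:fact12}); I expect the implication $\ref{point:fact-scl1}\Rightarrow\ref{point:fact-scl3}$ to require the most care, since one must pass from ``$a$ is in the image of a short set under a definable function'' to ``$a$ lies in a short \emph{interval}'', which uses o-minimality of the induced structure on $M$ together with the dimension bound $\ldim f(R^n)=0$. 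Alternatively, this last implication can be shortened by citing that $\ldim(X)=0$ iff $X$ is short (stated in the excerpt) directly for $X=f(R^n)$, and then invoking that a short subset of $M$ is covered by finitely many short intervals.
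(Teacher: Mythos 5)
Your implications $\ref{point:fact-scl3}\Rightarrow\ref{point:fact-scl2}$ and $\ref{point:fact-scl1}\Rightarrow\ref{point:fact-scl3}$ are fine (the latter is essentially the paper's own argument), but the step $\ref{point:fact-scl2}\Rightarrow\ref{point:fact-scl1}$ has a genuine gap, and it sits exactly at the nontrivial part of the statement. First, the parenthetical claim $\abs{a-b}\in\dcl(A)$ is false in general: $b\in\dcl(A)$, but $a$ need not be, so $\abs{a-b}\in\dcl(A)$ would already force $a\in\dcl(A)$. Hence the interval $(0,\abs{a-b})$ is only $Aa$-definable, and \Cref{fact-kobi} --- which requires \emph{both} sets to be $A$-definable --- cannot be applied over $A$ to produce an $A$-definable bijection $g$. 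Second, even granting some definable bijection $g\colon(0,\abs{a-b})\to R$, the element $a-b$ is an \emph{endpoint} of $(0,\abs{a-b})$ and of the symmetrised interval $(-\abs{a-b},\abs{a-b})$, not a member of either, so $\tilde g(a-b)$ is undefined. Enlarging the interval so that it contains $a-b$ in its interior reintroduces the parameter $a$ into the definition of the interval (hence of the bijection), which destroys the desired conclusion $a\in\dcl(AR)$.

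What is really needed to pass from~\ref{point:fact-scl2} to~\ref{point:fact-scl1} (equivalently, to~\ref{point:fact-scl3}) is an \emph{$A$-definable} short interval containing $a$ in its interior; extracting one from the mere shortness of $a-b$ is precisely the content of \cite[Lemma~5.4]{el-sbdgroups}, which the paper cites for the equivalence $\ref{point:fact-scl2}\Leftrightarrow\ref{point:fact-scl3}$ before adding the two easy implications between~\ref{point:fact-scl1} and~\ref{point:fact-scl3}. Your proof silently assumes this parameter control rather than establishing it, so the cycle does not close; either cite that lemma for $\ref{point:fact-scl2}\Rightarrow\ref{point:fact-scl3}$ and then run your (correct) argument for $\ref{point:fact-scl3}\Rightarrow\ref{point:fact-scl1}$, or supply a genuine proof of the missing implication.
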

\begin{proof}
By  \cite[Lemma 5.4]{el-sbdgroups}, $\ref{point:fact-scl2}\Lrarr\ref{point:fact-scl3}$. For $\ref{point:fact-scl1}\Rarr\ref{point:fact-scl3}$,  let $a\in \dcl(A R)$. So there is an $A$-definable map $f$ with $a\in f(R^k)$, for some $k$. By \Cref{fact1}(\ref{point:fact12}), $f(R^k)$ is short, as needed. For $\ref{point:fact-scl3}\Rarr\ref{point:fact-scl1}$, let $X$ be an $A$-definable short interval that contains $a$. 
Since $X$ is short, it is in definable bijection with $R$ and, by  \Cref{fact-kobi}, that bijection can be chosen to be $A$-definable.  Therefore, $a\in \dcl(AR)$, as needed.
\end{proof}

\begin{lemma}\label{factcY} Let $c\in M^n$ be a tuple with $\rk(c/A)=k$. Then there is an $A$-definable set $Y$ containing $c$ with $\ldim(Y)=k$.
\end{lemma}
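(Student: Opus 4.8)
The plan is to express $c$ as the value of an $A$-definable function whose domain is a product of $k$ copies of $M$ with a short set, and to take $Y$ to be the range of that function; then \Cref{fact1} will bound $\ldim(Y)$ from above by $k$, while the reverse inequality comes for free.

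First, observe that $\ldim(Y)\ge k$ holds for \emph{every} $A$-definable set $Y$ containing $c$: since $c\in Y(\monster)$ and $Y$ is $A$-definable, the value $\rk(c/A)=k$ is among those over which the maximum defining $\ldim(Y)$ is taken. So the real task is to find an $A$-definable $Y\ni c$ with $\ldim(Y)\le k$.

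To produce it, use the exchange property of the pregeometry $\scl$ to pick a subtuple $c'$ of $c$ that is an $\scl$-basis of $c$ over $A$; thus $\abs{c'}=k$, $\rk(c'/A)=k$, and $c\in\scl(Ac')$. By \Cref{fact-scl}, each coordinate $c_j$ of $c$ lies in $\dcl(Ac'R)$, so there are $A$-definable functions $f_j$ and tuples $\bar r_j\in R^{m_j}$ with $c_j=f_j(c',\bar r_j)$. Extending each $f_j$ by a default value outside its domain and setting $\bar r\coloneqq(\bar r_1,\dots,\bar r_n)$ with $m\coloneqq\sum_j m_j$, we obtain a total $A$-definable map $F\colon M^k\times R^m\to M^n$ with $F(c',\bar r)=c$. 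Put $Y\coloneqq F(M^k\times R^m)$; this is $A$-definable and contains $c=F(c',\bar r)$. By \Cref{fact1},
\[
  \ldim(Y)\le\ldim(M^k\times R^m)=k\cdot\ldim(M)+m\cdot\ldim(R)=k,
\]
since $\ldim(R)=0$ ($R$ is short) and $\ldim(M)=1$ ($M$ is long, as $\mathcal M$ is semi-bounded). Combined with $\ldim(Y)\ge k$, this gives $\ldim(Y)=k$.

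I do not expect a genuine obstacle here. The one point to bear in mind is that shortness, and hence $\ldim$, is not a definable notion, so one cannot hope to shrink $Y$ to a minimal such set; but this is unnecessary, since the inequality $\ldim F(X)\le\ldim X$ of \Cref{fact1} already yields the bound $\le k$ with no uniformity required.
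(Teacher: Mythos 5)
Your proof is correct and follows essentially the same route as the paper's: choose an $\scl$-basis $c'$ of $c$ over $A$, use \Cref{fact-scl} to write $c$ as the image of $(c',\bar r)$, $\bar r\in R^m$, under an $A$-definable map on $M^k\times R^m$, and control $\ldim$ via \Cref{fact1}. The only cosmetic difference is that the paper takes $Y$ to be the graph of the restricted map and gets $\ldim Y=k$ from the bijection clause of \Cref{fact1}, whereas you take the image and add the (trivial, and correct) lower bound from the definition of $\ldim$.
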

\begin{proof}
Let $c'$ be an  $\scl$-basis of $c$ over $A$. Say $c=(c', d)$, after perhaps permuting coordinates. By \Cref{fact-scl} there is an $A$-definable map $f:M^{k+m}\to M^{n-k}$ with $f(c', r)  = d$, for some $r\in R^m$. Let $Y$ be the graph of the map $f_{\res M^k \times R^m}$.
This is an $A$-definable set containing $c$ with $\ldim Y=k$.
\end{proof}

The induced structure $\mathcal R$ of \cal M on $R$ is obviously  o-minimal. We next recall that, in the special case when \cal M  is a reduct of a real closed field, $\cal R$ is a pure real closed field, that is, a real closed field with no extra structure. Such a case arises when, for example, we start with a real closed field $\cal N=\la M, <, +, \cdot\ra$, and let \cal R be a definable (in \cal N) real closed field with bounded domain, and $\cal M=\la M, <,+, \cal R\ra$.

\begin{fact}[{\cite[Corollary 3.6]{mpp}}]\label{fact-Rind} Let $\cal M$ be a reduct of a pure real closed field \cal N. Then $\cal R$ is a pure real closed field.
\end{fact}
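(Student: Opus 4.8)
The plan is to show that the induced structure $\mathcal R=\mathcal R(\mathcal M)$ on $R=(0,1)$ is, up to interdefinability, a pure real closed field. Write $K=\langle R,<,\oplus,\otimes,\dots\rangle$ for the real closed field that $\mathcal R$ expands by the standing assumption of this section, recalling that the order of $K$ agrees with $<$ restricted to $R$. First I would observe that, because $\mathcal M$ is a reduct of the pure real closed field $\mathcal N=\langle M,<,+,\cdot\rangle$, every $\mathcal R$-definable subset of $R^n$ is the trace on $R^n$ of an $\mathcal M$-definable — hence $\mathcal N$-definable, i.e.\ semialgebraic — subset of $M^n$, so it is itself semialgebraic over parameters from $R$. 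Since, conversely, $\mathcal R$ expands $K$, it then suffices to prove that \emph{every semialgebraic subset of $R^n$ is definable in the field $K$ with parameters}: this gives the equalities $\{\mathcal R\text{-definable subsets of }R^n\}=\{\text{semialgebraic subsets of }R^n\}=\{K\text{-definable subsets of }R^n\}$, which is exactly the assertion that $\mathcal R$ is interdefinable with the pure real closed field $K$.

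Next I would note that $K$ is a real closed field definable over parameters in $\mathcal N$, since its operations are $\mathcal M$-definable, hence semialgebraic. By the theorem on definable fields in o-minimal structures, applied to the real closed field $\mathcal N$, the field $K$ is $\mathcal N$-definably isomorphic to $\mathcal N$ or to $\mathcal N(i)$; as $K$ is ordered and real closed, it must be the former. I would then fix an $\mathcal N$-definable (over parameters) field isomorphism $g\colon K\to\mathcal N$. Being an isomorphism of real closed fields, $g$ preserves the canonical ordering (positivity $=$ being a square), and since the orderings of both $K$ and $\mathcal N$ agree with $<$, the map $g\colon R\to M$ is a semialgebraic, order-preserving bijection.

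Finally I would transport semialgebraic conditions along $g$. Let $S\subseteq R^n$ be semialgebraic over some parameters. Then $g^n(S)\subseteq M^n$ is semialgebraic, so by quantifier elimination for $\rcf$ it is a Boolean combination of conditions $p(\bar y)>0$ with $p\in M[\bar y]$. Fix one such $p$; writing each coefficient $c$ of $p$ as $c=g(g^{-1}(c))$ with $g^{-1}(c)\in R$, and using that $g$ is a ring homomorphism, one gets $p(g^n(\bar a))=g\bigl(q(\bar a)\bigr)$ for every $\bar a\in R^n$, where $q$ is the polynomial over $K$ obtained from $p$ by replacing each coefficient $c$ with $g^{-1}(c)$ and reading the operations in $K$. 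Since $g$ is order-preserving, $p(g^n(\bar a))>0\iff q(\bar a)>0$. Running this through the Boolean combination yields a quantifier-free $K$-formula $\tilde\theta(\bar x)$ over parameters from $R$ such that, for every $\bar a\in R^n$, we have $g^n(\bar a)\in g^n(S)\iff\tilde\theta(\bar a)$; since $g^n$ is a bijection $R^n\to M^n$, this says $S=\{\bar a\in R^n:\tilde\theta(\bar a)\}$, so $S$ is $K$-definable with parameters, as required.

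The one genuinely nontrivial ingredient is the appeal, in the second step, to the fact that an infinite field definable in a real closed field is definably isomorphic to that field or to its algebraic closure; everything else is bookkeeping (reducts downgrade $\mathcal R$-definability to semialgebraicity, real closed field isomorphisms respect the order, and polynomial conditions transport correctly across $g$). One could alternatively fix once and for all an $\emptyset$-semialgebraic bijection $h\colon R\to M$, transport $\mathcal N$ to a pure real closed field $K_1=h^\ast\mathcal N$ on $R$ — for which $K_1$-definability equals semialgebraicity on $R^n$ by construction — and argue as above with $K_1$ in place of $\mathcal N$; but one still needs the definable-fields theorem to see that the $K$ that $\mathcal R$ expands is $\mathcal R$-definably isomorphic to $K_1$, so this is only a cosmetic change.
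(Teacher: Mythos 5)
Your proof is correct and follows essentially the same route as the paper: both rest on the Otero--Peterzil--Pillay theorem that the real closed field structure on $R$ is $\mathcal N$-definably isomorphic to $\mathcal N$ itself, and then transport definable (equivalently, semialgebraic) sets along that isomorphism. The only difference is that you spell out the transport step explicitly via quantifier elimination and coefficient-substitution, where the paper simply invokes that definability is preserved under isomorphisms of structures.
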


\begin{proof}$ $
Let $\mathcal R'$ be the reduct of $\mathcal R$ to the ordered field structure.
By \cite{opp}, there is an isomorphism $\sigma: \cal N\to \cal R'$  definable in \cal N. Now let $X\sub R^n$ be definable in $\mathcal R$. So $X$ is also definable in \cal N. Hence, $\sigma^{-1}(X)$ is definable in \cal N. But since $\sigma$ is an isomorphism between the structures \cal N and $\cal R'$, this means that $X$ is definable in $\cal R'$.
\end{proof}

\subsection{Long cones} In \cite{el-sbdgroups},  the notion of a `cone' from \cite{ed1} was refined as follows.

\begin{defn}
 If $v=(\lam_1, \ldots, \lam_n)
\in \Lam^n$ and $t\in M$, we denote $v t\coloneqq (\lambda_1 t, \ldots, \lambda_n t)$ and $\dom(v)\coloneqq \cap_{i=1}^n \dom(\lambda_i)$. We say that $v_1,\ldots, v_k\in \Lambda^n$ are \emph{independent} if for all $t_1,\ldots, t_k \in M$ with $t_i\in \dom(v_i)$,
\[
v_1 t_1+\ldots+v_k t_k=0 \,\text{ implies }\, t_1=\cdots =t_k=0.
\]
\end{defn}

\begin{defn}\label{longcone}
Let $k\in \bb{N}$. A \emph{$k$-long cone} $C\subseteq M^n$ is a definable set of the form
\[
\left\{ b+\sum_{i=1}^k v_i t_i :b\in B ,\, t_i \in J_i \right\},
\]
where $B\subseteq M^n$ is a short cell, $v_1,\ldots, v_k\in \Lambda^n$
are independent and $J_1, \ldots, J_k$ are long intervals each of the form $(0, a_i)$, for $a_i\in
M^{>0}\cup\{\infty\}$, with $J_i\sub \dom(v_i)$. So a $0$-long cone is just a short cell. A \emph{long cone} is a $k$-long cone, for some $k\in \bb{N}$. We say
that the long cone $C$ is \emph{normalised} if for each $x\in C$ there are unique $b\in B$ and $t_1\in J_1,\ldots, t_k\in J_k$ such that
$x=b+\sum_{i=1}^kv_i t_i$. In this case, we write:
\[
C=B+\sum_{i=1}^kv_i {t_i} {|J_i}.
\]
In what follows, all long cones are assumed to be normalised, and we thus drop the word `normalised'.  We will also often omit `long' from the terminology.
By a \emph{subcone of $C$} we simply mean a  cone contained in $C$.
\end{defn}
We recall the first part of the structure theorem from \cite{el-sbdgroups}. (The second part is about definable functions, but we will not use that here.)
\begin{fact}[Structure Theorem, {\cite[Theorem~3.8]{el-sbdgroups}}]\label{strtheorem} Let $X\sub M^n$ be an $A$-definable set. Then $X$ is a finite union of $A$-definable  cones.
\end{fact}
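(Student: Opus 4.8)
The plan is to prove the statement by induction on $n$, after reducing to the case where $X$ is a cell. By o-minimal cell decomposition $X$ is a finite union of $A$-definable cells, and a finite union of finite unions of cones is a finite union of cones, so it suffices to treat a single $A$-definable cell $C$. For $n=1$ this is immediate: a cell in $M$ is a point or an open interval; a point and a short interval are $0$-cones (short cells), while a long interval $(a,b)$, allowing $a=-\infty$ or $b=+\infty$, becomes — after possibly splitting at $0$ and at one intermediate point — a $1$-cone of the form $\{c\}+(\pm\mathrm{id})\,t|_{(0,a')}$ with short cell $\{c\}$ and $a'\in M^{>0}\cup\{\infty\}$. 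For the inductive step let $C\subseteq M^{n+1}$ be a cell, $\pi\colon M^{n+1}\to M^n$ the coordinate projection, and $D=\pi(C)$, an $A$-definable cell in $M^n$; by the inductive hypothesis we may assume $D$ is a single $A$-definable cone $D=B+\sum_{i=1}^{k}v_i t_i|_{J_i}$ with $B$ a short cell. Either $C$ is the graph of an $A$-definable $f\colon D\to M$, or $C$ is a band $\{(x,y):x\in D,\ g(x)<y<h(x)\}$ with $g,h$ $A$-definable functions $D\to M\cup\{\pm\infty\}$. In both cases the whole problem reduces to understanding $A$-definable functions on a cone.

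The technical heart is a function-structure statement, which I would derive from linear boundedness of semi-bounded structures (every $\es$-definable $f\colon M\to M$ is eventually dominated by some $\lam\in\Lam$) together with the monotonicity theorem and the absence of a pole: for an $A$-definable $f\colon D\to M$ on an $A$-definable cone $D$ as above there is a finite partition of $D$ into $A$-definable subcones $D'=B'+\sum_i v_i' s_i|_{J_i'}$ on each of which
\[
f\Bigl(b'+\sum_i v_i' s_i\Bigr)=\mu(b')+\sum_i\sigma_i s_i,
\]
with $\mu$ an $A$-definable function from the short cell $B'$ into a short set, $\sigma_i\in\Lam$ (taken to be $0$ when the $i$-th long direction does not contribute), and $J_i'\subseteq\dom(\sigma_i v_i')$. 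I would prove this by induction on $k$, peeling off one long coordinate at a time: apply linear boundedness and monotonicity to the one-variable germ $t_k\mapsto f(\dots+v_k t_k)$ at the end of $J_k$ to find, on a long subinterval, some $\sigma_k\in\Lam$ for which $f(\dots+v_k t_k)-\sigma_k t_k$ takes values in a short interval, uniformly in the remaining parameters; uniformity is ensured by definability of the relevant family together with a cell-decomposition/compactness argument that makes $\sigma_k$ and the length of the good subinterval piecewise constant, hence yields finitely many subcones. One then recurses on the shortened cone. The base case $k=0$ is that a definable function on a short cell has short image, which is immediate from \Cref{fact1}; here one also uses that the induced structure on short sets is an o-minimal real closed field, so short objects admit the full o-minimal cell structure needed to treat $B'$ and $\mu$.

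Granting this lemma, one finishes by assembling cones. In the graph case, on each subcone $C$ equals $\{(b'+\sum_i v_i' s_i,\ \mu(b')+\sum_i\sigma_i s_i)\}$, which is the $k$-cone in $M^{n+1}$ with short cell $B''=\{(b',\mu(b')):b'\in B'\}$ (the graph of a short-valued function on a short cell, decomposed further into short cells) and directions $\tilde v_i=(v_i',\sigma_i)\in\Lam^{n+1}$; independence of the $\tilde v_i$ follows from that of the $v_i'$. In the band case, refine $D$ so that both $g$ and $h$ have the above form; on a subcone the fibre over $x=b'+\sum_i s_i$ is $(g(x),h(x))$, and $h(x)-g(x)=(\mu_h-\mu_g)(b')+\sum_i(\sigma_i^h-\sigma_i^g)s_i$. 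Split according to whether this width is short or long: in the long case realise the fibre interval as a one-parameter long family, absorbing it as a fresh long direction $v_{k+1}=e_{n+1}$ with a new interval variable; in the short case it contributes to the short cell. Unbounded bands ($g\equiv-\infty$ or $h\equiv+\infty$) are handled with semi-infinite long intervals $(0,\infty)$, exactly as in the case $n=1$.

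The main obstacle is the function-structure lemma, and specifically its uniformity: upgrading the pointwise fact ``the germ at the end of a long interval is $\sigma$-affine modulo short'' to a genuine partition into finitely many $A$-definable subcones requires that $\sigma\in\Lam$ and the length of the good subinterval be chosen definably in the remaining parameters and then made piecewise constant. This is where one leans most heavily on o-minimality of the parameter space, on linear boundedness holding uniformly across definable families, and on the combinatorics of how the short cell $B'$ interacts with the long directions; everything else is bookkeeping with \Cref{longcone}.
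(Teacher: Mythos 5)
First, a point of comparison: the paper does not prove this statement at all --- it is imported as a black box from \cite[Theorem~3.8]{el-sbdgroups}, so there is no internal proof to measure yours against. Your sketch does follow the architecture of that literature: induction on $n$ via cell decomposition, reducing graphs and bands over a cone to a structure theorem for definable \emph{functions} on cones --- which is exactly the ``second part'' of the cited theorem that the paper mentions but deliberately does not state. So the overall strategy is the right one.

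As a proof, however, the sketch has genuine gaps at precisely the points where the cited result does its real work. (1) The function-structure lemma is the entire mathematical content here, and your treatment of it defers the hard step: upgrading the pointwise statement ``the germ of $f$ along a long direction is $\sigma$-affine modulo short'' to a partition into finitely many $A$-definable subcones with $\sigma\in\Lambda$ and the subintervals chosen uniformly is not a routine ``cell-decomposition/compactness argument''; it requires a uniform, family-version of linear boundedness resting on the absence of poles, and in \cite{el-sbdgroups} it is established by a simultaneous induction with the set statement you are trying to prove. Asserting it as a lemma to be ``derived from linear boundedness and monotonicity'' leaves the theorem essentially unproved. (2) In the band case your construction does not produce a cone in the sense of \Cref{longcone}: there the intervals $J_i$ are \emph{fixed}, independent of the base point, whereas the fibre width $h(x)-g(x)=(\mu_h-\mu_g)(b')+\sum_i(\sigma_i^h-\sigma_i^g)s_i$ varies with $x$. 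Taking $v_{k+1}=e_{n+1}$ ``with a new interval variable'' fails whenever some $\sigma_i^h-\sigma_i^g\neq 0$; for instance $\{(x,y):0<y<x\}$ is a $2$-cone only after replacing the unit directions by $(1,0)$ and $(1,1)$. The correct move is to modify the tuple of independent directions using the slopes of $g$ and $h$, and separately one must justify the case split ``width short versus long'', since shortness is not a first-order condition on parameters (as the paper notes, $\ldim$ is not definable). These defects are repairable, but they are the substance of the theorem rather than bookkeeping.
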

\begin{lemma}\label{fact-Adefcone}
Let $C$ be a cone as above. If $C$ is $A$-definable, then so are all of $B$ and $J_1, \ldots, J_k$.
\end{lemma}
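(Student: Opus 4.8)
\textbf{Proof proposal for \Cref{fact-Adefcone}.}

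The plan is to recover $B$ and each $J_i$ from $C$ itself by a canonical construction that is preserved by $\emptyset$-definable operations, so that working over $A$ gives the result. Since the $v_i\in\Lambda^n$ are fixed $\emptyset$-definable endomorphisms, the only data in $C$ depending on $A$ are the short cell $B$ and the long intervals $J_1,\dots,J_k$, and normalisation says each $x\in C$ has a unique representation $x=b+\sum_{i=1}^k v_i t_i$ with $b\in B$, $t_i\in J_i$. First I would fix, for each $i$, a $\emptyset$-definable linear functional $\pi_i\colon M^n\to M$ annihilating $B$ (i.e.\ constant on any translate of the short cell in the relevant direction — more precisely annihilating the group generated by $v_j$ for $j\ne i$ together with the linear span of $B$) and such that $\pi_i\circ v_i$ is, on $\dom(v_i)$, an $\emptyset$-definable endomorphism with trivial kernel; such functionals exist by the independence of the $v_i$ and the fact that $B$ is short while the $J_i$ are long, after possibly enlarging $\Lambda$-combinations (this is the step to be careful about). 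Applying $\pi_i$ to $C$ then yields $\pi_i(C)=\{\,\pi_i(b)+(\pi_i\circ v_i)(t_i):b\in B,\ t_i\in J_i\,\}$; since $\pi_i(B)$ is short (by \Cref{fact1}(\ref{point:fact12})) and $(\pi_i\circ v_i)(J_i)$ is long, the long part of this set is canonically $(\pi_i\circ v_i)(J_i)$, from which one recovers $J_i$ by applying the $\emptyset$-definable inverse of $\pi_i\circ v_i$ on its domain. As $C$ is $A$-definable and all the $\pi_i$, $v_i$ and the inversions are $\emptyset$-definable, each $J_i$ is $A$-definable. Finally, once all the $J_i$ are $A$-definable, the short cell $B$ is recovered as $B=\{\,x-\sum_i v_i t_i : x\in C,\ (t_i)_i\text{ the unique tuple with }\sum_i v_i t_i\equiv x\bmod B\,\}$; more simply, fixing any $t_i^0\in J_i$ (choosable in $\dcl(A)$ since $J_i$ is $A$-definable, by definable choice) and using uniqueness of the representation, $B$ is the $A$-definable set of those $x-\sum_i v_i t_i$ obtained as $(t_i)$ ranges over the fibre, which one cuts down to a cell; alternatively project $C$ along the $v_i$-directions via an $\emptyset$-definable complement map. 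Either way $B$ comes out $A$-definable.

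The one genuinely delicate point, and the main obstacle, is the existence of the $\emptyset$-definable functionals $\pi_i$ that separate the $i$-th long direction from $B$ and from the other $v_j$. The naive worry is that $\Lambda$ need not be closed under the operations needed to produce a left inverse to $v_i$ modulo the span of $B$ and the other $v_j$. This is handled by the semi-boundedness/linearity theory: $\langle M,<,+,0,\{\lambda\}_{\lambda\in\Lambda}\rangle$ is a linear o-minimal structure in which $\Lambda$ is (the positive part of) an ordered division ring acting on $M$, so the linear-algebra manipulations — taking quotients by subspaces spanned by finitely many $v_j$ and $B$, and inverting the resulting injective map induced by $v_i$ — can all be carried out inside $\Lambda$, hence remain $\emptyset$-definable. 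Concretely one invokes that, after the coordinate change witnessing normalisation, the $v_i$ may be taken to be among the standard basis directions scaled by elements of $\Lambda$, at which point $\pi_i$ is literally a coordinate projection composed with an element of $\Lambda$; this is implicit in \Cref{longcone} and the Structure Theorem (\Cref{strtheorem}), and I would cite~\cite{el-sbdgroups,lp} for the requisite linear structure on $\Lambda$.

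A cleaner alternative, which I would present if the functional bookkeeping gets heavy: argue by uniqueness and automorphisms directly. Let $\sigma\in\Aut(\monster/A)$. Since $C$ is $A$-definable, $\sigma$ fixes $C$ setwise, and since the $v_i$ are $\emptyset$-definable, $\sigma$ maps the representation $x=b+\sum v_i t_i$ to $\sigma(x)=\sigma(b)+\sum v_i \sigma(t_i)$, which is again a representation of a point of $C$; by uniqueness of normalised representations, $\sigma(b)\in B'$ and $\sigma(t_i)\in J_i'$ where $B',J_i'$ are the data of the same cone $C=\sigma(C)$ — but a normalised long cone determines its $B$ and $J_i$ uniquely (this is exactly the content one must check, and follows from short-vs-long incomparability together with independence of the $v_i$, essentially as in the proof that $\ldim$ is well-defined), so $\sigma(B)=B$ and $\sigma(J_i)=J_i$ for all $i$. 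Hence $B$ and each $J_i$ are $\Aut(\monster/A)$-invariant, and being definable they are $A$-definable. Under this route the ``main obstacle'' becomes the uniqueness lemma for the combinatorial data of a normalised long cone, which should already be available from~\cite{el-sbdgroups} or provable in a line from independence of the $v_i$ and the fact that sums of bounded-many short sets are short while each $J_i$ is long.
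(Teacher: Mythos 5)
Your first route has a concrete flaw at its key step: you want an $\emptyset$-definable functional $\pi_i$ annihilating ``the linear span of $B$'' together with the $v_j$ for $j\ne i$, but a short cell $B$ of positive dimension (say an open short box in $M^n$) linearly spans all of $M^n$, so the only such functional is zero. The weakened fallback you then use --- that $\pi_i(C)=\pi_i(B)+(\pi_i\circ v_i)(J_i)$ has a ``canonical long part'' equal to $(\pi_i\circ v_i)(J_i)$ --- is also not a well-defined operation: a sumset of a short set and a long interval does not canonically split, and is only determined up to short perturbations of the endpoints. Likewise, there is no ``coordinate change witnessing normalisation'' in \Cref{longcone}; normalisation is only the uniqueness of representations, not a statement that the $v_i$ are standard basis directions. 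So route 1 does not go through as written.

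Your second route (automorphism invariance plus uniqueness of the data $(B,J_1,\dots,J_k)$ of a normalised cone with prescribed $v_i$) is structurally sound, but it relocates rather than resolves the difficulty: the uniqueness statement is not ``provable in a line from independence of the $v_i$'' --- if two presentations give the same set, subtracting representations of a common point only yields $b-b'=\sum_i v_i c_i$ with $b-b'$ short, and one still needs the shortness of the $c_i$ and an endpoint analysis to conclude --- and it is essentially equivalent to the lemma itself. The paper instead argues directly and explicitly: by definable choice pick $a\in C\cap\dcl(A)$; then each endpoint is recovered by the $A$-definable formula $a_i=\sup\{t-s: a+v_it\in C,\ a+v_is\in C\}$, and $B$ is recovered as $\{b: \exists t\in J_1\times\dots\times J_k\ (b+\sum_i v_it_i\in C)\}$ once the $J_i$ are known. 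If you pursue the automorphism route, you must actually prove the uniqueness of the presentation (for which the paper's sup formula is in effect the proof), and you should also note that the automorphism argument takes place in a saturated elementary extension of $\mathcal M$, with $A$-definability then transferred down.
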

\begin{proof}
  By definable choice, there is an element $a\in C$ which is in $\dcl(A)$. Recall that $J_i=(0, a_i)$.  It is then not hard to check that each
\[a_i=\sup \{t-s: t,s\in M, a+v_i t, a+v_i s\in C\}.\]
So every $J_i$ is $A$-definable. Moreover, 
\[B=\set*{b\in M^n: \exists (\bla t1,k)\in \bla J1\times k\;\left( b+\sum_{i=1}^kv_i t_i\in C\right)}\]
is also $A$-definable.
\end{proof}

\noindent\emph{Notation.} If $J=(0, a)$, we denote $\pm J\coloneqq (-a, a)$. Let $C=B+\sum_{i=1}^m v_i t_i|J_i$ be an $m$-long cone. We set
\[
\la C\ra \coloneqq \left\{ \sum_{i=1}^m v_i t_i : t_i \in \pm J_i\right\}.
\]
\begin{fact}[Lemma on Subcones {\cite[Lemma 3.1]{el-sbdgroups}}]\label{lemsubcones}
If $C^\prime=B^\prime+\sum_{i=1}^{m^\prime} w_i t_i | J^\prime_i$  and  $C=B+\sum_{i=1}^m v_i t_i | J_i$ are two long cones such that
$C^\prime\sub C\sub M^n$, then
$\la C^\prime\ra\sub \la C\ra$ (and hence $m^\prime\le m$).
\end{fact}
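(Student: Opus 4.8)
The plan is to reduce the inclusion $\la C'\ra\subseteq\la C\ra$ to two structural facts about a normalised cone $C=B+\sum_{j=1}^{m}v_jt_j|J_j$: that its long directions span, in the linear reduct, a ``subspace'' $V:=\{\sum_{j}v_ju_j:u_j\in\dom(v_j)\}$ which is \emph{transverse} to the short cell, i.e.\ $(B-B)\cap V=\{0\}$; and that any long curve contained in $C$ pointing in a fixed direction $w$ forces $w$ to be a $\Lambda$-combination of the $v_j$. The bracketed inequality $m'\le m$ needs neither: a normalised $k$-cone is the bijective image of the product of its short cell with its $k$ long intervals, so it has $\ldim$ equal to $k$ by \Cref{fact1}, whence $C'\subseteq C$ gives $m'=\ldim C'\le\ldim C=m$.

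First I would handle the directions. Fix any $b_0\in B'$. For each $i\le m'$, the long curve $t\mapsto b_0+w_it$ ($t\in J'_i$) lies in $C'\subseteq C\subseteq B+V$, so $w_it$ remains within the short set $B-b_0$ of $V$ while $t$ runs over the long interval $J'_i$; the linear algebra of the linear reduct (a line running within bounded distance of such a subspace lies inside it) then forces $w_i=\sum_j v_j\circ\lambda_{ij}$ for some $\lambda_{ij}\in\Lambda$, unique by independence of the $v_j$. Next, the transversality: if $b'-b''=\sum_j v_je_j$ with $b',b''\in B$, then the $e_j$ are short — being the image of the short point $b'-b''\in(B-B)\cap V$ under the continuous inverse of the injective map $\vec u\mapsto\sum_j v_ju_j$ — so one may pick $u_j\in J_j$ with $u_j+e_j\in J_j$; then $(b',\vec u)$ and $(b'',\vec u+\vec e)$ have the same image $b'+\sum_j v_ju_j$, so by normalisation they coincide and $b'=b''$, i.e.\ $(B-B)\cap V=\{0\}$.

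Combining: write $b_0=\beta_0+\sum_j v_jc_j$ with $\beta_0\in B$, $c_j\in J_j$, and consider the generic point $x=b_0+\sum_i w_it_i\in C'\subseteq C$ with $t_i\in J'_i$, writing $x=\beta(\vec t)+\sum_j v_ju_j(\vec t)$ with $\beta(\vec t)\in B$, $u_j(\vec t)\in J_j$. Subtracting the expression for $b_0$, using $w_i=\sum_j v_j\circ\lambda_{ij}$, and applying the transversality $(B-B)\cap V=\{0\}$ to the resulting $V$-valued equation, one gets $\beta(\vec t)=\beta_0$ and $u_j(\vec t)=c_j+\sum_i\lambda_{ij}(t_i)$. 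Since $u_j(\vec t)\in J_j=(0,a_j)$ for all $\vec t\in\prod_i J'_i$, this gives $\sum_i\lambda_{ij}(t_i)\in(-c_j,a_j-c_j)$ whenever $t_i\in(0,a'_i)$; writing an arbitrary $s_i\in\pm J'_i$ as a difference of two elements of $J'_i$ and using additivity of the $\lambda_{ij}$ then yields $\sum_i\lambda_{ij}(s_i)\in(-a_j,a_j)=\pm J_j$. Hence every $\sum_i w_is_i\in\la C'\ra$ equals $\sum_j v_j\big(\sum_i\lambda_{ij}(s_i)\big)\in\la C\ra$, as desired.

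The step I expect to be the real obstacle is making these two structural facts fully rigorous. The transversality $(B-B)\cap V=\{0\}$ hinges on the precise interaction of normalisation with the shortness of $B$ (so that the $e_j$ above are genuinely short and fit back inside the $J_j$ when added to interior points), and the directional claim is an instance of the linear algebra of the linear reduct, resting on its semilinear cell structure and the quantifier elimination of \cite{lp}. For the complete argument — of which the above is a sketch — see \cite[Lemma~3.1]{el-sbdgroups}.
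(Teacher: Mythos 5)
This statement is quoted in the paper as a \emph{Fact}, with the proof deferred entirely to \cite[Lemma~3.1]{el-sbdgroups}; there is no in-paper argument to compare yours against. Judged on its own terms, your sketch is a reasonable reconstruction. The reduction to (i) the directional claim $w_i=\sum_j v_j\circ\lambda_{ij}$ and (ii) the transversality $(B-B)\cap V=\{0\}$, followed by the affine bookkeeping $u_j(\vec t)=c_j+\sum_i\lambda_{ij}(t_i)$ and the trick of writing $\pm J_i'$ as $J_i'-J_i'$ to land in $\pm J_j$, is sound and does capture why the \emph{bounded} set $\la C\ra$ (not merely the $\Lambda$-span of the $v_j$) contains $\la C'\ra$. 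Your transversality argument via uniqueness of normalised representations is correct, granted the shortness of the coefficients $e_j$ (which is \cite[Lemma~2.6]{el-sbdgroups}, the same input used in \Cref{lem-Cncone}). Your derivation of $m'\le m$ from \Cref{fact1} is also fine and avoids circularity with the well-definedness of the number of long directions.

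The genuine gap is step (i), which you assert rather than prove. The heuristic ``a long line staying within bounded distance of a subspace lies in it'' is not an argument in the semilinear setting: $V$ is a set of $\Lambda$-combinations of tuples of \emph{partial} endomorphisms, it need not admit a definable complement or projection, ``independence'' of the $v_j$ is not linear independence over a field, and one must also produce the $\lambda_{ij}$ as genuine elements of $\Lambda$ with domains containing $\pm J_i'$ (which your later additivity manipulations silently use). Establishing (i) is essentially the content of the cited lemma and rests on the cell decomposition and quantifier elimination for linear o-minimal structures from \cite{lp}. Since you defer exactly this point to \cite{el-sbdgroups}, the proposal should be read as an honest outline with the central step outsourced, not as a self-contained proof.
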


It follows that the long dimension of a $k$-cone is $k$ (\cite[Lemma 3.6(iii)]{el-sbdgroups}).

\begin{remark}\label{rem-dimC} 
A $k$-long cone $C=B+\sum_{i=1}^kv_i {t_i} {|J_i}$ has dimension $k$ if and only if $B$ is finite. In fact,
$\dim(C)=\dim(B)+ k$.
\end{remark}

\begin{lemma}\label{lem:longbox}
    Let $C\sub M^n$ be an $n$-cone. Then both $C$ and $\la C\ra$ contain boxes of the form \[B=I_1\times\ldots\times I_n,\] where each $I_i$ is a long interval and, in the case of $\la C\ra$, of the form $(-\kappa_i,\kappa_i)$.
\end{lemma}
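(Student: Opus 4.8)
The plan is to reduce everything to the single claim that $C$ contains a box $\prod_i(c_i,d_i)$ with each length $d_i-c_i$ tall. Write $C=b_0+\sum_{i=1}^n v_i t_i\,|\,J_i$ with $J_i=(0,a_i)$; by \Cref{rem-dimC} and $\dim C\le n$, the short cell $B$ must be a point $b_0$, so $C=b_0+\Phi\big(\prod_i J_i\big)$ for the additive map $\Phi(t)\coloneqq\sum_i v_i t_i$ on $\prod_i\dom(v_i)$. Since $\prod_i(0,a_i)=(a_1/2,\dots,a_n/2)+\prod_i(-a_i/2,a_i/2)$ and $\Phi$ is additive, $C=(b_0+p)+\Phi\big(\prod_i(-a_i/2,a_i/2)\big)$ with $p\coloneqq\Phi(a_1/2,\dots,a_n/2)$, while also $\Phi\big(\prod_i(-a_i/2,a_i/2)\big)\subseteq\Phi\big(\prod_i(-a_i,a_i)\big)=\langle C\rangle$. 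So it is enough to produce tall $\kappa_1,\dots,\kappa_n>0$ with $\prod_i(-\kappa_i,\kappa_i)\subseteq\Phi\big(\prod_i(-a_i/2,a_i/2)\big)$: then $\prod_i(-\kappa_i,\kappa_i)\subseteq\langle C\rangle$ is the required symmetric long box, and $(b_0+p)+\prod_i(-\kappa_i,\kappa_i)\subseteq C$ is a long box, since an interval with tall radius is never short (a definable bijection to $R$ would make $2\kappa_i$, hence $\kappa_i$, short). Here one also uses that $a_i/2$ is tall whenever $a_i$ is, as the short elements form a subgroup.

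The key structural input is this: every $\emptyset$-definable partial endomorphism $\lambda$ of $\langle M,<,+\rangle$ is multiplication by a germ $s_\lambda$ in the ordered field $K$ of all such germs, and — crucially — if $\dom(\lambda)$ contains a long interval, then $\lambda$ equals $x\mapsto s_\lambda x$ on all of $\dom(\lambda)$ (a long interval or $M$). This follows from an elementary doubling argument: the maximal subinterval of $\dom(\lambda)$ on which $\lambda$ is $K$-linear cannot be proper, because from linearity on $(-\delta,\delta)$ one deduces linearity on $(-\min(2\delta,d),\min(2\delta,d))$. Let $K_0\subseteq K$ be the set of germs admitting such a long-domain representative. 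Using \Cref{fact-scl} — a nonzero $\lambda\in\Lambda$, and its $\emptyset$-definable inverse, preserve tallness, hence send long intervals to long intervals — one checks that $K_0$ is a subfield of $K$ (closed under sums, composition, and inversion of long-domain representatives). Since $J_i$ is long and $J_i\subseteq\dom(v_i)=\bigcap_k\dom(\lambda_{i,k})$, every component $\lambda_{i,k}$ of $v_i$ has long domain, so $s_{i,k}\coloneqq s_{\lambda_{i,k}}\in K_0$.

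Now run linear algebra over $K_0$. Let $A\coloneqq(s_{i,k})_{k,i}\in M_n(K_0)$, so that $\Phi(t)_k=\sum_i s_{i,k}t_i=(At)_k$ on $\prod_i\dom(v_i)$; testing injectivity of $\Phi$ near $0$, independence of $v_1,\dots,v_n$ forces $A\in GL_n(K_0)$. Put $\beta\coloneqq A^{-1}\in M_n(K_0)$, fix long-domain representatives of its entries and of $g\coloneqq 1+\sum_{i,k}|\beta_{i,k}|\in K_0^{>0}$, and (if some $a_j=\infty$ just ignore that coordinate; if all are, $C=M^n$ is trivial) choose a tall $a^*\in M$ small enough to lie in all the relevant definable domains and with $a^*<a_j/2$ for every $j$. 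Set $\kappa_i\coloneqq(1/(ng))\,a^*$, which is tall since $1/(ng)\in K_0$ is nonzero. For $|y_k|<\kappa_k$ and $t\coloneqq\beta y$, additivity and monotonicity of positive germs give $|t_i|\le\sum_k|\beta_{i,k}|\,|y_k|<g\cdot\big(\sum_k\kappa_k\big)=g\cdot(1/g)\,a^*=a^*<a_i/2$, so $t\in\prod_i(-a_i/2,a_i/2)$; and $\Phi(t)=A(\beta y)=(A\beta)y=y$ by $K_0$-linearity of $\Phi$ there together with $A\beta=I$. Hence $\prod_i(-\kappa_i,\kappa_i)\subseteq\Phi\big(\prod_i(-a_i/2,a_i/2)\big)$, which finishes the proof by the first paragraph.

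The main obstacle is the structural input of the second paragraph: that $\emptyset$-definable endomorphisms with long domain are globally scalar multiplications and that their germs form a field $K_0$ over which matrices can be inverted; this is where the theory of (semi-bounded) o-minimal expansions of ordered groups genuinely enters, everything afterwards being linear algebra over $K_0$ plus careful bookkeeping of the (definable, tall-bounded) domains. A cosmetic alternative for the $\langle C\rangle$ part is to extract from the long box $\prod_i(c_i,d_i)\subseteq C$ an axis-parallel sub-$n$-cone $c'+\prod_i(0,\kappa_i)\subseteq C$ and invoke the Lemma on Subcones (\Cref{lemsubcones}), since $\big\langle c'+\prod_i(0,\kappa_i)\big\rangle=\prod_i(-\kappa_i,\kappa_i)$.
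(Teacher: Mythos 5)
Your proof is correct in outline and, for the $\la C\ra$ half, your ``cosmetic alternative'' (extract an axis-parallel sub-$n$-cone and apply \Cref{lemsubcones}) is exactly what the paper does. The genuine difference lies in how the axis-parallel long box inside $C$ is obtained: the paper simply quotes \cite[Lemma~2.16]{el-sbdgroups}, which directly supplies a subcone $c+\sum_{i=1}^n e_i t_i|(0,\kappa_i)$ with the $\kappa_i$ tall, whereas you re-derive this input from first principles --- partial endomorphisms with long domain are determined on their whole domain by their germ at $0$ (your doubling argument), the germs admitting long-domain representatives form a division ring $K_0$ whose nonzero elements preserve tallness, independence of $v_1,\dots,v_n$ makes $A=(s_{i,k})$ invertible over $K_0$, and explicit estimates with $A^{-1}$ then produce the box. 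This buys self-containedness at the price of re-proving a piece of the structure theory of semi-bounded groups: the substance of your second and third paragraphs is essentially the content of the cited lemma. Two caveats, neither fatal. First, you assert rather than verify that the germs form an ordered \emph{field}; commutativity is in fact not needed (a square matrix over a division ring with trivial kernel is invertible), but the closure of $K_0$ under composition and inversion does rest on the tallness-preservation observation you only sketch, so this step should be written out or referenced. Second, the parenthetical ``if all $a_j=\infty$ then $C=M^n$ is trivial'' is false: $C$ is then $b_0+\Phi\bigl((0,\infty)^n\bigr)$, in general a proper ``quadrant''; moreover in that case one must also ask whether a tall $a^*\in M$ exists at all (it need not, e.g.\ in $\R$ with all bounded semialgebraic sets named, where every element is short and one must allow $\kappa_i=\infty$, i.e.\ $I_i=M$). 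The paper's citation absorbs these degenerate cases; your construction needs a sentence to handle them.
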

\begin{proof}
Let $e_i\in \Lam^n$ be the unit vector, with $\mathsf 1$ in the $i$-th coordinate and $0$ everywhere else, where $\mathsf 1$ is the identical endomorphism with full domain. By  \Cref{rem-dimC}, $\la C\ra$ is an open neighbourhood of $0$. Hence, for each $i\in \set{1, \ldots, n}$ there is a positive $s_i\in M$ such that $e_i s_i\in \la C\ra$.  By \cite[Lemma 2.16]{el-sbdgroups}, there is an $n$-cone $C'\sub C$ of the form $C'=c+\sum_{i=1}^n e_i t_i | (0, \kappa_i)$, for some tall $\kappa_i\in M$. So 
\[c+(0, \kappa_1)\times \ldots\times (0, \kappa_n)\sub C,\]
is the required box for $C$. Moreover, by the Lemma on Subcones (\Cref{lemsubcones}), $\la C'\ra \sub \la C\ra$. We conclude by observing that $\la C'\ra=(-\kappa_1, \kappa_1)\times \ldots \times (-\kappa_n, \kappa_n)$ is also an open box with each $(-\kappa_i, \kappa_i)$ long. 
\end{proof}

Observe also the following fact. Let $\pi:M^{m+n}\to M^m$ denote the projection onto the first $m$ coordinates.
Below, we abuse the notation and write $0$ for a tuple $(0, \ldots, 0)$ of suitable arity.

\begin{lemma}\label{factIC}
Let $C=B+\sum_{i=1}^n v_i {t_i} {|J_i}$ be an $n$-cone contained in $M^{m+n}$,  and $I_1, \ldots, I_n\sub M$ open intervals of the form $(-d_i,d_i)$, such that
\[\{0\}\times I_1\times \ldots\times I_n\sub \la C\ra.\]
Then $\pi(\la C\ra)=\{0\}$. In other words, for each $v_i$ there is $u_i\in \Lam^n$ such that $v_i=(0, u_i)$.
\end{lemma}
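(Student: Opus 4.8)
The plan is to deduce the statement from a dimension count for the ``linear'' map that underlies $\la C\ra$. Write $\pi\colon M^{m+n}\to M^m$ for the given projection and, for $v=(\lambda^{(1)},\dots,\lambda^{(m+n)})\in\Lambda^{m+n}$, set $\pi(v)\coloneqq(\lambda^{(1)},\dots,\lambda^{(m)})\in\Lambda^m$, so that $\pi(vt)=\pi(v)t$ for every $t\in\dom(v)$. Put $P\coloneqq(\pm J_1)\times\dots\times(\pm J_n)$ and consider the definable map $\Phi\colon P\to M^{m+n}$, $\Phi(t_1,\dots,t_n)=\sum_{i=1}^n v_i t_i$, whose image is exactly $\la C\ra$. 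The first step is the elementary identity
\[
\la C\ra\cap(\{0\}\times M^n)=\Phi(S),\qquad\text{where } S\coloneqq\Bigl\{\vec t\in P:\textstyle\sum_{i=1}^n\pi(v_i)t_i=0\Bigr\},
\]
which follows at once from additivity of $\pi$. By hypothesis $\Phi(S)$ contains the set $\{0\}\times I_1\times\dots\times I_n$, which has dimension $n$ (each $I_i$ being a non-degenerate interval), while $\dim\Phi(S)\le\dim S\le\dim P\le n$; hence $\dim S=n$.

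Next I would argue that $\dim S=n$ forces $\pi(v_i)=0$ for every $i$. Suppose not: then some coordinate $\lambda\coloneqq\lambda_i^{(j)}$ with $j\le m$ of some $v_i$ is a non-zero $\es$-definable partial endomorphism of $\la M,<,+,0\ra$. Such a $\lambda$ is injective on its domain: $\ker\lambda$ is a definable convex subgroup of $(M,+)$ contained in the bounded interval $\dom\lambda$, hence is trivial, since by o-minimality the only definable convex subgroups of $(M,+)$ are $\{0\}$ and $M$ (equivalently, $\lambda$ is strictly monotone on $\dom\lambda$ by the monotonicity theorem). Now $S$ is cut out inside $P\subseteq M^n$ by, in particular, the equation $\sum_{k=1}^n\lambda_k^{(j)}(t_k)=0$, in which $\lambda_i^{(j)}=\lambda\neq 0$; for each fixed value of $(t_k)_{k\neq i}$ this equation has at most one solution $t_i\in\pm J_i$, so the restriction to $S$ of the coordinate projection $M^n\to\prod_{k\neq i}M$ has finite fibres, whence $\dim S\le n-1$ — contradicting $\dim S=n$.

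Therefore $\pi(v_i)=0$ for all $i$, that is, $v_i=(0,u_i)$ for some $u_i\in\Lambda^n$ (necessarily again independent, though this is not needed), and then $\pi(\la C\ra)=\{\sum_{i=1}^n\pi(v_i)t_i:\vec t\in P\}=\{0\}$, as required. The only point that does not reduce to routine o-minimal dimension bookkeeping — monotonicity and additivity of $\dim$, and its invariance under definable finite-to-one maps — is the injectivity of non-zero $\es$-definable partial endomorphisms on their domains, which I expect to be the sole place needing a short separate argument, and which is in any case standard in this setting.
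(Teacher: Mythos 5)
Your proof is correct, and it takes a genuinely different route from the paper's. The paper argues topologically: since $I=\{0\}\times I_1\times\cdots\times I_n$ and $\la C\ra$ both have dimension $n$, the set $I$ is a relatively open neighbourhood of $0$ in $\la C\ra$; continuity of $t\mapsto v_it$ at $0$ then yields, for each $i$, a single non-zero $t_i$ with $v_it_i\in I$, so the first $m$ coordinates of $v_it_i$ vanish, and the first $m$ coordinates of $v_i$ are killed by exactly the fact you isolate (a non-zero $\es$-definable partial endomorphism vanishes only at $0$) — which the paper uses silently. Your version replaces this local step by a global fibre/dimension count on the solution set $S$: a non-zero coordinate $\lambda$ of some $\pi(v_i)$ would make the projection of $S$ away from the $i$-th coordinate finite-to-one, forcing $\dim S\le n-1$ against $\dim S=n$. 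Both arguments are short and rest on the same key fact; yours has the mild advantage of not needing the (slightly delicate) claim that $I$ is a relative neighbourhood of $0$ in $\la C\ra$. One small caveat on your justification of injectivity: for $\lambda$ with bounded domain, $\ker\lambda$ is not literally a subgroup of $(M,+)$, so the clean argument is the one in your parenthetical — additivity gives $\lambda(x+h)-\lambda(x)=\lambda(h)$, so either $\lambda$ vanishes near $0$ (hence everywhere on its convex domain) or it is strictly monotone, hence injective.
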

\begin{proof} Let $I=\{0\}\times I_1\times \ldots\times I_n$. Since $I\sub \la C\ra$ has dimension $n$, the set $I$ is a relatively open neighbourhood of the identity in $\la C \ra$. Hence, for every $i\in \{1, \ldots, n\}$, since $v_i$ is continuous, there is a non-zero $t_i\in M$ such that the element $c_i = v_i t_i$ belongs to $I$. This implies that, for every $j\in \{1, \ldots, m\}$, the $j$-th coordinate of $c_i$ equals $0$. Since $t_i\ne 0$, it follows that the $j$-th coordinate of $v_i$ is $0$.
\end{proof}

We will also need the following lemma.

\begin{lemma}\label{lem-Cncone} Let $C\sub M^n$ be an $n$-cone of the form
\[C=\sum_{i=1}^n v_i t_i| J_i,\]
where $J_i=(0, a_i)$.
Let $b\in M^n$ be a short tuple.
Then
\[\ldim ((C+b)\sm C)<n.\]
\end{lemma}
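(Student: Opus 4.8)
The plan is to reduce the statement to a one-dimensional computation along each cone-direction, then glue. Write $C + b = \sum_{i=1}^n v_i t_i \mid J_i'$, where $J_i' = (b_i', a_i + b_i')$ is the translate of $J_i$ along the $i$-th parameter; here I am using that $C$ has no short-cell part, so a point of $C$ is determined by its parameters $t_1, \dots, t_n$ and translating by $b$ just shifts each $J_i$ to $J_i'$. The key observation is that a point $x = \sum_i v_i t_i$ with $t_i \in J_i'$ lies in $C$ precisely when $t_i \in J_i \cap J_i' = (b_i'^{>0}, a_i)$ for every $i$ — this uses independence of the $v_i$, which guarantees the representation of $x$ in terms of the $t_i$ is unique (so membership in $C$ can be checked coordinate-by-coordinate on the parameters). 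Hence $(C+b) \setminus C$ is the set of $x = \sum_i v_i t_i$ with $t_i \in J_i'$ for all $i$ but $t_{i_0} \notin J_{i_0}$ for at least one $i_0$; the failure $t_{i_0} \notin J_{i_0}$ while $t_{i_0} \in J_{i_0}' = (b_{i_0}', a_{i_0} + b_{i_0}')$ forces $t_{i_0} \in (b_{i_0}', 0]$ (if $b_{i_0}' < 0$) or $t_{i_0} \in [a_{i_0}, a_{i_0} + b_{i_0}')$ (if $b_{i_0}' > 0$), and in either case $t_{i_0}$ ranges over an interval of length at most $\abs{b_{i_0}'} \le \abs{b}$, which is short.

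From here I would argue as follows. Decompose $(C+b)\setminus C = \bigcup_{i_0=1}^n C_{i_0}$, where $C_{i_0}$ is the piece on which the $i_0$-th parameter is ``bad'': $C_{i_0} = \{ \sum_i v_i t_i : t_i \in J_i' \text{ for } i \neq i_0,\ t_{i_0} \in K_{i_0}\}$ with $K_{i_0}$ the short interval identified above. By \Cref{fact1}(1) it suffices to bound $\ldim C_{i_0}$. Now $C_{i_0}$ is the image under the definable (injective, by independence) map $(t_1, \dots, t_n) \mapsto \sum_i v_i t_i$ of the box $\prod_{i \neq i_0} J_i' \times K_{i_0}$. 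By \Cref{fact1}(\ref{point:fact12}) this image has the same long dimension as the box, and by \Cref{fact1}(1) the box has long dimension $\sum_{i \neq i_0} \ldim(J_i') + \ldim(K_{i_0})$. Each $J_i'$ is a long interval (being a translate of the long interval $J_i$), so $\ldim(J_i') = 1$, while $K_{i_0}$ is short, so $\ldim(K_{i_0}) = 0$ by the characterisation of long dimension recalled before \Cref{fact1}. Thus $\ldim C_{i_0} \le n - 1$, and therefore $\ldim((C+b)\setminus C) \le n-1 < n$, as required.

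Two small points need care and I would address them explicitly. First, one must check that a translate of a long interval by a short element is still long: if $J = (0, a)$ is long and $J' = (b', a + b')$ with $b'$ short, then $J$ and $J'$ differ by the short set of points between $0$ and $b'$ together with the short set between $a$ and $a+b'$, so if $J'$ were short then $J \subseteq J' \cup (\text{short sets})$ would be a finite union of short sets, hence (by the dimension characterisation, or directly by the behaviour of $\ldim$ under unions) short — a contradiction. Second, one must justify that the representation $x = \sum_i v_i t_i$ is unique for $x$ in the relevant region: this is exactly independence of $v_1, \dots, v_n$, since if $\sum_i v_i t_i = \sum_i v_i s_i$ then $\sum_i v_i(t_i - s_i) = 0$ forces all $t_i = s_i$ (after checking the differences lie in the relevant domains, which they do as everything sits inside $\dom(v_i) \supseteq J_i \cup J_i'$ up to the short discrepancy). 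The main obstacle is bookkeeping rather than conceptual: correctly splitting $(C+b) \setminus C$ into the $n$ pieces $C_{i_0}$ and verifying that on each piece exactly one parameter-interval has been replaced by a short one while the others remain long. Once the combinatorics of the parameter-boxes is set up cleanly, the dimension count is immediate from \Cref{fact1}.
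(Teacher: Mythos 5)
Your proposal follows the same overall strategy as the paper: rewrite $C+b$ by shifting the parameter intervals, observe that a point of $(C+b)\setminus C$ must have at least one parameter in a short "bad" set, split into $n$ pieces accordingly, and bound the long dimension of each piece via the product and image formulas of \Cref{fact1}. The dimension count at the end is correct. However, there is a genuine gap at the very first step. Writing $C+b=\sum_{i=1}^n v_i t_i\mid J_i'$ with $J_i'=(b_i',a_i+b_i')$ presupposes that the short tuple $b\in M^n$ can be written as $b=\sum_{i=1}^n v_i b_i'$ with each $b_i'$ short. Neither the existence nor the shortness of the $b_i'$ is automatic: the $v_i$ are merely independent tuples of partial endomorphisms, so a priori $b$ need not lie in $\la C\ra$ at all. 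The paper devotes a separate Claim to exactly this point: by \Cref{lem:longbox}, $\la C\ra$ contains a box $(-\kappa_1,\kappa_1)\times\ldots\times(-\kappa_n,\kappa_n)$ with every $\kappa_i$ tall, hence contains every short tuple, which gives existence of the $b_i'$; their shortness then requires \cite[Lemma 2.6]{el-sbdgroups}. Your substitute justification, $\abs{b_{i_0}'}\le\abs b$, is false in general: the coefficients of $b$ with respect to the $v_i$ need not be bounded by $\abs b$ (already for $n=1$ and $v_1=\lambda$ a contracting scalar one gets $\abs{b_1'}>\abs b$), so shortness of the coefficients genuinely needs the cited lemma rather than a length comparison.

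Two smaller remarks. First, your worry about uniqueness of the representation $x=\sum_i v_i t_i$ is unnecessary: for the dimension bound one only needs the containment $(C+b)\setminus C\sub\bigcup_{i_0}C_{i_0}$, which follows from a single representation of each point (if all its parameters lay in $J_{i_0}$ the point would be in $C$); this is how the paper phrases it, working with containments $C+b\sub T$ and $(C+b)\setminus C\sub D$ rather than an exact parameter-wise description. Second, your argument that a translate of a long interval by a short element is still long is fine, though it is not actually needed for the upper bound $\ldim C_{i_0}\le n-1$, since \Cref{fact1} only requires $\ldim J_i'\le 1$ there.
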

\begin{proof} We start with a claim.

  \begin{claim*}
    There are short $b_1, \ldots, b_n\in M$, such that
    \[b=v_1 b_1 +\ldots + v_n b_n.\]
  \end{claim*}
\begin{claimproof}
Let $B=(-\kappa_1, \kappa_1)\times \ldots \times (-\kappa_n, \kappa_n)\sub \la C\ra$ be as in \Cref{lem:longbox}, with each $(-\kappa_i, \kappa_i)$ long. Since $b\in M^n$ is short, we have $b\in B\sub \la C\ra$. Therefore, there are $b_1, \ldots, b_n\in M$, such that
\[b=v_1 b_1 +\ldots + v_n b_n.\]  
By \cite[Lemma 2.6]{el-sbdgroups}, $b_1, \ldots, b_n$ are all short.
\end{claimproof}

For ease of notation, for $m\in M$, let us convene that $\infty+m=\infty$, and that $[\infty, \infty)$ denotes the empty set. Fix $\bla b1,n$ as in the Claim. Then $C+b$ is contained in the set
\[T=\left\{\sum_{i=1}^n v_i t_i: \forall i\in \set{1,\ldots, n}\; t_i\in (-\abs{b_i}, a_i+\abs{b_i}) \right\}.\]
Hence, $(C+b)\sm C$ is contained in the set
\[D\coloneqq\left\{\sum_{i=1}^n v_i t_i\in T: \exists i_0\in\{1, \ldots, n\}\; t_{i_0}\not\in (0, a_{i_0}) \right\}.\]
For $H\sub \set{1,\ldots,n}$, define
\begin{align*}
  C_H=\Bigg\{\sum_{i=1}^n v_i t_i : \quad & \forall i\notin H\; t_i\in (-\abs{b_i},  a_i+\abs{b_i}),  \text{ and}   \\ & \forall i\in H \; t_i\in (-\abs{b_i}, 0]\cup [a_i, a_i +\abs{b_i})\Bigg\}.
\end{align*}
By definition, $D=\bigcup\set{C_H :\emptyset \ne H\subseteq \set{1,\ldots, n}} $.
It is not hard to check that if $H\ne \emptyset$ then $C_H$ is in definable bijection with a set of the form $Y=Y_1\times \ldots \times Y_n$, with at least one of the $Y_i$ being short and of the form
\[(-\abs{b_i}, 0]\cup [a_i, a_i+\abs{b_i}),\] via the map $\tau : Y\to M^n, \, \tau(t_1, \ldots, t_n)=(v_1 t_1, \ldots, v_n t_n)$. It follows that $\ldim Y<n$, implying that $\ldim (C+b)\sm C<n$, as needed.
\end{proof}

\begin{lemma}\label{lem-Cncone2} Let $C\sub M^n$ be an $n$-cone of the form
\[C=\sum_{i=1}^n v_i t_i| J_i,\]
where $J_i=(0, a_i)$.
Let $B\subseteq M^n$ be a short cell, and denote
\[T\coloneqq\bigcup_{b\in B} \left(b+ C\right)\qquad\text{and}\qquad S\coloneqq\bigcap_{b\in B} \left(b+C\right).\]
Then $\ldim (T\sm S)<n$.
\end{lemma}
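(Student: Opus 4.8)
The plan is to translate the problem onto \Cref{lem-Cncone}, exploiting that the short cell $B$ is uniformly thin. First note that
\[
T\setminus S\ \subseteq\ B+\bigcup_{c\in B-B}\bigl((C+c)\setminus C\bigr),
\]
since whenever $x\in C+b_1$ and $x\notin C+b_2$ with $b_1,b_2\in B$, the point $x-b_2$ lies in $(C+(b_1-b_2))\setminus C$. As $B$ is a cell it is definably connected, so each projection $\pi_i(B)$ is a point or an interval; since $B$ is short, such an interval is short, hence — translating it to an interval of the form $(0,\ell)$ — has short length, whence $B-B$ is contained in a short box $\prod_i(-\mu_i,\mu_i)$, and in particular every $c\in B-B$ is a short tuple.

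Next I would rerun the proof of \Cref{lem-Cncone} uniformly in $c\in B-B$. For such $c$, write $c=\sum_i v_i c_i$ with the $c_i$ short, as in the Claim in that proof. The assignments $c\mapsto c_i$ are definable on the short box $\prod_i(-\mu_i,\mu_i)$, so $\{c_i(c):c\in B-B\}$ is a definable set consisting of short elements, hence bounded by a short $\nu_i$. Exactly as in that proof, $(C+c)\setminus C$ is contained in a finite union $\bigcup_{\emptyset\ne H\subseteq\{1,\dots,n\}}C^c_H$; replacing every $\abs{c_i}$ by $\nu_i$ enlarges $C^c_H$ to a set $C_H$ independent of $c$ which, via $(t_1,\dots,t_n)\mapsto(v_1 t_1,\dots,v_n t_n)$, is in definable bijection with a product $\prod_i Y_i$ in which $Y_i$ is a long interval for $i\notin H$ and a union of two short intervals (hence short) for $i\in H$. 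By \Cref{fact1}, $\ldim C_H=n-\abs H\le n-1$; since $T\setminus S\subseteq B+\bigcup_{\emptyset\ne H}C_H$, a further application of \Cref{fact1} gives $\ldim(T\setminus S)\le\ldim B+\max_{\emptyset\ne H}\ldim C_H\le 0+(n-1)<n$.

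The main obstacle is the uniformity in the second paragraph: the coefficient bounds $\nu_i$ must be chosen once and for all, which uses that $B$ is \emph{connected} — so that the image of the short box under the definable map $c\mapsto c_i$, being a definable set of short elements, is bounded by a single short element — rather than merely short; and, as in \Cref{lem-Cncone}, one must check that the truncated sets $C_H$ genuinely split as products of intervals with at least one short factor, the handling of the domains $\dom(v_i)$ being identical there. Alternatively, granting the dimension formula for definable families over the short base $B-B$, the estimate $\ldim\bigl(\bigcup_{c\in B-B}((C+c)\setminus C)\bigr)\le\ldim(B-B)+\max_{c}\ldim\bigl((C+c)\setminus C\bigr)<n$ is immediate from \Cref{lem-Cncone}, and then $\ldim(T\setminus S)<n$ as above.
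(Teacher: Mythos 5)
Your proof is correct and, at its core, takes the same route as the paper: both reduce $T\setminus S$ to translates $(C+c)\setminus C$ with $c\in B-B$ a short tuple and then run (or cite) \Cref{lem-Cncone}. The difference lies in how the union over the infinite family is handled. The paper writes $T\setminus S=\bigcup_{b,b'\in B}\,(b+C)\setminus(b'+C)$, applies \Cref{lem-Cncone} to each piece, and disposes of the union by the dimension formula for definable families over the short index set $B\times B$ (the ``standard application of \cite[Lemma~4.2(i)]{el-sbdgroups}''). You instead uniformise: the coefficients $c_i$ in $c=\sum_i v_ic_i$ range over definable sets of short elements, hence admit common short bounds $\nu_i$, and replacing $\abs{c_i}$ by $\nu_i$ produces a single definable superset $B+\bigcup_{\emptyset\neq H}C_H$ of $T\setminus S$ whose long dimension is $<n$ by \Cref{fact1} alone. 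This buys independence from the family formula, at the price of re-running the proof of \Cref{lem-Cncone} rather than quoting it; your closing alternative is precisely the paper's argument. Two minor points. First, the fact that a definable set of short elements is bounded by a short element follows from definability alone (a definable interval all of whose elements are short must have short length, since otherwise it would contain tall elements); connectedness of $B$ is not what is needed there, but it \emph{is} needed — and you correctly use it — at the earlier step showing that $B-B$ consists of short tuples, a point the paper's own proof leaves implicit when it asserts that $b''=b-b'$ is short. Second, for the dimension count it suffices that $C_H$ is the \emph{image} of $\prod_iY_i$ under $(t_1,\dots,t_n)\mapsto\sum_iv_it_i$, so $\ldim C_H\le n-\abs{H}$ by \Cref{fact1}(\ref{point:fact12}); the bijectivity you assert is not needed.
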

\begin{proof} We have
\[T\sm S=\bigcup_{b\in B} \bigcup_{b'\in B} (b+C)\sm \left(b'+ C\right).\]
Hence, by a standard application of~\cite[Lemma 4.2(i)]{el-sbdgroups}, it suffices to prove that for every $b, b'\in B$,
\[\ldim \, (b+C)\sm \left(b'+ C\right)<n.\]
Subtracting $b'$ from both sets, it is equivalent to show that for $b''=b-b'$, we have
\[\ldim \, (b''+C)\sm C<n.\]
Since $b,b'$ belong to the short cell $B$, the element $b''$ is short. Hence the above inequality is given by  \Cref{lem-Cncone}.
\end{proof}

\subsection{Orthogonality and short closure}

\begin{lemma}\label{lem-scllin}
Let $c\in M^n$ be $\scl$-independent over $A$. Then its type over $A$ is determined by its type over $A$ in the linear reduct~${\cal M}_\mathrm{lin}=\langle M, <, +, 0, \{\lam\}_{\lam\in\Lambda} \rangle$. That is, if $X$ is an $A$-definable set containing $c$, then there is an $\cal L_\mathrm{lin}(A)$-definable set $Y\sub X$ containing $c$.
\end{lemma}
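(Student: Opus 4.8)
The plan is to reduce the statement to the Structure Theorem (\Cref{strtheorem}) together with the Lemma on Subcones (\Cref{lemsubcones}). Let $X$ be $A$-definable with $c\in X$. By \Cref{strtheorem}, $X$ is a finite union of $A$-definable cones, so $c$ belongs to some $A$-definable cone $C = B + \sum_{i=1}^k v_i t_i \,|\, J_i$, with $B$ an $A$-definable short cell by \Cref{fact-Adefcone}. It suffices to find an $\cal L_\mathrm{lin}(A)$-definable $Y\sub C$ with $c\in Y$, since then $Y\sub X$.

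First I would analyse the constraints imposed by $\scl$-independence. Write $c = b + \sum_{i=1}^k v_i s_i$ for the unique $b\in B$ and $s_i\in J_i$. Since $B$ is short and $A$-definable, every coordinate of $b$ lies in $\scl(A)$; hence $c$ and $\sum v_i s_i$ differ by an element of $\scl(A)^n$, and by \Cref{fact-scl} one sees that $\rk(c/A)$ depends only on the `long part' $\sum v_i s_i$. Because the $v_i$ are independent and the $J_i$ are long, the tuple $\sum v_i s_i$ has $\scl$-rank equal to the number of $s_i$ that are tall; since $\rk(c/A) = n$ (the tuple is $\scl$-independent over $A$ and has length $n$), we need $k\ge n$, and in fact we may extract an $n$-subcone: using \Cref{lemsubcones} and the argument in \cite[Lemma~2.16]{el-sbdgroups} (as in the proof of \Cref{lem:longbox}), there is an $A$-definable $n$-subcone $C'\sub C$ with $c\in C'$, of the form $C' = b_0 + \sum_{i=1}^n e_i'\,t_i \,|\,(0,\kappa_i)$ after an $\cal L_\mathrm{lin}(\es)$-change of coordinates, whose defining data $b_0$ and $\kappa_i$ are short and $A$-definable. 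The key point is that such an $n$-subcone, once we pass to a slightly smaller box, is \emph{itself} $\cal L_\mathrm{lin}(A)$-definable: a box $b_0 + (0,\kappa_1)\times\cdots\times(0,\kappa_n)$ is cut out by the linear formulas $0 < x_i - (b_0)_i < \kappa_i$, and $b_0,\kappa_i\in\dcl(A)$. One must take care that $c$ actually lies in this smaller box, which follows because $c$ is an \emph{interior} point of $C'$ (its long coordinates $s_i$ are strictly inside the long intervals, since a tall element is not an endpoint of a long interval definable over $A$ when $c$ is $\scl$-independent over $A$), so shrinking each $\kappa_i$ to an $A$-definable value below $s_i$'s successor still captures $c$; more cleanly, take $Y$ to be the box $\{x : \forall i\ (0 < x_i - (b_0)_i)\ \wedge\ (x_i - (b_0)_i < \kappa_i)\}$ directly — this is already $\cal L_\mathrm{lin}(A)$-definable and contains $c$.

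Thus the proof outline is: (1) invoke \Cref{strtheorem} and \Cref{fact-Adefcone} to put $c$ inside an $A$-definable cone $C$ with short $A$-definable cell $B$; (2) use $\scl$-independence to force $C$ to have a long dimension $\ge n$ and, via \Cref{lemsubcones} plus \cite[Lemma~2.16]{el-sbdgroups}, extract an $A$-definable $n$-subcone around $c$ which after an $\cal L_\mathrm{lin}$-coordinate change is an $A$-definable long box; (3) observe the long box is $\cal L_\mathrm{lin}(A)$-definable and is the desired $Y$.

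The main obstacle I anticipate is step (2): verifying rigorously that $c$ lands in an $A$-definable $n$-subcone rather than merely some $n$-subcone, and that the change of coordinates straightening the directions $v_i$ to unit vectors is $\cal L_\mathrm{lin}(\es)$-definable and preserves $A$-definability. One has to be careful that the short cell $B$ genuinely contributes nothing to the $\scl$-rank (handled by \Cref{fact-scl}) and that no coordinate of $c$ accidentally becomes short after the coordinate change — but since any $\cal L_\mathrm{lin}$-coordinate change is built from partial endomorphisms in $\Lambda$, it maps $\scl(A)$-tuples to $\scl(A)$-tuples and preserves $\scl$-rank by \cite[Lemma~2.6]{el-sbdgroups}, so $\scl$-independence over $A$ is preserved. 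Assembling these pieces carefully is the real content; the final box extraction is then immediate.
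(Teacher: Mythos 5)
Your opening move --- invoke the Structure Theorem to replace $X$ by a single $A$-definable cone $C=B+\sum_{i=1}^k v_i t_i|J_i$ containing $c$, and use $\scl$-independence to force $k\ge n$ --- is exactly how the paper starts. But your central step (2) has a genuine gap. You propose to extract an ``$A$-definable $n$-subcone $C'\sub C$ with $c\in C'$'' via \Cref{lemsubcones} and \cite[Lemma~2.16]{el-sbdgroups}, as in the proof of \Cref{lem:longbox}. Neither tool delivers this: \Cref{lem:longbox} produces \emph{some} long box inside an $n$-cone, with no control over its parameters and no guarantee that it contains the prescribed point $c$. The paper itself flags precisely this obstruction in the footnote inside the proof of \Cref{lem-bc} (``$I$ need not be $A$-definable and need not contain $c$''). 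You acknowledge this as ``the main obstacle'' but do not resolve it, and the subsequent coordinate-change and box-shrinking discussion does not supply the missing $A$-definability.

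What closes the gap --- and makes the whole subcone/box machinery unnecessary --- is a dimension count you do not perform. Since $X\sub M^n$, we have $\dim X\le n$, and by \Cref{rem-dimC}, $\dim X=\dim B+k$. Combined with $k\ge n$ this forces $k=n$ and $\dim B=0$; as $B$ is a cell, $B=\{b\}$ is a singleton. The cone $\{b\}+\sum_{i=1}^n v_i t_i|J_i$ is then \emph{itself} definable in the linear reduct (the $v_i$ lie in $\Lambda^n$ and the $J_i$ are intervals), and \Cref{fact-Adefcone} gives that $b$ and the $J_i$ are $A$-definable, so one may take $Y=C$. No subcone extraction, no straightening of the $v_i$, and no interiority argument for $c$ are needed. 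As a minor further point, your claim that $\sum v_i s_i$ has $\scl$-rank equal to the number of tall $s_i$ is not quite right (a tall element may still lie in $\scl(A)$, e.g.\ if it is $A$-definable); the correct statement is that the rank is bounded by $\rk(s_1,\ldots,s_k/A)\le k$, which is all the inequality $k\ge n$ requires.
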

\begin{proof}
By \Cref{strtheorem}, we may assume that $X$ is an $A$-definable cone $X=B+\sum_{i=1}^kv_i {t_i} {|J_i}$. Since $c$ is $\scl$-independent over $A$, it must be $k\ge n$, and since $X\sub M^n$, it follows that $k=n$. Moreover, by \Cref{rem-dimC}, $\dim X=\dim B+n$. Hence $\dim B=0$, and since $B$ is a cell, it follows that $B=\{b\}$ is a singleton. Hence $X=\{b\}+\sum_{i=1}^kv_i {t_i} {|J_i}$ is definable in the linear reduct, and we conclude by \Cref{fact-Adefcone}.
\end{proof}

\begin{lemma}\label{lem-bc}
Fix $A\sub  M$. Let $b\in \ M^m$ have $\rk(b/A)=0$ and $c\in  M^n$ have $\rk(c/A)=n$.
Let $X\sub  M^{m+n}$ be an $A$-definable set containing $(b,c)$. Then there are $A$-definable sets $Y\subseteq M^m$ and $Z\subseteq M^n$ such that  $(b,c)\in Y\times Z\subseteq X$.
\end{lemma}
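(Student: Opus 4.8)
The plan is to combine the Structure Theorem (\Cref{strtheorem}) with the geometry of long cones established above, in particular \Cref{lem-Cncone2}, which controls the ``boundary'' of the intersection of a translated $n$-cone. The key observation is that a set of full $\ldim$ cannot be a union of short ``boundary'' pieces, so a definable set containing a tuple $(b,c)$ with $b$ short and $c$ of full $\scl$-rank must contain a product box around it.

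First I would apply \Cref{strtheorem} to write $X$ as a finite union $X=\bigcup_j C_j$ of $A$-definable cones $C_j\subseteq M^{m+n}$; since $(b,c)\in X$ and $\rk((b,c)/A)\geq \rk(c/A)=n$, the tuple lies in some $C_j$ with $\ldim(C_j)\geq n$, so after relabelling we may assume $(b,c)\in C$ with $C$ an $A$-definable $\ell$-cone for some $\ell\geq n$, say $C=B_0+\sum_{i=1}^\ell v_i t_i|J_i$. Because $\rk(c/A)=n$ and the projection onto the last $n$ coordinates cannot raise $\ldim$ (\Cref{fact1}(\ref{point:fact12})), we should be able to arrange that $\ell=n$ and that, after a linear change of variables via \Cref{factIC}-type reasoning, the cone directions $v_i$ split off the last $n$ coordinates; more carefully, I would project $C$ to the last $n$ coordinates to obtain an $A$-definable set of $\ldim$ exactly $n$ containing $c$, intersect with the analogous picture from $b$, and reduce to the situation where $C$ itself is (in definable bijection with) a product of a short cell $B$ in the first $m$ coordinates and an $n$-cone $C'=\sum_{i=1}^n v_i' t_i|J_i$ in the last $n$. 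The short tuple $b$ lies in the short cell $B$, giving the ``$Y$'' factor candidate; the issue is that $C'$ depends on the first coordinates through translations $b'+C'$ as $b'$ ranges over $B$.

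The heart of the argument is then \Cref{lem-Cncone2}: with $T=\bigcup_{b'\in B}(b'+C')$ and $S=\bigcap_{b'\in B}(b'+C')$ we have $\ldim(T\setminus S)<n$. Since $c$ satisfies $\rk(c/A)=n$ and $T\setminus S$ is $A$-definable of $\ldim<n$, the tuple $c$ cannot lie in $T\setminus S$, hence $c\in S$, i.e.\ $b'+C'\ni c$ — equivalently $c-b'\in C'$ — for \emph{every} $b'\in B$. But $S=\bigcap_{b'\in B}(b'+C')$ is itself $A$-definable and contains $c$, and moreover $B+S\subseteq T\subseteq$ (the last-$n$-coordinate slice of) $X$ in the relevant sense. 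I would take $Y\coloneqq B$ (or a suitable $A$-definable short cell containing $b$ coming from the first-coordinate part of $C$) and $Z\coloneqq S$; then $Y$ is $A$-definable and contains $b$, $Z$ is $A$-definable and contains $c$, and by construction $Y\times Z\subseteq X$. One has to be slightly careful: $X$ need not literally be a single cone that splits as a product, so the precise bookkeeping is to choose $Z$ as the set of last-$n$-coordinate tuples $z$ such that $B\times\{z\}\subseteq X$ (an $A$-definable set), show $c\in Z$ using that its complement inside the relevant cone has $\ldim<n$ by \Cref{lem-Cncone2}, and then $Y=B$, $Z$ as just defined work.

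The main obstacle I anticipate is the reduction step: passing from an arbitrary $A$-definable cone $C\subseteq M^{m+n}$ containing $(b,c)$ to one whose long-cone directions are ``aligned'' with the coordinate splitting $M^m\times M^n$, so that \Cref{lem-Cncone2} applies cleanly. This is exactly where \Cref{factIC} and \Cref{lem:longbox} should do the work — using that $\la C\ra$ contains a long box and that the short cell $B_0$ of $C$ has $\ldim 0$ while the $c$-part forces $n$ independent long directions, one deduces the $v_i$ can be taken of the form $(0,u_i)$ after absorbing the $b$-part into a short cell. If a clean alignment is not immediately available, the fallback is to work up to $A$-definable bijection and track the product structure through that bijection, which is harmless since $Y\times Z$ need only be \emph{contained} in $X$, not equal to it, and both notions of ``short cell'' and ``$\ldim<n$'' are bijection-invariant by \Cref{fact1}.
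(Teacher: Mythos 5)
Your proposal follows essentially the same route as the paper's proof: reduce via the Structure Theorem to a single $A$-definable $n$-cone, show its long directions are of the form $(0,u_i)$ using \Cref{lem:longbox} and \Cref{factIC}, and then use \Cref{lem-Cncone2} together with the $\scl$-independence of $c$ over $A$ to place $c$ in $\bigcap_{\beta\in B}(p(\beta)+C)$, which serves as $Z$ while $\pi(B)$ serves as $Y$. The only detail you leave implicit is how to produce the box $\{0\}\times I_1\times\ldots\times I_n\subseteq\langle C\rangle$ needed to invoke \Cref{factIC} — the paper obtains it from an $n$-cone inside the fibre $X_b$ over $b$ together with the Lemma on Subcones (\Cref{lemsubcones}) — but this is precisely the step you flagged, and the tools you name suffice.
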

\begin{proof} By \Cref{factcY}, we may assume $\ldim X\le n$. Indeed, we may intersect $X$ with an $A$-definable set $X'$ that contains $(b,c)$ and has $\ldim X'=n$. Since $\rk(c/A)=n$, we actually have $\ldim X=n$. Now, by \Cref{strtheorem}, $X$ is a finite union of $A$-definable cones, and hence we may assume it is one such $n$-cone, $X=B+\sum_{i=1}^n v_i {t_i} {|J_i}$, with $J_i=(0, a_i)$. Let $\pi:M^{m+n}\to M^m$ be the projection onto the first $m$ coordinates.

  \begin{claim}\label{claim:zeroproj}
    We have $\pi(\la X\ra)=\{0\}$, that is, each $v_i$ is of the form $v_i= (0, u_i)$, for some $u_i\in \Lam^n$.
  \end{claim}
\begin{claimproof}[\it Proof of \Cref{claim:zeroproj}] By the Structure Theorem, the fiber $X_b$ of $X$ above $b$ is a finite union of $Ab$-definable cones. Since $b\in \scl(A)$ and $c$ is $\scl$-independent over $A$, it is contained in some $n$-cone $K$ among them. By \Cref{lem:longbox}, there is an open box $I=I_1\times \ldots\times I_n\sub  K$, where each $I_i$ is a long interval.\footnote{Note that even if it were $\pi(B)\times I\sub X$, we could not conclude the proof of the lemma, because $I$ need not be $A$-definable and need not contain $c$.}  The cone $T=\{b\}\times I$ is then contained in  $X$,  with $\la T\ra=\{ 0\}\times J_1\times \ldots\times J_n$, for some open intervals $J_1, \ldots, J_n\sub M $. By the Lemma on Subcones (\Cref{lemsubcones}), $\la T\ra\sub \la X\ra$. We conclude by \Cref{factIC}.
\end{claimproof}
For $i=1, \ldots, n$, let $u_i\in \Lam^n$ be given by \Cref{claim:zeroproj}. Writing $p:M^{m+n}\to M^n$ for the projection onto the last $n$ coordinates, and setting $C\coloneqq \sum_{i=1}^n u_i (0, a_i)$, we obtain:
\[X=B+\sum_{i=1}^n v_i (0,a_i)= \bigcup_{\beta \in B}\left(\{\pi(\beta )\}\times \left(p(\beta )+C\right)\right).\]

\begin{claim}\label{claim:cbigcap}
  We have $c\in \bigcap_{\beta \in B} (p(\beta )+C)$.
\end{claim}
\begin{claimproof}[\it Proof of \Cref{claim:cbigcap}]
Observe that $c\in \bigcup_{\beta \in B} (p(\beta )+C)$. Since $p(B)$ is short, by \Cref{lem-Cncone2} we obtain:
\[\ldim\left(\bigcup_{\beta \in B} (p(\beta )+C) \sm \bigcap_{\beta \in B} (p(\beta )+C)\right)<n.\]
Since the above set is $A$-definable, and $c$ is $\scl$-independent over $A$, we obtain that $c$ does not belong to it. Therefore, $c\in \bigcap_{\beta \in B} (p(\beta )+C).$
\end{claimproof}
To finish the proof of the lemma, observe that, by \Cref{claim:zeroproj,claim:cbigcap},
\[(b ,c)\in \pi(B)\times \bigcap_{\beta \in B} (p(\beta )+C)\sub X.\]
Since  $Y\coloneqq \pi(B)$ and $Z\coloneqq \bigcap_{\beta \in B} (p(\beta )+C)$ are $A$-definable by \Cref{fact-Adefcone}, we are done.
\end{proof}

\begin{prop}\label{prop-bcorth}
Let $A\sub M$, and $b,b'\in M^m, c\in M^n$ be such that $\rk(b/A)=0$, $\tp(b/A)=\tp(b'/A)$, and $\rk(c/A)=n$. Let $X\sub M^{m+n}$ be an $A$-definable set. Then $X$ contains $(b,c)$ if and only if $X$ contains $(b',c)$.
\end{prop}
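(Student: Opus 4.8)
The plan is to deduce \Cref{prop-bcorth} directly from \Cref{lem-bc}. The statement asserts that membership of $(b,c)$ in an arbitrary $A$-definable set $X$ depends only on $\tp(b/A)$, given that $\rk(b/A)=0$ and $\rk(c/A)=n$ (so $c$ is $\scl$-independent over $A$). The key observation is that this is a ``rectangularisation'' phenomenon: although $X$ itself may mix the $b$- and $c$-coordinates in a complicated way, near the point $(b,c)$ it contains an $A$-definable product $Y\times Z$.

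First, suppose $(b,c)\in X$. Apply \Cref{lem-bc} (with $b,c,X$ as given, noting the hypotheses $\rk(b/A)=0$ and $\rk(c/A)=n$ are exactly those required) to obtain $A$-definable sets $Y\subseteq M^m$ and $Z\subseteq M^n$ with $(b,c)\in Y\times Z\subseteq X$. Then $b\in Y$ and $c\in Z$. Since $Y$ is $A$-definable and $\tp(b'/A)=\tp(b/A)$, we get $b'\in Y$, hence $(b',c)\in Y\times Z\subseteq X$. This gives one direction; the converse is symmetric, swapping the roles of $b$ and $b'$ (using $\tp(b/A)=\tp(b'/A)$, which is symmetric, and $\rk(b'/A)=\rk(b/A)=0$). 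This establishes the biconditional.

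I do not expect any serious obstacle here, since all the work has been front-loaded into \Cref{lem-bc}; the proposition is essentially a clean restatement of that lemma in ``type-theoretic'' language. The only point requiring a small remark is that \Cref{lem-bc} is applied in both directions of the biconditional, which is legitimate because its hypotheses are preserved under the symmetry $b\leftrightarrow b'$: indeed $\rk(b'/A)=0$ as well, and ``$X$ contains $(b',c)$'' together with $\rk(c/A)=n$ are precisely the inputs needed. One could phrase the argument slightly more economically by proving only the implication ``$(b,c)\in X\Rightarrow(b',c)\in X$'' and then noting the hypotheses are symmetric in $b,b'$, so the reverse implication follows by exchanging their names.
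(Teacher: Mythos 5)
Your proof is correct and is essentially identical to the paper's: both deduce the statement immediately from \Cref{lem-bc} by rectangularising $X$ near $(b,c)$ into an $A$-definable product $Y\times Z$ and noting $b'\in Y$ since $\tp(b/A)=\tp(b'/A)$. Your extra remark that the converse follows by symmetry (using that $\rk(b'/A)=0$ is inherited from $\tp(b'/A)=\tp(b/A)$) is a point the paper leaves implicit, and it is valid.
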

\begin{proof}
By \Cref{lem-bc}, we may assume that $X$ is of the form $X=Y\times I$. Since $Y$ is an $A$-definable set containing $b$, and $\tp(b/A)=\tp(b'/A)$, we have $b'\in Y$. Hence $(b', c)\in X$.
\end{proof}

\begin{remark}
  If $\rk(c/A)<n$, then the conclusion of \Cref{prop-bcorth} fails. Indeed, let $b,b'\in M $ be distinct positive $A$-infinitesimals, that is, distinct realisations of $p_{0^+}\restr A$. Let $d\in M$ have $\rk(d/A)=1$, e.g.\ $d>\dcl(A)$, and let $c=(d, b+d)$. If $X$ is the graph of the addition map $M^2\to M$, then $(b, c)\in  X$ but $(b',c)\notin X$.
\end{remark}

\begin{defin}
  Let $p(x)\in S(A)$ and $b\models p$. We define $\rk(p)\coloneqq\rk(b/A)$. If $\rk(p)=0$ we call $p$ \emph{short}; if $\rk(p)=\abs x$, we call $p$ \emph{\scl-independent}.
\end{defin}

\begin{cor}\label{co:wort}
Let $p(x), q(y)\in S(A)$. If $p$ is short and $q$ is \scl-independent, then $p\wort q$.
\end{cor}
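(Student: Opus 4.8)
The plan is to establish the equivalent statement that $p(x)\cup q(y)$ implies a complete type over $A$, which I would do by showing that any two realisations of $p(x)\cup q(y)$ have the same type over $A$. Since $p$ constrains only $x$ and $q$ only $y$, a realisation of $p(x)\cup q(y)$ is simply a pair $(b,c)$ with $b\models p$ and $c\models q$. So I fix $b,b'\models p$ and $c,c'\models q$, all realised in $\monster$, and aim at $\tp(bc/A)=\tp(b'c'/A)$.

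First I would align the $\scl$-independent coordinates: since $c\equiv_A c'$, pick $\sigma\in\Aut(\monster/A)$ with $\sigma(c')=c$, so that $\tp(b'c'/A)=\tp(\sigma(b')c/A)$; as $p$ is a type over $A$, the tuple $b'':=\sigma(b')$ again realises $p$. Thus it suffices to show $\tp(bc/A)=\tp(b''c/A)$, i.e.\ that $(b,c)\in X$ iff $(b'',c)\in X$ for every $A$-definable $X\subseteq\monster^{\abs x+\abs y}$. This is precisely \Cref{prop-bcorth}: both $b$ and $b''$ realise the short type $p$, hence $\rk(b/A)=\rk(b''/A)=0$ and $\tp(b/A)=\tp(b''/A)$; and $c$ realises the $\scl$-independent type $q$, hence $\rk(c/A)=\abs y$. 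Therefore $X$ contains $(b,c)$ iff it contains $(b'',c)$, giving $\tp(bc/A)=\tp(b''c/A)$ and so $p\wort q$.

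I do not expect a genuine obstacle here: the real work is already contained in \Cref{prop-bcorth} (and the long-cone machinery behind it), and what remains is the routine reduction above. The only point worth a line of care is that the argument takes place in the monster $\monster$, in which $\scl$ must be recomputed; but $\monster$ is itself a non-linear semi-bounded o-minimal expansion of an ordered group in which $R=(0,1)$ induces a real closed field, so every standing hypothesis of this section — in particular \Cref{prop-bcorth} — applies to $\monster$ verbatim, and $\rk(p)=0$, $\rk(q)=\abs y$ are witnessed by any realisations of $p$, $q$ in $\monster$, since being short or $\scl$-independent is a property of the formula-set, not of the ambient model.
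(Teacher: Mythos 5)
Your proposal is correct and matches the paper, whose entire proof of \Cref{co:wort} is ``Immediate from \Cref{prop-bcorth}''; you have simply spelled out the routine reduction (aligning the $\scl$-independent coordinates by an automorphism over $A$ and then invoking \Cref{prop-bcorth}), together with the correct observation that shortness and $\scl$-independence are properties of the types themselves.
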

\begin{proof}
Immediate from \Cref{prop-bcorth}.
\end{proof}

\section{Domination in o-minimal expansions of ordered groups}\label{sec:mainresults}
We resume studying types over a monster model $\monster$. As in \Cref{sec:domination}, their realisations, as well as sets $B\supseteq U$, are assumed to be in a larger monster model $\monsterbis$ such that $B$ is small with respect to $\monsterbis$.

Until \Cref{thm:main1}, we will work in a non-linear semi-bounded structure, and use the same notation as in \Cref{sec:semibdd}, except that we replace $\mathcal M$ by $\monsterbis$. In particular, $\scl(A)$ will coincide with $\dcl(A R(\monsterbis))$.

We begin with a version of \Cref{co:wort} for orthogonality of invariant types.
\begin{lemma}\label{lemma:sclrkinvext}
  For every $p\in \invtypes(U)$ and $B\supseteq U$ we have $\rk(p)=\rk(p\invext B)$.
\end{lemma}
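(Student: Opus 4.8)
The statement is $\rk(p)=\rk(p\invext B)$ for an invariant type $p\in\invtypes(U)$ and $B\supseteq U$. Recall that $\rk$ of a type means the $\scl$-rank of a realisation over the relevant base, and that in this section $\scl(\cdot)$ is computed in $\monsterbis$, coinciding with $\dcl(\cdot\, R(\monsterbis))$. The plan is to unwind both sides using the characterisation of $\scl$ given by \Cref{fact-scl} together with the Structure Theorem (\Cref{strtheorem}) and \Cref{factcY}/\Cref{lem-bc}, and to use invariance of $p$ to transport the relevant definable witnesses between $U$ and $B$.

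\textbf{Inequality $\rk(p\invext B)\le \rk(p)$.} Let $a\models p$ and suppose $\rk(p)=\rk(a/U)=k$. By \Cref{factcY} there is a $U$-definable set $Y$ with $a\in Y$ and $\ldim(Y)=k$; since $\ldim$ is invariant under the choice of monster model and $Y$ is still $B$-definable, and since $a\models p\invext B$ lies in $Y$ (because $Y$ is over $U\subseteq B$ and $a$ realises an extension of $p\restr U$... more precisely, take $a'\models p\invext B$ and note $a'\in Y(\monsterbis)$), we get $\rk(a'/B)\le \ldim(Y)=k$. Hence $\rk(p\invext B)\le k=\rk(p)$. I should be a little careful that $p\invext B$ concentrates on $Y$: this follows because $Y$ is $U$-definable, $p$ concentrates on $Y$, and $p\invext B$ restricted to $\mathcal L(U)$ equals $p$.

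\textbf{Inequality $\rk(p\invext B)\ge \rk(p)$.} This is the direction where invariance is essential and is the main obstacle. Suppose $\rk(p\invext B)=\ell<k=\rk(p)$. Fix $a\models p\invext B$ (so also $a\models p$). Then there is a $B$-definable set $X$ with $a\in X$ and $\ldim(X)=\ell$. Write $X=\psi(\monsterbis,e)$ with $e\in B^{\abs w}$ and $\psi$ over $\es$. Since $p$ is $A$-invariant for some small $A\subseteq U$, and $p\invext B$ is defined by $\psi(x,e)\in p\invext B$ iff $\psi(x,d)\in p$ for some/any $d\in U$ with $d\equiv_A e$, pick such a $d$; then $\psi(x,d)\in p$, i.e.\ $p$ concentrates on the $U$-definable set $X'\coloneqq\psi(\monsterbis,d)$. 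Now $X'\equiv_A X$ via an automorphism of $\monsterbis$ fixing $A$ sending $e\mapsto d$ (using that $\monsterbis$ is homogeneous and $d\equiv_A e$); such an automorphism preserves $\ldim$ (it is a bijection, apply \Cref{fact1}(\ref{point:fact12}), or simply note $\ldim$ of a parametrised set depends only on the formula), so $\ldim(X')=\ell$. But then any realisation of $p$, in particular $a$, has $\rk(a/U)\le\ldim(X')=\ell<k$, contradicting $\rk(p)=k$. Hence $\rk(p\invext B)\ge\rk(p)$, and combined with the previous paragraph we conclude $\rk(p\invext B)=\rk(p)$.

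\textbf{Remark on the obstacle.} The only subtle point is that $\ldim$ is defined via a sup over all small parameter sets over which a set is definable, so I must confirm that $\ldim$ is genuinely a property of the formula-with-parameters and is preserved under an automorphism of $\monsterbis$; this is immediate from \Cref{fact1}(\ref{point:fact12}) applied to the bijection given by the automorphism, or more directly from the observation preceding \Cref{fact1} that $\ldim$ is well-defined by \cite[Lemma~5.7]{el-sbdgroups}. Everything else is a routine transfer between $U$ and $B$ using $A$-invariance of $p$ and the definition of $p\invext B$ in \Cref{defin:invtensorth}(\ref{point:definvext}).
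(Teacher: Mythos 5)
Your proof is correct, and its engine is the same as the paper's: the definition of $p\invext B$ (\Cref{defin:invtensorth}(\ref{point:definvext})) lets you replace the $B$-parameters of a witnessing definable object by $A$-conjugate parameters from $U$, and the relevant rank notion is invariant under this replacement. The packaging differs, though. The paper argues coordinate-by-coordinate: if $a_0\in\scl(Ba_1\ldots a_m)$, it takes a short interval over $Ba_1\ldots a_m$ containing $a_0$ (via \Cref{fact-scl}), moves its extra parameter into $U$, and uses that shortness of a definable interval is a property of the type of its parameters. You instead work globally with a $B$-definable set of minimal $\ldim$ supplied by \Cref{factcY} and invoke type-invariance of $\ldim$. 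Both routes are sound; yours trades the paper's elementary ``shortness is type-invariant'' observation for the slightly heavier fact that $\ldim(\psi(\monsterbis,e))$ depends only on $\tp(e)$. Two small points there: \Cref{fact1}(\ref{point:fact12}) concerns definable bijections and does not literally apply to an automorphism of $\monsterbis$, so you should instead note that $\scl$, hence $\rk$, hence $\ldim$, is $\Aut(\monsterbis)$-invariant directly from the definitions (or via \cite[Corollary~5.10]{el-sbdgroups}); and ``$\ldim$ of a parametrised set depends only on the formula'' is false as stated --- it depends on the type of the parameter, which is exactly what you use since $d\equiv_A e$. Neither issue affects correctness.
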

\begin{proof}
  It is immediate that $\rk(p\invext B)\le \rk(p)$. For the opposite inequality, fix $a=(\bla a0,m)\models p\invext B$. It suffices to show that if $a_0\in \scl(B \bla a1,m)$ then for every $c=(\bla c0,m)\models p$ we have that $c_0\in \scl(U \bla c1,m)$.  If the former holds, then there is a $B\bla a1,m$-definable short interval containing $a_0$, say defined by $\phi(x_0, \bla a1,m, b)$. By \Cref{defin:invtensorth}(\ref{point:definvext}), if $p$ is $A$-invariant there is  $\tilde b\in U$ with $\tilde b\equiv_A b$ such that $p(x)\proves \phi(x_0, \bla x1,m, \tilde b)$. Whether the interval $\phi(x_0, \bla c1,m, \tilde b)$ is short is a property of $\tp(\bla c1,m,\tilde b)$,  and by \Cref{defin:invtensorth}(\ref{point:definvext})
  \[
    \bla a1,m, b\equiv \bla a1,m,\tilde b\equiv \bla c1,m,\tilde b.
  \]
  Therefore $c_0$ belongs to the short interval defined by $\phi(x_0, \bla c1,m, \tilde b)$.
\end{proof}
\begin{prop}\label{prop-sbd}
  Let $p(x), q(y)\in \invtypes(U)$. If $p$ is short and $q$ is \scl-independent, then $p\perp q$.
\end{prop}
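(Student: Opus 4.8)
The plan is to derive this directly from \Cref{co:wort} via \Cref{lemma:sclrkinvext}. By \Cref{defin:invtensorth}(\ref{point:definvext}) and the definition of $\perp$, proving $p\perp q$ amounts to checking $(p\invext B)\wort(q\invext B)$ for every $B\supseteq U$; for such a $B$ — which is small with respect to $\monsterbis$ by the standing conventions of \Cref{sec:mainresults} — both $p\invext B$ and $q\invext B$ are complete types over $B$. So I would fix such a $B$ and regard $\monsterbis$ as the ambient semi-bounded non-linear o-minimal structure, with $\scl(A)=\dcl(AR(\monsterbis))$ for $A\subseteq V$. Then all the material of \Cref{sec:semibdd} — in particular \Cref{co:wort} — applies inside $\monsterbis$, with the parameter set taken to be $A\coloneqq B$.

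The key point is that shortness and $\scl$-independence are preserved under the operation $\cdot\invext B$, which is exactly the content of \Cref{lemma:sclrkinvext}. Concretely, since $p$ is short, $\rk(p\invext B)=\rk(p)=0$, so $p\invext B$ is short; and since $q$ is $\scl$-independent, $\rk(q\invext B)=\rk(q)=\abs y$, so $q\invext B$ is $\scl$-independent. Applying \Cref{co:wort} to $p\invext B,q\invext B\in S(B)$ in $\monsterbis$ then gives $(p\invext B)\wort(q\invext B)$, and since $B$ was arbitrary, $p\perp q$.

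I do not expect a genuine obstacle here: the real work — the geometry of long cones culminating in \Cref{prop-bcorth}/\Cref{co:wort}, and the rank computation of \Cref{lemma:sclrkinvext} — has already been done, and this proposition is their immediate combination, the invariant-type counterpart of \Cref{co:wort}. The only thing requiring a moment's care is the two-monster bookkeeping: one must make sure that invoking \Cref{co:wort} over $B$ inside the larger monster $\monsterbis$ is legitimate, which it is precisely because $B$ is small relative to $\monsterbis$, so that $\monsterbis$ is sufficiently saturated and homogeneous over $B$.
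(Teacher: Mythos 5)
Your proposal is correct and is exactly the paper's proof, which simply cites \Cref{co:wort} and \Cref{lemma:sclrkinvext}; you have merely spelled out the two-monster bookkeeping that the paper leaves implicit (and which its conventions at the start of \Cref{sec:mainresults} already arrange). No issues.
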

\begin{proof}
By \Cref{co:wort} and \Cref{lemma:sclrkinvext}.
\end{proof}

Recall that by \Cref{fact:Rstabemb} $R$ is stably embedded. 

\begin{lemma}\label{lem-splitRsclind}
 For any tuple $a\in V$ and any \scl-basis of $a$ over $U$ there is a tuple $b\sub R$ such that $\dcl(aU)=\dcl(bcU)$.
\end{lemma}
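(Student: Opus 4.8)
The plan is to read off the result from the identification $\scl(Uc)=\dcl(UcR)$ recorded at the start of \Cref{sec:mainresults}, together with definable choice in $\monsterbis$. Let $c$ be the given $\scl$-basis; since an $\scl$-basis of $a$ over $U$ is a subtuple of $a$ (after permuting coordinates), we have $c\in\dcl(aU)$ and $\dcl(acU)=\dcl(aU)$. Because $c$ is an $\scl$-basis, $a\in\scl(Uc)=\dcl(UcR)$, so there is a finite tuple $r$ from $R$ and a $Uc$-definable function $F$ with $a=F(r)$.

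The one step with any content is to replace $r$ by a tuple defined over $aU$. To do this I would consider the set $S\coloneqq\set{z\in R^{\abs r}:F(z)=a}$, which is $acU$-definable and nonempty, as it contains $r$. Since $\monsterbis$ is an o-minimal expansion of an ordered group it has definable choice, so there is $b\in S$ with $b\in\dcl(acU)=\dcl(aU)$. By construction $b$ is a tuple from $R$, we have $b\in\dcl(aU)$, and $a=F(b)\in\dcl(bcU)$.

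It then remains to assemble the two inclusions. Since $b$ and $c$ are tuples from $\dcl(aU)$, we get $\dcl(bcU)\subseteq\dcl(aU)$; and since $a\in\dcl(bcU)$, we get $\dcl(aU)\subseteq\dcl(bcU)$. Hence $\dcl(aU)=\dcl(bcU)$, with $b$ a tuple from $R$, as required. The only place where something could go wrong is the passage from the arbitrary witness $r\in R^{\abs r}$ to one lying in $\dcl(aU)$: without it one would only obtain $\dcl(aU)\subseteq\dcl(rcU)$ for some external tuple $r$ from $R$, not the reverse inclusion $\dcl(bcU)\subseteq\dcl(aU)$; definable choice is exactly what makes this work, and also what forces $c$ to be taken as a subtuple of $a$ rather than an arbitrary $\scl$-independent tuple with $a\in\scl(Uc)$.
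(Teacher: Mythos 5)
Your proof is correct, but it takes a genuinely different route from the paper's. Both arguments start the same way: from $a\in\scl(Uc)=\dcl(UcR)$ one gets a $Uc$-definable $F$ and a tuple $r$ from $R$ with $a=F(r)$, and both then work with the fibre $F^{-1}(a)\cap R^{\abs r}$, which is $aU$-definable and contained in a cartesian power of $R$. The paper extracts $b$ as a \emph{canonical parameter} of this fibre in the sense of $\mathcal R(\monsterbis)$ (using stable embeddedness of $R$, \Cref{fact:Rstabemb}, plus elimination of imaginaries in the o-minimal field $\mathcal R$) and then runs an automorphism argument over $Uc$ to show $a$ and $b$ are interdefinable. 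You instead invoke definable choice to pick a point $b$ of the fibre lying in $\dcl(aU)$, which yields both inclusions directly. Your version is more elementary: it needs only definable Skolem functions (available here because a positive constant $1$ is named, exactly as used in \Cref{fact-kobi} and \Cref{fact-Adefcone}), and avoids canonical parameters and automorphisms altogether. Two small remarks. First, you rightly flag that the argument needs $c\in\dcl(aU)$; the paper's proof needs this too (it says ``recall that $c\subseteq a$''), so this is a shared hypothesis rather than a defect of your approach, though strictly what is needed is only $c\in\dcl(aU)$, not that $c$ be a literal subtuple of $a$ (this matters for the later application in \Cref{lem:rfsbteorcf}, where the basis is obtained by extending elements of $\dcl(Ua)$ that need not be coordinates of $a$). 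Second, pedantically, the fibre should be intersected with $\dom F$, but that changes nothing.
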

\begin{proof}
 Let $c$ be an \scl-basis of $a$ over $U$. Then $a\in \dcl(R(\monsterbis) U c)$, so there are $e\in R(\monsterbis)^n$  and an $U c$-definable map $f:V^n\to V^{\abs a}$, such that $a=f(e)$. The set
\[X=f^{-1}(a)\cap R^n=\{x\in R^n: f(x)=f(e)\}\]
is $U ec$-definable and contained in $R^n$. Hence
 $X$ is definable in $\cal R(\monsterbis )$ by stable embeddedness. Let $b\sub R(\monsterbis)$ be a canonical parameter for it in the sense of $\cal R(\monsterbis)$.   Now let $\sigma$ be an automorphism of $\monsterbis $ that fixes $U c$ pointwise. Then $\sigma\restr R(\monsterbis)$ is an automorphism of $\cal R(\monsterbis)$.
Recall that $c\subseteq a$, and observe that $X$ is also $Ua$-definable. Moreover, $\set{a}=f(X)$, hence we have
\[\sigma(a)=a\iff \sigma (X)=X\iff \sigma (b)=b,\]
showing that $\dcl(bcU )=\dcl(aU )$, as needed.
\end{proof}

As $\mathcal R$ is stably embedded, we may use \Cref{fact-fullemb} and, for ease of notation, conflate types in the sense of $\mathcal R(\monster)$ with types in the sense of $\monster$ concentrating on a cartesian power of $R$.

In order to simplify statements, in what follows we allow the degenerate case $k=-1$, in which case $p\domeq p_R$.
\begin{lemma}\label{lem:rfsbteorcf}
  Let $T$ be a semi-bounded o-minimal theory with a definable interval $R$ such that $\mathcal R$ expands a real closed field. For every  $p\in \invtypes(U)$ there are a definable function $\tau^p=(\rho^p,\lambda^p)$ and pairwise weakly orthogonal invariant types $p_R=\rho^p_*p\in \invtypes(R(\monster))$ and $\bla p0,k\in \invtypes_1(U)$ such that
\begin{enumerate}
\item\label{point:techlem1} for $0\le i\le k$, each $p_i$ is the type of an element of \scl-rank $1$; and
\item\label{point:techlem2} there is a small type containing $y=\tau^p(x)$ that is an extremal witness of $p\domeq p_R\otimes \bla p0\otimes k=\rho^p_*p\otimes \lambda^p_*p$.
  \newcounter{5.4counter}
  \setcounter{5.4counter}{\value{enumi}}
\end{enumerate}
Furthermore, if $a\models p$,
\begin{enumerate}
  \setcounter{enumi}{\value{5.4counter}}
\item\label{point:techlem3} as $\bla p0,k$ we may take any tuple of pairwise weakly orthogonal $1$-types of $\scl$-rank $1$ that is maximal amongst those realised in $\dcl(Ua)$;
\item\label{point:techlem4} the product of any such is domination-equivalent to the type $q$ of some $\scl$-basis of $a$ over $U$; and 
\item\label{point:techlem5} as $p_R$ we may take any element of $\invtypes(R(\monster))$ such that  $p_R\otimes q$ is in definable bijection with $p$; in particular, the dimension of $p_R$ equals $\dim(R(\dcl(Ua))/U)$.
\end{enumerate}
\end{lemma}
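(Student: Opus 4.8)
The plan is to decompose a realisation of $p$, up to interdefinability over a small set, into a short part valued in $R$ and a ``tall'' part given by an $\scl$-basis, to reduce the latter to a product of $1$-types via the linear machinery of \Cref{lineardec}, and to glue the pieces back together using the weak orthogonality of short and $\scl$-independent types from \Cref{co:wort}. Concretely, I would fix $a\models p$ with $p$ being $A$-invariant for some small $A$, choose an $\scl$-basis $c$ of $a$ over $U$ and, by \Cref{lem-splitRsclind}, a tuple $b$ from $R(\monsterbis)$ with $\dcl(Ua)=\dcl(Ubc)$, enlarging $b$ so that moreover $\dim(b/U)=\dim(R(\dcl(Ua))/U)$. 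Enlarging $A$ harmlessly, fix $A$-definable functions with $b=\rho^p(a)$ and $c=\gamma^p(a)$ together with an $A$-definable function recovering $a$ from $(b,c)$; then $p_R\coloneqq\rho^p_*p=\tp(b/U)$ and $q\coloneqq\gamma^p_*p=\tp(c/U)$ are invariant, $q$ is $\scl$-independent as $\rk(q)=\abs{c}$, $p_R$ is short as $b\subseteq R$, and by \Cref{fact:Rstabemb,fact-fullemb} we may regard $p_R\in\invtypes(\mathcal R(\monster))$. By \Cref{co:wort}, $p_R\wort q$, so (\Cref{fact-wortdomeq}) $p_R(x)\cup q(y)$ has a unique completion $p_R\otimes q=q\otimes p_R$, which must be $\tp(bc/U)$; since $a$ and $(b,c)$ are interdefinable over $A$, the type $p$ is in $A$-definable bijection with $p_R\otimes q$, so $p\domeq p_R\otimes q$, witnessed by a small type containing $y=(\rho^p(x),\gamma^p(x))$. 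That witness is extremal, since a point threatening condition~(2) of extremality would, by the shape of the Morley product and additivity of dimension, be of positive dimension over $U(b,c)$ while lying in $\dcl(U(b,c))=\dcl(Ua)$.

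Next I would decompose the $\scl$-independent type $q$. Applying \Cref{fact:1dimim} to $q$ yields a maximal product $q'=\bla p0\otimes k$ of pairwise weakly orthogonal, non-realised, invariant $1$-types realised in $\dcl(Uc)$ and an extremal witness $r'$ of $q\doms q'$ with $q'\cup r'\proves\pi_{q,N}$ for a suitably saturated $N\prec\monster$. None of the $p_i$ is short: a short non-realised $1$-type realised in $\dcl(Uc)$ would be weakly orthogonal to $q$ by \Cref{co:wort} while being dominated by it, contradicting \Cref{fact-wortdomeq}(\ref{point:wortdown}); this is Point~\ref{point:techlem1}. As $q$ is $\scl$-independent, \Cref{lem-scllin} gives $q\restr\mathcal L_{\mathrm{lin}}\proves q$, and since $\pi_{q\restr\mathcal L_{\mathrm{lin}},N}\subseteq\pi_{q,N}$ while the argument in the proof of \Cref{lineardec}, carried out in the linear reduct $\monsterbis_{\mathrm{lin}}$, shows $\pi_{q\restr\mathcal L_{\mathrm{lin}},N}\proves q\restr\mathcal L_{\mathrm{lin}}$, we obtain $q'\doms q$, hence $q\domeq q'$ with the extremal witness $r'$. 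Taking $\lambda^p$ to be the $A$-definable function sending $a$ to the realisation of $q'$ inside $\dcl(Uc)$ and $\tau^p=(\rho^p,\lambda^p)$, we then have $\rho^p_*p=p_R$, $\lambda^p_*p=q'$ and $p\domeq p_R\otimes q\domeq p_R\otimes q'$ by \Cref{fact-wortdomeq}(\ref{point:fwd6}), witnessed by a small type containing $y=\tau^p(x)$ built by composing the two witnesses above; this gives Point~\ref{point:techlem2}, up to verifying that the composite witness is extremal.

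For Points~\ref{point:techlem3}--\ref{point:techlem5}, the key observation is that every $\scl$-rank-$1$ type realised in $\dcl(Ua)$ is already realised, up to domination-equivalence, in $\dcl(Uc)$: if $e\in\dcl(Ua)$ has $\rk(e/U)=1$, then $e\in\scl(Uc)$, so by \Cref{fact-scl} there is a tall $e'\in\dcl(Uc)$ with $e-e'$ short, whence $\rk(ee'/U)=1$; a weak orthogonality between $\tp(e/U)$ and $\tp(e'/U)$ would force $\rk(ee'/U)=2$ by \Cref{lemma:sclrkinvext}, so \Cref{fct:wort1tp} gives $\tp(e/U)\domeq\tp(e'/U)$. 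Hence any two maximal tuples of pairwise weakly orthogonal $\scl$-rank-$1$ types realised in $\dcl(Ua)$ have, by \Cref{fct:wort1tp,fact:idemp,lemma:1tp1at}, domination-equivalent products, equal up to $\domeq$ to that of the tuple realised in $\dcl(Uc)$, namely $q'\domeq q$; this proves Points~\ref{point:techlem3} and~\ref{point:techlem4}. Finally, for Point~\ref{point:techlem5}, any $p_R^{\ast}\in\invtypes(\mathcal R(\monster))$ with $p_R^{\ast}\otimes q$ in definable bijection with $p$ is short, hence weakly orthogonal to the $\scl$-independent $q$; composing with $p\domeq p_R\otimes q$ and applying \Cref{fct:wort1tp} forces $p_R^{\ast}$ to be in definable bijection with $p_R$, so its dimension equals $\dim(b/U)=\dim(R(\dcl(Ua))/U)$ by the choice of $b$.

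The step I expect to be the main obstacle is checking that the composite witness in Point~\ref{point:techlem2} is extremal, i.e.\ that a realisation $(b,\bla d0,k)$ of $p_R\otimes\bla p0\otimes k$ assembled this way is extremal with respect to $a$ and $U$. The plan for this is to exploit that the short part of $\dcl(Ua)$ has been absorbed entirely into $p_R$ --- where the maximality $\dim(b/U)=\dim(R(\dcl(Ua))/U)$ enters --- and that the tall part has been handled by the extremal witness $r'$: a point violating extremality should split, via an $\scl$-basis and \Cref{lem-splitRsclind}, into a short summand already realised in $\dcl(Ub)$ and a tall summand already ruled out by extremality of $r'$ together with \Cref{fct:wort1tp} and \Cref{fact:idemp}, after which a Morley-independence and dimension count with \Cref{lemma:sclrkinvext} and additivity of $\scl$-rank finishes the contradiction.
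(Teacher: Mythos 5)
Your proposal is correct and follows essentially the same route as the paper: split $a$ via \Cref{lem-splitRsclind} into a tuple in $R$ and an $\scl$-basis, use \Cref{co:wort} for the weak orthogonality, reduce the $\scl$-independent factor to $1$-types through the linear reduct via \Cref{lem-scllin} and \Cref{lineardec}, and identify the resulting $1$-types with a maximal weakly orthogonal family of $\scl$-rank-$1$ types. The extremality issue you flag at the end is handled in the paper exactly along the lines you sketch, by reducing to extremality of the witness of $q\domeq \bla p0\otimes k$ (which \Cref{lineardec} supplies) and then invoking \Cref{lemma:1tp1at}(\ref{lemma:extrcompr}) to adjust witnesses while preserving extremality, so no new idea is needed there.
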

\begin{proof}
  
  Let $a\models p$. Observe immediately that, for any choice of $\bla p0,k$ as in point~\ref{point:techlem3}, and any $\bla c0,k\models p_0(x_0)\cup\ldots\cup p_k(x_k)$, we may extend $\bla c0,k$ to an \scl-basis $c$ of $a$ over $U$. Let $b\subseteq R(\dcl (Ua))$ be such that $a$ and $bc$ are in definable bijection, say via definable functions $\rho^p$ and $\lambda_0^p$ mapping $a$ to $b$ and $c$ respectively; at least one such $b$ exists by \Cref{lem-splitRsclind}. Note immediately that we are not fixing any particular such $b$, $c$,  so as to take care of points~\ref{point:techlem5} and~\ref{point:techlem3} (hence also of point~\ref{point:techlem1}).

Set $p_R\coloneqq \rho^p_*p=\tp(b/U)$ and $q\coloneqq(\lambda_0^p)_*p=\tp(c/U)$. We have  $\tp(a/U)\domeq \tp(bc/U)$, and the types $p_R$ and $q$ are invariant by~\cite[Lemma~1.8]{invbartheory}, and weakly orthogonal by \Cref{co:wort}. By \Cref{fact-wortdomeq,fact-fullemb}, it suffices to find a definable function $\lambda^p$ and a small type $r''$ containing $w=\lambda^p(z)$ and witnessing extremally $q(z)\domeq (\bla p0\otimes k)(w)$. From this, we obtain point~\ref{point:techlem2} by setting $\tau^p\coloneqq(\rho^p,\lambda^p)$, and simultaneously prove point~\ref{point:techlem4}. We will obtain the required $\lambda^p$ by composing $\lambda_0^p$ with suitable definable functions.

Let $q'(z)$ be the restriction of $q$ to the linear language. By \Cref{lem-scllin} $q'\proves q$. 
In the linear reduct of $T$, apply \Cref{lineardec} to $q$ and some  $q_0'\otimes\ldots\otimes q_n'$ satisfying its assumptions and such that, for $i\le k$, the type $p_i$ is the unique (by o-minimality) completion in $T$ of  $q_{n-k+i}'$. Hence, in the linear reduct, we have a definable function $\tau$, a small type $r'$ implying $w=\tau(z)$, and pairwise orthogonal invariant $1$-types $q_i'$ such that $r$ witnesses extremally  $q'(z)\domeq (q_0'\otimes\ldots\otimes q_n')(w)$. 

Let $\lambda^p_1\coloneqq\tau\circ\lambda_0^p$ and, for $-n+k\le i\le -1$, define $p_i$ to be the unique completion of $q_{n-k+i}'$. By construction, if $r'$ witnesses $q_0'\otimes\ldots\otimes q_n'\doms q'$, we have  $\bla p{-n+k}\otimes k\cup r'\proves q_0'\otimes\ldots\otimes q_n'\cup r'\proves q'\proves q$. 

\begin{claim*}
  For every $i<0$ there is $j\ge 0$ such that $p_i\domeq p_j$.
\end{claim*}
\begin{claimproof}
Clearly $\tau$, being defined in the linear reduct of $T$, is a fortiori definable in $T$. Now, $w=\tau(z)$, and $q(z)$ says that $z$ is \scl-independent over $U$. Since no coordinate $w_i$ of $w$ is realised, it follows from \Cref{co:wort} and \Cref{fact-wortdomeq}(\ref{point:fwd3}) that no $p_i$ is short, hence all of them must have $\scl$-rank $1$. The conclusion follows from maximality of $\bla p0,k$ and \Cref{fct:wort1tp}.
\end{claimproof}

 Compose  $\lambda_1^p$ with the projection on the last $k+1$ coordinates, obtaining a function $\lambda^p$. By the Claim, we may apply \Cref{lemma:1tp1at}(\ref{lemma:extrcompr}), and correspondingly modify $r'$ to obtain the required $r''$, hence concluding the proof.
\end{proof}
\begin{defin}
We define $\longpart(\monster)$ to be the quotient of $\set{p\in\invtypes_1(U): \rk(p)=1}$ by domination-equivalence.
\end{defin}
\begin{remark}\label{rem:onlylongdeftp}
  The only definable $1$-type $p$ with $\rk(p)=1$ is the type at $+\infty$.
\end{remark}

\begin{remark}
The monoid $(\mathscr P_{<\omega}(\operatorname{Long}(\monster)), \cup, \supseteq)$ may be  naturally viewed as a quotient of the domination monoid $\invtildeof{\monster_\mathrm{lin}}$ of the linear reduct. Namely, we identify two generators $\class p$, $\class q$,  where $p,q\in \invtypes_1(\monster_\mathrm{lin})$, whenever
\begin{enumerate}
\item $p$ and $q$ are in definable bijection in $\monster$, or
\item $p$ and $q$ are both short.
\end{enumerate}
 It is easy to see that this is well-defined and extends to a surjective homomorphism of partially ordered monoids.
\end{remark}
\begin{thm}\label{thm:main1}
    Let $T$ be a semi-bounded o-minimal theory with a definable interval $R$ such that its induced structure $\mathcal R$ expands a real closed field.
There is an isomorphism of posets
  \[
    \invtilde\cong \invtildeof{\mathcal R(\monster)}\times (\mathscr P_{<\omega}(\operatorname{Long}(\monster)), \supseteq).
  \]
If $\otimes$ is compatible with $\doms$ in $\mathcal R(\monster)$, then the same holds in $\monster$, and if $\mathscr P_{<\omega}(\longpart(\monster))$ is equipped with the binary operation $\cup$, then the above is an isomorphism of monoids.
\end{thm}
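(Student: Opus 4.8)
The core idea is to use \Cref{lem:rfsbteorcf} to build the isomorphism coordinate by coordinate, and then to check it is a poset map, bijective, and monoidal. For each $p\in\invtypes(U)$, \Cref{lem:rfsbteorcf} associates to $p$ a pair $(p_R, \set{\class{p_0},\ldots,\class{p_k}})$ where $p_R\in\invtypes(\mathcal R(\monster))$ and the $p_i$ are pairwise weakly orthogonal $1$-types of $\scl$-rank $1$, with $p\domeq p_R\otimes p_0\otimes\cdots\otimes p_k$ witnessed extremally. I would define the candidate map
\[
\Phi\colon\invtilde\to\invtildeof{\mathcal R(\monster)}\times(\mathscr P_{<\omega}(\longpart(\monster)),\supseteq),\qquad \class p\mapsto\bigl(\class{p_R},\ \set{\class{p_0},\ldots,\class{p_k}}\bigr),
\]
and first verify it is well-defined: by \Cref{lem:rfsbteorcf}(\ref{point:techlem3})–(\ref{point:techlem5}), the $\scl$-basis type $q=p_0\otimes\cdots\otimes p_k$ is determined up to $\domeq$ by $p$ (it is the type of any $\scl$-basis of $a$ over $U$), the multiset $\set{\class{p_i}}$ collapses to a set because $\domeq$-equivalent $1$-types may be merged via \Cref{fact:idemp}, and $p_R$ is determined up to $\domeq$ in $\mathcal R(\monster)$ since $\dim p_R=\dim(R(\dcl(Ua))/U)$ and, given $q$, the type $p$ is recovered as $p_R\otimes q$ up to definable bijection. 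Using \Cref{fact-fullemb}, $\domeq$-classes in $\mathcal R(\monster)$ embed faithfully into $\invtilde$, so $\class{p_R}$ is well-defined.

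**Poset structure and bijectivity.** Next I would show $\Phi$ is an order-isomorphism. Surjectivity is easy: given $(\class r, \set{\class{q_1},\ldots,\class{q_m}})$ with $r\in\invtypes(\mathcal R(\monster))$ and $q_j\in\longpart(\monster)$, the type $r\otimes q_1\otimes\cdots\otimes q_m$ maps to it (after merging any $\domeq$-equivalent $q_j$, again by \Cref{fact:idemp}). For injectivity and order-reflection I would argue: $\class p\doms\class{p'}$ iff $\Phi(\class p)\geq\Phi(\class{p'})$ in the product order (product of $\doms$ on the first factor and $\supseteq$ on the second). The ``only if'' direction: suppose $p\doms p'$. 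Since $p\domeq p_R\otimes q$ and $p'\domeq p'_R\otimes q'$ with $q,q'$ products of $\scl$-rank-$1$ $1$-types and $p_R,p'_R$ short (concentrating on powers of $R$), domination transfers: by \Cref{fact:1dimim} and \Cref{remark:1dir}, every $\scl$-rank-$1$ $1$-type realised in $\dcl(Ua')$ is, up to definable bijection, realised in $\dcl(Ua)$, which gives $\set{\class{p'_i}}\subseteq\set{\class{p_i}}$; and pushing forward a witness of $p\doms p'$ along $\rho^p,\rho^{p'}$ together with \Cref{prop-sbd} (orthogonality of short and $\scl$-independent invariant types) gives $p_R\doms p'_R$ in $\monster$, hence in $\mathcal R(\monster)$ by full embeddedness. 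The ``if'' direction: if $p_R\doms p'_R$ and $\set{\class{p'_i}}\subseteq\set{\class{p_i}}$, then using \Cref{lemma:1tp1at} and \Cref{fact:idemp} one assembles a witness of $p_R\otimes q\doms p'_R\otimes q'$ from a witness of $p_R\doms p'_R$ and the extremal witnesses of idempotency; since $q\doms q'$ (as the generators of $q'$ form a sub-multiset of those of $q$ up to $\domeq$) and the two factors are orthogonal on both sides, \Cref{fact-wortdomeq}(\ref{point:fwd6}) and (\ref{point:fwd1})–(\ref{point:fwd2}) combine them. This also yields injectivity.

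**The monoid part.** For the last claim, assume $\otimes$ respects $\doms$ in $\mathcal R(\monster)$. I would first show $\otimes$ respects $\doms$ in $\monster$: given $p_0\doms p_1$, write each $p_j\domeq (p_j)_R\otimes q_j$ as above; by \Cref{lemma:sclrkinvext} and \Cref{prop-sbd} the short part and the $\scl$-independent part of any invariant type remain orthogonal after any invariant extension, so for an arbitrary invariant $s$ with decomposition $s_R\otimes q_s$, the product $s\otimes p_j$ decomposes (up to $\domeq$, using commutativity of weakly orthogonal factors, \Cref{fact-wortdomeq}(\ref{point:fwd5}), and distality via \Cref{fact:wortotimes}) as $(s_R\otimes (p_j)_R)\otimes(q_s\otimes q_j)$, with the two halves still orthogonal. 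Domination of the $R$-parts is preserved by hypothesis on $\mathcal R(\monster)$, and domination of the long parts corresponds to inclusion of finite multisets of $\scl$-rank-$1$ classes, which is monotone under union; combining via \Cref{fact-wortdomeq}(\ref{point:fwd6}) gives $s\otimes p_0\doms s\otimes p_1$. Finally, $\Phi$ is a monoid homomorphism: the decomposition of $p\otimes p'$ is $(p_R\otimes p'_R)\otimes(q\otimes q')$, so the first coordinate multiplies (matching $\otimes$ on $\invtildeof{\mathcal R(\monster)}$) and the second takes the union of the generator sets (matching $\cup$), again after merging $\domeq$-equivalent generators.

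**Main obstacle.** The delicate point is the ``only if'' direction of order-reflection — showing that $p\doms p'$ forces $p_R\doms p'_R$ in $\mathcal R(\monster)$. One must push a witness $r(x,y)$ of $p\doms p'$ through the pushforward maps $\rho^p,\rho^{p'}$ landing in powers of $R$ and argue the resulting type witnesses domination \emph{of the $R$-parts alone}, which requires knowing the long coordinates contribute nothing; this is exactly where \Cref{prop-sbd} (equivalently \Cref{co:wort} lifted to invariant types) and the extremality of the witnesses in \Cref{lem:rfsbteorcf} do the work, decoupling the short and long data. Getting the bookkeeping of extremal witnesses right — so that the two decompositions can be aligned simultaneously — is the part that needs care; everything else is an assembly of the cited facts.
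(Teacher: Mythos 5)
Your proposal is correct and follows essentially the same route as the paper: the decomposition of \Cref{lem:rfsbteorcf} defines the map, \Cref{prop-sbd} (orthogonality of short and $\scl$-independent invariant types) together with \Cref{fct:wort1tp} gives well-definedness and order-reflection, and the monoid transfer is obtained by decomposing products, rearranging via orthogonality, using compatibility in $\mathcal R(\monster)$ for the $R$-parts and \Cref{lemma:1tp1at} for the long parts. The paper handles the duplicate-generator issue you flag via an explicit projection justified by extremality, but this matches your ``merging via \Cref{fact:idemp}'' step.
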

\begin{proof}
  \Cref{lem:rfsbteorcf}, together with the definition of $\longpart(\monster)$, \Cref{fct:wort1tp} and \Cref{prop-sbd}, gives us the required isomorphism of posets. Moreover, if $\invtildeof{\mathcal R(\monster)}$ and $\invtilde$ are monoids, then the above is also easily seen to be an isomorphism of monoids.

In order to show transfer of compatibility of $\otimes$ with $\doms$
we need to show that, assuming that it holds in $\mathcal R(\monster)$, if $p,q_0,q_1\in \invtypes(U)$ and $q_0\doms q_1$, then $p\otimes q_0\doms p\otimes q_1$. Apply to $p, q_0, q_1$ \Cref{lem:rfsbteorcf}, obtaining $\tau^p=(\rho^p, \lambda^p)$, and analogously for $q_0$ and $q_1$.

Let $s\in \invtypes_1(U)$. By \Cref{fact:wortotimes}, if $p\otimes q_i\doms s$ then $p\doms s$ or $q_i\doms s$. Again by \Cref{fact:wortotimes}, and by \Cref{co:wort}, if $\rk(s)=1$ then we must have $\lambda^p_*p\doms s$ or $\lambda^{q_i}_*q_i\doms s$.

For $i<2$ consider the definable function $\tau^i\coloneqq (\rho^p, \rho^{q_i}, \lambda^p, \lambda^{q_i})$. By the above paragraph and the dimension considerations in \Cref{lem:rfsbteorcf}(\ref{point:techlem5}), we see that $\tau^i$ satisfies the conclusion of  \Cref{lem:rfsbteorcf} for $p\otimes q_i$, with the possible exception of the weak orthogonality requirement; namely, the $1$-types constituting the factors of $(\lambda^p, \lambda^{q_i})_*p\otimes q_i$ contain a maximal tuple as in \Cref{lem:rfsbteorcf}(\ref{point:techlem3}), but this containment may be strict, as it may happen that some $1$-type of $\scl$-rank $1$ is dominated by both $\lambda^p_*p$ and $\lambda^{q_i}_*q_i$. Nevertheless, because $q_i\domeq\tau^{q_i}_*q_i$ is witnessed extremally, it is easy to see that there is a definable function $\pi$ consisting in a projection on a suitable set of coordinates followed by a permutation of the remaining ones such that $\pi\circ\tau^i$ satisfies the conclusion of \Cref{lem:rfsbteorcf} for $p\otimes q_i$. Applying \Cref{fact:wortotimes,,fact:wortsimon,,fact:idemp} yields that $\tau^i_*(p\otimes q_i)\domeq(\pi\circ\tau^i)_*(p\otimes q_i)$, and it follows that  $\tau^p_* p\otimes \tau^{q_i}_* q_i\domeq p\otimes q_i$.  By arguing as in the proof of \cite[Proposition~1.3]{hilsDominationMonoidHenselian2024}, it now suffices to show that if $q_0\doms q_1$ then $\tau^p_* p\otimes \tau^{q_0}_* q_0\doms \tau^p_* p\otimes \tau^{q_1}_* q_1$.

Recall that by \Cref{prop-sbd} and \Cref{fact-wortdomeq}(\ref{point:fwd7}) we have $\rho^p_*p\perp \lambda^p_*p$;  hence,

\begin{equation}
  \label{eq:taurholambda}\tag{$*$}
  \tau^p_* p\domeq \rho^p_* p\otimes\lambda^p_* p \domeq \lambda^p_* p\otimes \rho^p_*p,
\end{equation}
and analogously for the $q_i$. By \Cref{co:wort} and~\cite[Lemma~2.6]{hilsDominationMonoidHenselian2024}, if $q_0\doms q_1$ then $\rho^{q_0}_*{q_0}\doms \rho^{q_1}_*{q_1}$, and since $\otimes$ is compatible with $\doms$ in $\mathcal R$, we have $\rho^p_*p\otimes\rho^{q_0}_*{q_0}\doms \rho^p_*p\otimes \rho^{q_1}_*{q_1}$.

Since $\lambda^p_* p$ is a product of $1$-types, and similarly for the $q_i$, it follows from \Cref{lemma:1tp1at} and $q_0\doms q_1$ that $\lambda^p_* p\otimes \lambda^{q_0}_*q_0\doms\lambda^p_* p\otimes\lambda^{q_1}_* q_1$. By \Cref{fact-wortdomeq}(\ref{point:fwd7}) we have $\rho^p_*p\otimes \rho^{q_1}_*{q_1}\perp \lambda^p_* p\otimes\lambda^{q_1}_* q_1$, hence by orthogonality
\[
  \rho^p_*p\otimes\rho^{q_0}_*{q_0}\otimes \lambda^p_* p\otimes \lambda^{q_0}_*q_0\doms \rho^p_*p\otimes \rho^{q_1}_*{q_1}\otimes \lambda^p_* p\otimes\lambda^{q_1}_* q_1.
\]
Again by orthogonality, we may easily rearrange the terms in the products above and use \eqref{eq:taurholambda} to conclude that $\tau^p_* p\otimes \tau^{q_0}_* q_0\doms \tau^p_* p\otimes \tau^{q_1}_* q_1$, as promised.
\end{proof}
 
\begin{cor}\label{co:RpureRCF} Let $T$ be any theory which expands $\mathsf{DOAG}$ and is a reduct of $\mathsf{RCF}$. Then every invariant type is domination-equivalent to a product of $1$-types.
\end{cor}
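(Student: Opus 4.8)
The plan is to run through the trichotomy for o-minimal expansions of ordered groups and reduce everything to the semi-bounded non-linear case, where \Cref{lem:rfsbteorcf} (equivalently \Cref{thm:main1}) does the work. If $T$ is linear, it is a linear expansion of $\mathsf{DOAG}$ and \Cref{lineardec} applies directly. If $T$ is non-linear but not semi-bounded, then, being a reduct of $\mathsf{RCF}$, it coincides with $\mathsf{RCF}$, and the Main Conjecture for $\mathsf{RCF}$ is part of \cite{dominomin}. So the only case that needs a real argument is $T$ semi-bounded and non-linear.

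In that case, after naming finitely many constants --- harmless for domination --- there is a definable interval $R=(0,1)$ whose induced structure $\mathcal R$ expands a real closed field, exactly as in the set-up of \Cref{sec:semibdd}. The crucial point is that, since $T$ is a reduct of the \emph{pure} real closed field $\mathsf{RCF}$, \Cref{fact-Rind} applies and $\mathcal R$ is itself a pure real closed field; hence $\mathcal R(\monster)\models\mathsf{RCF}$, so by \cite{dominomin} every invariant type of $\mathcal R(\monster)$ is domination-equivalent, inside $\mathcal R(\monster)$, to a product of $1$-types of $\mathcal R(\monster)$.

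Now fix $p\in\invtypes(U)$ and apply \Cref{lem:rfsbteorcf}: we obtain pairwise weakly orthogonal invariant types $p_R\in\invtypes(R(\monster))$ and $p_0,\dots,p_k\in\invtypes_1(U)$, each of $\scl$-rank $1$, with $p\domeq p_R\otimes p_0\otimes\cdots\otimes p_k$. By the previous paragraph together with stable embeddedness of $\mathcal R$ (\Cref{fact:Rstabemb}, \Cref{fact-fullemb}), inside $\monster$ we still have $p_R\domeq q_1\otimes\cdots\otimes q_\ell$ with $q_1,\dots,q_\ell$ now $1$-types of $\monster$ concentrating on $R$. Each $q_j$ is a short type while each $p_i$ is $\scl$-independent, so \Cref{co:wort} gives $q_j\wort p_i$ for all $i,j$, and then \Cref{fact:wortotimes} (using distality of $T$) upgrades this to $q_1\otimes\cdots\otimes q_\ell\wort p_0\otimes\cdots\otimes p_k$, and likewise to $p_R\wort p_0\otimes\cdots\otimes p_k$. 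Feeding these into \Cref{fact-wortdomeq}(\ref{point:fwd6}) yields
\[
  p\domeq p_R\otimes p_0\otimes\cdots\otimes p_k\domeq q_1\otimes\cdots\otimes q_\ell\otimes p_0\otimes\cdots\otimes p_k,
\]
which is a product of $1$-types, as desired. (One could instead read this off \Cref{thm:main1}: since $\mathcal R(\monster)\models\mathsf{RCF}$ satisfies the Main Conjecture, \Cref{fact:redto1tps} identifies $\invtildeof{\mathcal R(\monster)}$ with a semilattice of finite sets of $1$-type classes, whence $\invtilde\cong\invtildeof{\mathcal R(\monster)}\times(\mathscr P_{<\omega}(\longpart(\monster)),\supseteq)$ is generated, as a monoid, by classes of $1$-types.)

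I do not expect a genuine obstacle: essentially all of the content is already contained in \Cref{lem:rfsbteorcf} and \Cref{thm:main1}. The one substantive extra input is \Cref{fact-Rind}, which guarantees that the induced field $\mathcal R$ carries no additional structure --- it is exactly this purity that lets us invoke the already-established $\mathsf{RCF}$ case for $\mathcal R(\monster)$. The remaining manipulations (transporting the $\mathcal R(\monster)$-side factorization into $\monster$ via full embeddedness and absorbing it past the long $1$-types using weak orthogonality) are routine and mirror computations already carried out in the proof of \Cref{thm:main1}.
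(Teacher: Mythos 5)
Your proposal is correct and follows essentially the same route as the paper: split by the trichotomy, handle the linear case via \Cref{lineardec} and the field case via \cite[Proposition~5.10]{dominomin}, and in the semi-bounded case use \Cref{fact-Rind} to see that $\mathcal R$ is a pure real closed field and then combine \Cref{lem:rfsbteorcf} with the $\mathsf{RCF}$ case. The extra details you supply (transporting the factorization of $p_R$ via stable embeddedness and absorbing it using weak orthogonality) are exactly what the paper leaves implicit in its one-line conclusion.
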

\begin{proof} 
If $T$ is linear, we already proved this in \Cref{lineardec}, and if $T=\mathsf{RCF}$ this is~\cite[Proposition~5.10]{dominomin}. The remaining case is that of a semi-bounded $T$. By \Cref{fact-Rind}, $\mathcal R$   is a real closed field with no extra structure. We then conclude by \Cref{lem:rfsbteorcf}  and a further use of~\cite[Proposition~5.10]{dominomin}. 
\end{proof}

\begin{thm}\label{thm:deftildedoag}
  If $T$ is an o-minimal expansion of $\mathsf{DOAG}$, then $\deftilde$ is a monoid, and isomorphic to 
  \begin{itemize}
  \item $(\mathscr P(\set{0,1}), \cup, \supseteq)$ if $T$ is semi-bounded;
  \item $(\mathscr P(\set{0}), \cup, \supseteq)$ otherwise.
  \end{itemize}
\end{thm}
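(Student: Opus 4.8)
The plan is to split along the trichotomy for o-minimal expansions of $\mathsf{DOAG}$ --- linear, semi-bounded non-linear, and expanding a real closed field --- where the first two fall under the ``semi-bounded'' clause of the statement and the last under ``otherwise''. The real closed field case is immediate: it is exactly \Cref{thm:deftildeexprcf}, which already yields that all non-realised definable types are domination-equivalent and that $\deftilde\cong(\mathscr P(\set{0}),\cup,\supseteq)$. So the content lies in the two semi-bounded cases, and for each I would argue in two stages. First, show that every definable type is domination-equivalent to a product of $1$-types, so that \Cref{fact:redto1tps}(\ref{point:reddeftps}) applies and gives that $\deftilde$ is a monoid isomorphic to $(\mathscr P_{<\omega}(Y(\monster)),\cup,\supseteq)$, where $Y(\monster)$ is the set of non-realised definable $1$-types modulo definable bijection. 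Second, compute $Y(\monster)$ and check that it has exactly two elements.

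For the decomposition stage, if $T$ is linear this is precisely \Cref{lineardec}, which decomposes every invariant --- hence in particular every definable --- type. If $T$ is semi-bounded and non-linear, then by \cite{pest-tri} there is a definable real closed field, so, after the normalisation of \Cref{sec:semibdd}, we are in the setting of \Cref{lem:rfsbteorcf}. Given a definable $p\in\deftypes(U)$, I would apply \Cref{lem:rfsbteorcf} to obtain $p\domeq p_R\otimes\bla p0\otimes k$, with $p_R=\rho^p_*p$ an invariant type of $\mathcal R(\monster)$ and each $p_i$ a $1$-type of $\scl$-rank $1$. Being a pushforward of the definable type $p$, the type $p_R$ is definable, and since $\mathcal R$ is an o-minimal structure expanding a real closed field, \Cref{cor:rcfdeftps1dec} applied inside $\mathcal R$ shows that $p_R$ is realised or domination-equivalent to the $1$-type $p_{0^+}$ of positive infinitesimals of $\mathcal R$, which via \Cref{fact-fullemb} we read as a $1$-type of $\monster$ concentrating on $R$. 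Hence $p$ is domination-equivalent to a product of $1$-types. Note that the factors so produced are automatically definable, being pushforwards or projections of $p$ and thus dominated by $p$ (\Cref{fact:domprdef}); so the argument never needs that a Morley product of definable types be definable. Alternatively, in the non-linear case one could restrict the isomorphism of \Cref{thm:main1} to definable classes, using \Cref{rem:onlylongdeftp} to see that the long part contributes only $\emptyset$ or $\set{\class{p_{+\infty}}}$ and \Cref{thm:deftildeexprcf} for the $\mathcal R$-factor.

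For the computation of $Y(\monster)$, I would recall that a non-realised definable $1$-type in an o-minimal structure is $p_{+\infty}$, $p_{-\infty}$, or the type of an element infinitesimally close to some $a\in\dcl(A)$, and that $x\mapsto-x$, $x\mapsto 2a-x$, and the translations $x\mapsto x-a$ are $\mathsf{DOAG}$-definable. It follows that $Y(\monster)$ has at most two classes: that of $p_{+\infty}$ and that of $p_{0^+}$, into which all infinitesimal-near-$a$ types collapse and which, in the non-linear case, is exactly the short generator produced above (since $p_{0^+}$ concentrates on $R$). These two classes are distinct because a semi-bounded structure admits no pole (\cite{ed1}), i.e.\ no definable bijection between a bounded and an unbounded interval; equivalently $\rk(p_{0^+})=0$ and $\rk(p_{+\infty})=1$, so the two types are weakly orthogonal by \Cref{co:wort} and hence, being non-realised, not domination-equivalent. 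Thus $\abs{Y(\monster)}=2$ and $\deftilde\cong(\mathscr P(\set{0,1}),\cup,\supseteq)$, which finishes the semi-bounded cases.

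Since every ingredient is already in place, I do not expect a deep obstacle; the care needed is organisational. The main point to watch is that the structural input \Cref{thm:main1}/\Cref{lem:rfsbteorcf} does \emph{not} apply when $T$ is linear --- there is no definable real closed field --- so the linear case genuinely requires \Cref{lineardec} separately and cannot be folded into the non-linear argument. The second delicate point is the identification of $Y(\monster)$: one must verify that in the semi-bounded non-linear case the real-closed-field part of the decomposition contributes precisely the single short generator $p_{0^+}$, and that $p_{0^+}$ and $p_{+\infty}$ are genuinely distinct up to definable bijection; it is exactly here, via the absence of a pole, that semi-boundedness --- as opposed to the full field case --- is used.
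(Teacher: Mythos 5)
Your proposal is correct and follows essentially the same route as the paper: the non-semi-bounded case is \Cref{thm:deftildeexprcf}, the semi-bounded non-linear case combines \Cref{lem:rfsbteorcf} with the real-closed-field computation and \Cref{fact:redto1tps}(\ref{point:reddeftps}), and the two generators of $Y(\monster)$ are separated by the absence of a pole. The one point where you genuinely diverge from the written proof is worth keeping: the paper's argument for the semi-bounded case invokes \Cref{thm:main1} and \Cref{lem:rfsbteorcf} uniformly, but those results presuppose a definable interval $R$ whose induced structure expands a real closed field, which fails exactly when $T$ is linear; your explicit detour through \Cref{lineardec} for the linear case is therefore not optional but necessary, and your identification of this as the delicate point is accurate. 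Two small cautions: \Cref{co:wort} and the $\scl$-rank argument for distinguishing $p_{0^+}$ from $p_{+\infty}$ are only available in the non-linear setting (where $\scl$ is defined relative to $R$), so in the linear case you must rely solely on your no-pole argument together with \Cref{fct:wort1tp}; and when feeding the decomposition into \Cref{fact:redto1tps}(\ref{point:reddeftps}) you correctly note that the $1$-type factors are definable via \Cref{fact:domprdef}, which is the detail that makes the definable-type version of the reduction go through.
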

\begin{proof}
If $T$ is not semi-bounded, then by \cite{ed1} it expands $\mathsf{RCF}$, and we already dealt with this case in \Cref{thm:deftildeexprcf}.
  
If $T$ is  semi-bounded, it follows from \Cref{thm:main1}, \Cref{fact:def1tpsrcf}, \Cref{rem:onlylongdeftp} and \Cref{co:wort} that there are precisely two non-realised $1$-types up to definable bijection: any non-realised short type (for instance, the type of an infinitesimal), and the type at $+\infty$. Moreover, the combination of \Cref{lem:rfsbteorcf} with \Cref{thm:deftildeexprcf} tells us that every definable type is domination-equivalent to a product of $1$-types. We conclude by \Cref{fact:redto1tps}.
\end{proof}

\end{document}